\documentclass[12pt]{amsart}
\usepackage[utf8]{inputenc}
\usepackage{graphicx}
\usepackage{color}
\usepackage{xcolor}
\usepackage{times}
\usepackage[hidelinks]{hyperref}
\usepackage{enumerate,latexsym}
\usepackage{latexsym}
\usepackage{amsmath,amssymb}
\usepackage{graphicx}

\usepackage{amsmath,amsthm,amsfonts,amssymb}
\usepackage{graphicx}
\usepackage{dsfont}

\makeatletter
\def\namedlabel#1#2{\begingroup
 #2%
 \def\@currentlabel{#2}%
 \phantomsection\label{#1}\endgroup
}
\makeatother

\theoremstyle{plain}
\newtheorem*{theorem*}{Theorem}
\newtheorem*{thmex*}{Theorem~\ref{example}}
\newtheorem*{thmasymp*}{Theorem~\ref{thmAsymp}}
\newtheorem{theorem}{Theorem}[section]

\newtheorem{corollary}[theorem]{Corollary}

\newtheorem{lemma}[theorem]{Lemma}

\newtheorem*{eg*}{Example}

\theoremstyle{definition}
\newtheorem{definition}[theorem]{Definition}

\definecolor{rrr}{rgb}{.9,0,.1}

\definecolor{rr}{rgb}{.8,0,.3}

\title{The Rest of the Tilings of the Sphere by Regular Polygons}
\author{Colin Adams, Cameron Edgar, Peter Hollander, and Liza Jacoby}
\date{January 2021}

\begin{document}
\begin{abstract}
    We determine all non-edge-to-edge tilings of the sphere by regular spherical polygons of three or more sides.
\end{abstract}

\maketitle

\section{Introduction}

The Platonic solids were known to the ancient Greeks. When they are projected out to a circumscribing sphere, they become the five regular tilings of the sphere, each comprised of a single type of regular spherical polygon called its prototile.

Other tilings of the sphere by regular spherical polygons of three or more sides glued edge-to-edge include the infinite families of the prisms and anti-prisms. As reported by Pappus of Alexandria \cite{Pappus}, Archimedes listed thirteen polyhedra with regular polygonal faces and symmetries identifying any pair of vertices, which when projected to the sphere, yield  additional edge-to-edge tilings of the sphere by regular spherical polygons.

In 1966, Johnson listed the 92 Johnson solids \cite{Johnson}, and conjectured this was a complete list of all the additional polyhedra with regular polygonal faces. This was proved  by Zalgaller \cite{Zalgaller} in 1967.  Of these, 25 are circumscribable by a sphere and can thus be projected out onto the surface of the sphere to obtain tilings by spherical regular polygons. This completed the classification of edge-to-edge tilings of the sphere by regular spherical polygons, with a total of 43 possibilities beside the two infinite classes corresponding to the prisms and anti-prisms.

However, the classification of tilings of the sphere by regular spherical polygons was yet incomplete, as non-edge-to-edge tilings were left unconsidered. In this paper, we complete the classification of tilings by regular polygons by determining exactly the non-edge-to-edge tilings of the sphere by regular polygons of three or more sides. This completes the work begun by Archimedes over two millennia ago and also provides some beautiful new families of tilings.

Polygonal tilings such that at least two tiles intersect in the interior of sides but do not overlap in the entirety of the interiors of both sides are said to be non-edge-to-edge. 
There are a great many non-edge-to-edge tilings of the plane. Even if we restrict to regular polygons, there has been no attempt to classify all of the possibilities (see Section 2.4 of \cite {GS2}). If we restrict to uniform tilings, so the symmetry group is transitive on vertices, there are eight families. But when not uniform, the possibilities are only known in special cases. (See \cite{Bolcskei} and \cite{Schattschneider} for instance).
    


As with the plane, we can also consider non-edge-to-edge tilings of the sphere. 
In this paper, we obtain a complete classification of all non-edge-to-edge tilings of the sphere by regular spherical $n$-gons for $n \geq 3$. Tilings with bigonal tiles will be considered in a future paper. The tilings fall into six classes which we now describe.

\medskip

Those in the first class are called the {\bf kaleidoscope tilings} and fall into five infinite families. In Figure \ref{tet-tet}, we see an example of the first infinite family, which is a tiling of the sphere by regular triangles with two different sizes of triangles represented, each appearing four times. We can shrink the side-length of the set of smaller triangles and expand the side-length of the set of larger triangles. In the limit, we obtain the tetrahedral tiling of the sphere. If we were to expand the side-length of the set of smaller triangles and shrink the others, the limit yields the octahedral tiling. So, as we expand the first set and shrink the second set, we pass through the octahedral edge-to-edge tiling with all triangles of the same size, and as we continue to expand the size of the first set and shrink the second, in the limit we again arrive at the tetrahedral tiling. We call these the triangle-triangle kaleidoscope tilings.


\begin{figure}[htpb]
\begin{center}
\includegraphics[width=.4\textwidth]{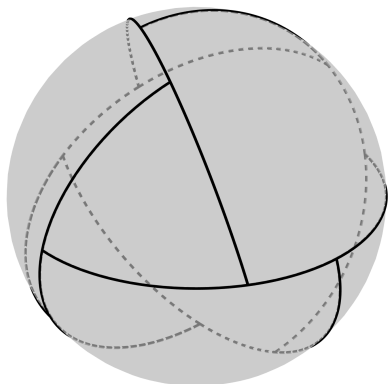}
\caption{A representative of the triangle-triangle continuum of kaleidoscope tilings.}
\label{tet-tet}
\end{center}
\end{figure}

In the case of Figure \ref{cubocto}, we see a tiling with two prototiles, one a square and the other a triangle, with the side-length of the square less than that of the triangle. This is our second family of kaleidoscope tilings, called the square-triangle kaleidoscope tilings.

\begin{figure}[htpb]
\begin{center}
\includegraphics[width=.4\textwidth]{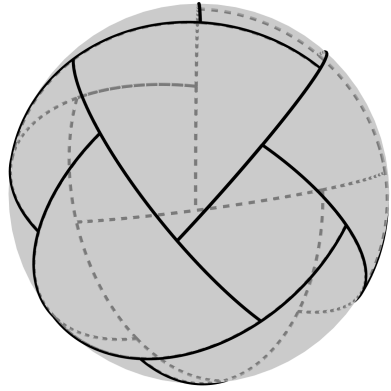}
\caption{A representative of the square-triangle continuum of kaleidoscope tilings.}
\label{cubocto}
\end{center}
\end{figure}

 If we shrink the size of the squares and expand the size of the triangles, we ultimately obtain an octahedron tiling in the limit. If instead we expand the squares and shrink the triangles, then when they have the same length edges, we obtain the cuboctahedron tiling. Then as we continue to expand the squares and shrink the triangles so that the squares have side-length greater than that of the triangles, we obtain our third family of kaleidoscope tilings, called the triangle-square kaleidoscope tilings, an example of which appears in Figure \ref{trisquaretiling}. If we continue to expand the side-length of the squares, in the limit, as the triangles disappear, we end at the cube tiling.

\begin{figure} [htpb]
    \centering
    \includegraphics[width = 0.4\textwidth]{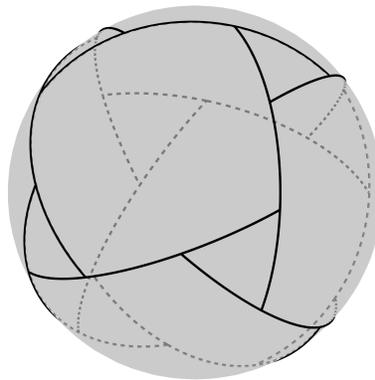}
    \caption{A representative of the triangle-square continuum of kaleidoscope tilings.}
    \label{trisquaretiling}
\end{figure}

In Figure \ref{dodicos}, we have a tiling with one prototile a pentagon and the other prototile a triangle, with the pentagon side-length less than that of the triangle. This is our fourth family, the pentagon-triangle kaleidsocope tilings. Shrinking the pentagons and expanding the triangles limits to the icosahedron tiling. Expanding the pentagons while shrinking the triangles until they have the same side-length yields the  icosidodecahedron tiling. 
\begin{figure}[htpb]
\begin{center}
\includegraphics[width=.4\textwidth]{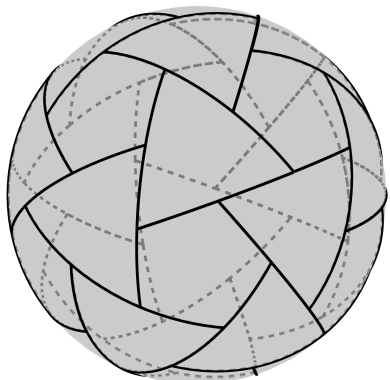}
\caption{A representative of the pentagon-triangle continuum of kaleidoscope tilings.}
\label{dodicos}
\end{center}
\end{figure}

Continuing to expand the pentagons while shrinking the triangles leads to our fifth family, the triangle-pentagon kaleidoscope tilings, as in Figure \ref{tripenttiling}. In the limit, we obtain the dodecahedron tiling.

\begin{figure} [htpb]
    \centering
    \includegraphics[width = 0.4\textwidth]{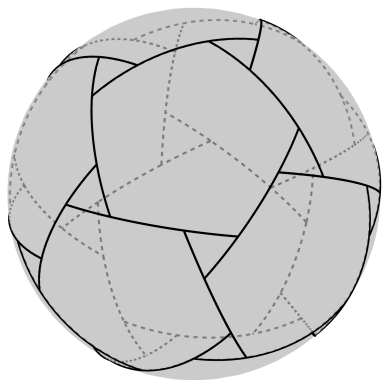}
    \caption{A representative of the triangle-pentagon continuum of kaleidoscope tilings.}
    \label{tripenttiling}
\end{figure}





The second class of tilings is called the {\bf 2-hemisphere tilings}. We arrive at these by considering the edge-to-edge tilings of $S^2$,  which correspond to the Platonic tilings, the prism and anti-prism tilings, the thirteen Archimedean tilings and the twenty-five Johnson tilings. In the case that there is a great circle contained in the union of the edges, we can cut along it to obtain edge-to-edge tilings of the corresponding hemispheres. Rotating along this equator, we can make the tilings of the hemispheres align in a non-edge-to-edge fashion. We can also take a hemisphere from each of two such distinct tilings to obtain non-edge-to-edge tilings of $S^2$, each subject to similar rotations, as shown in Figure \ref{2-hemispheres}. In fact, all of the edge-to-edge tilings of hemispheres come from the octahedron, the cuboctahedron or the icosidodecahedron, or they consist of a single tile. Thus by pairing the various options with themselves or other ones, we obtain ten families of these tilings, keeping in mind that we could consider a single-tile hemisphere to have any number of sides $n \geq 3$.

\begin{figure}[htpb]
\begin{center}
\includegraphics[width=.29\textwidth]{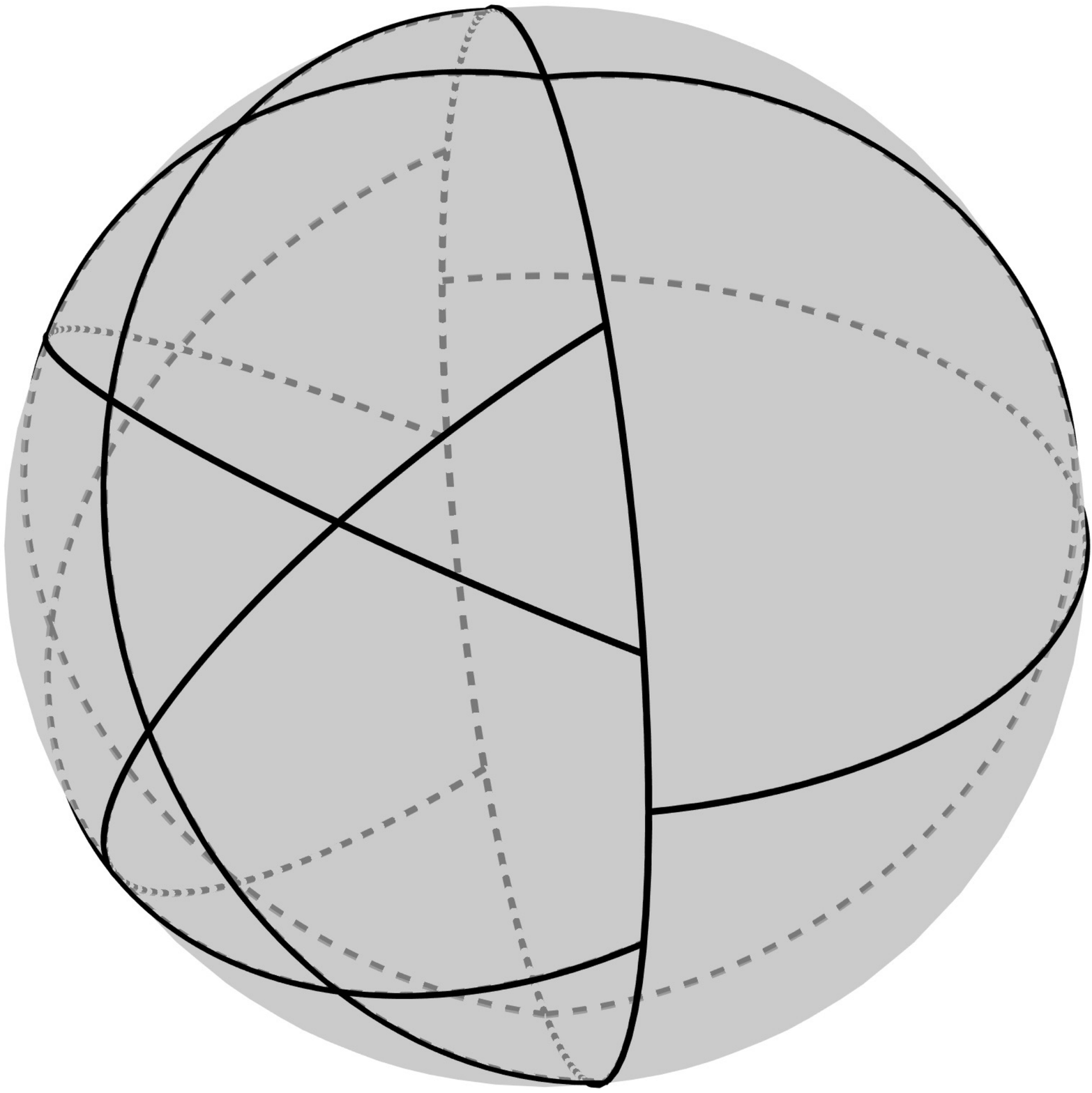}
\hspace{5mm}
\includegraphics[width=.29\textwidth]{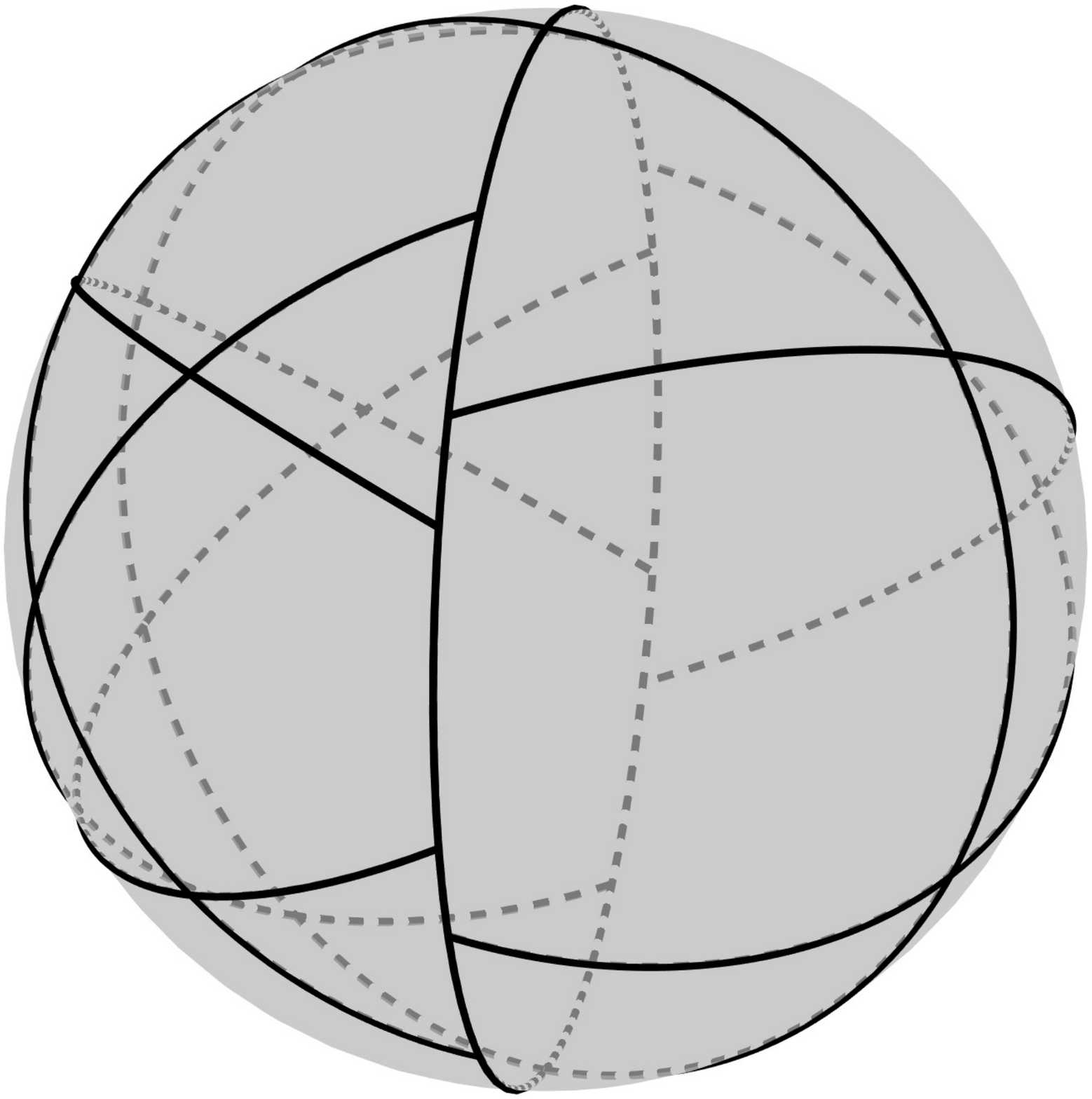}
\hspace{5mm}
\includegraphics[width=.29\textwidth]{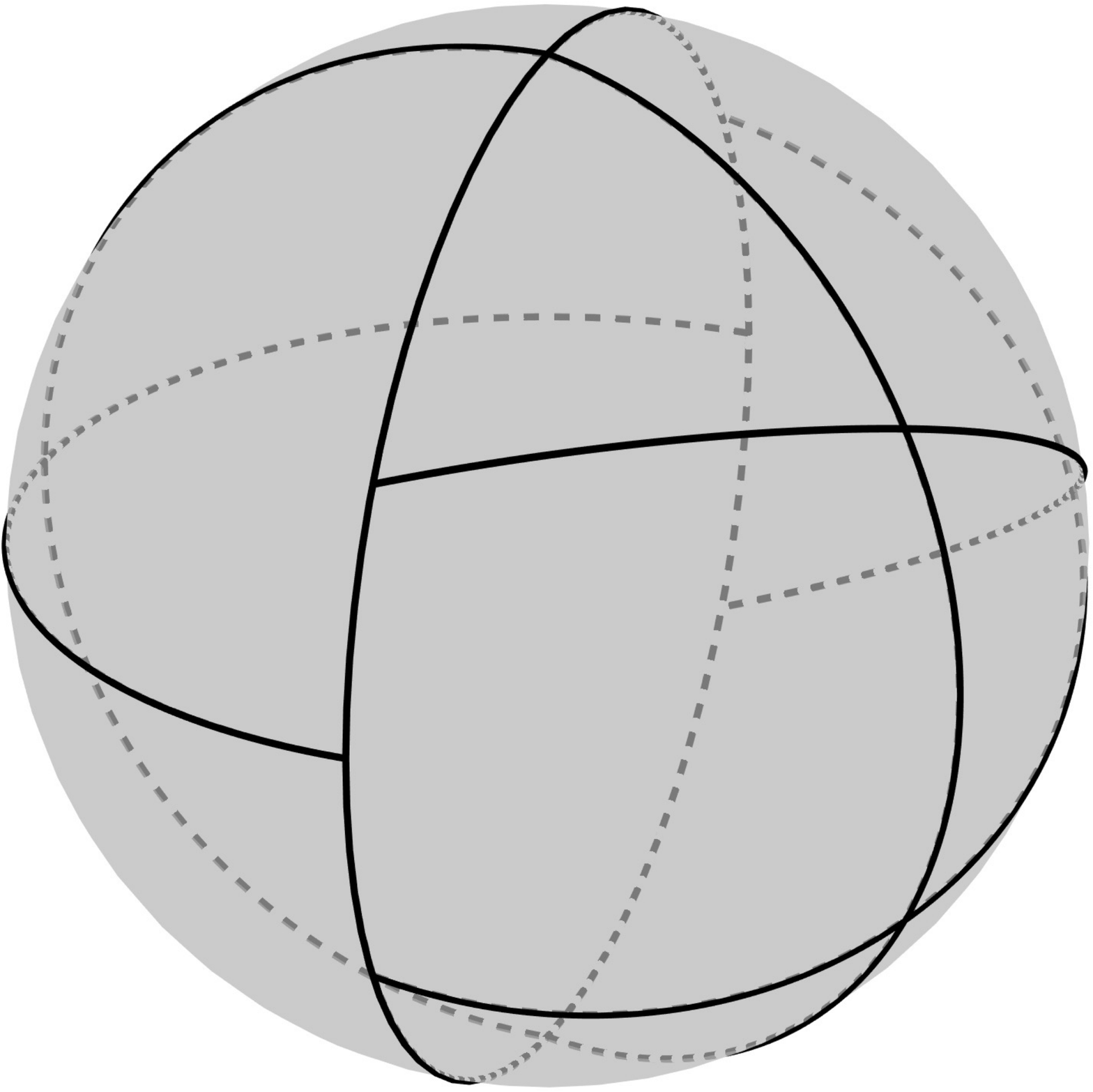}
\caption{Some examples of the 2-hemisphere tilings.}
\label{2-hemispheres}
\end{center}
\end{figure}

The third class of examples are the {\bf lunar tilings}.  From the Archimedean tilings, we can consider the edge-to-edge patches that designate lunes on $S^2$. There are five such lunes, as in Figure \ref{fig:bigontilings}. 

\begin{figure}[htpb]
\begin{center}
\includegraphics[scale=.5]{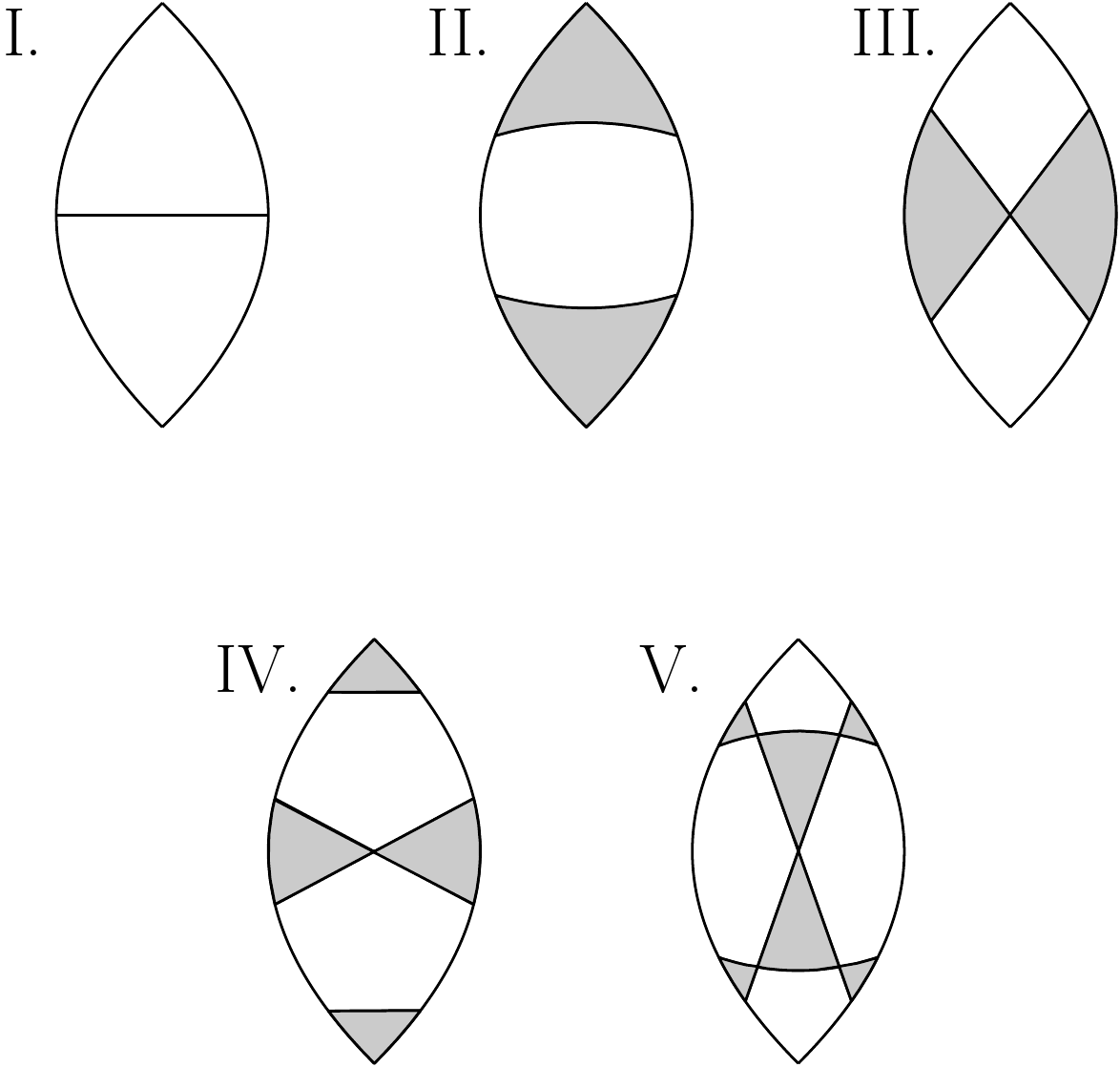}
\caption{Lunes coming from the dodecahedron, the cuboctahedron and the icosidodecahedron.}
\label{fig:bigontilings}
\end{center}
\end{figure}

The first bigon is called a Type I bigon and comes from two triangles in the octahedron tiling. Two of the bigons, denoted Type II and Type III bigons,  are from the cuboctahedron tiling. Two bigons, denoted Type IV and Type V, are from the icosadodecahedron tiling. 

In the case of Type II and Type IV bigons, we can glue a collection of copies of the same bigon type together in a non-edge-to-edge manner in a cycle that leaves space for a regular polygon at the top and bottom of the sphere, called the {\bf polar polygons}. In the Type II case, the polygon at the top can be a triangle or pentagon of fixed size. In the Type IV case, the polygon at the top and bottom can be either a triangle or square of fixed size.  This yields four possible lunar tilings, as in Figure \ref{lunartilings}. Note that unlike the preceding cases, these are not families, but rather are rigid.

\begin{figure}[htpb]
\begin{center}
\hspace{8mm} \includegraphics[width=.29\textwidth]{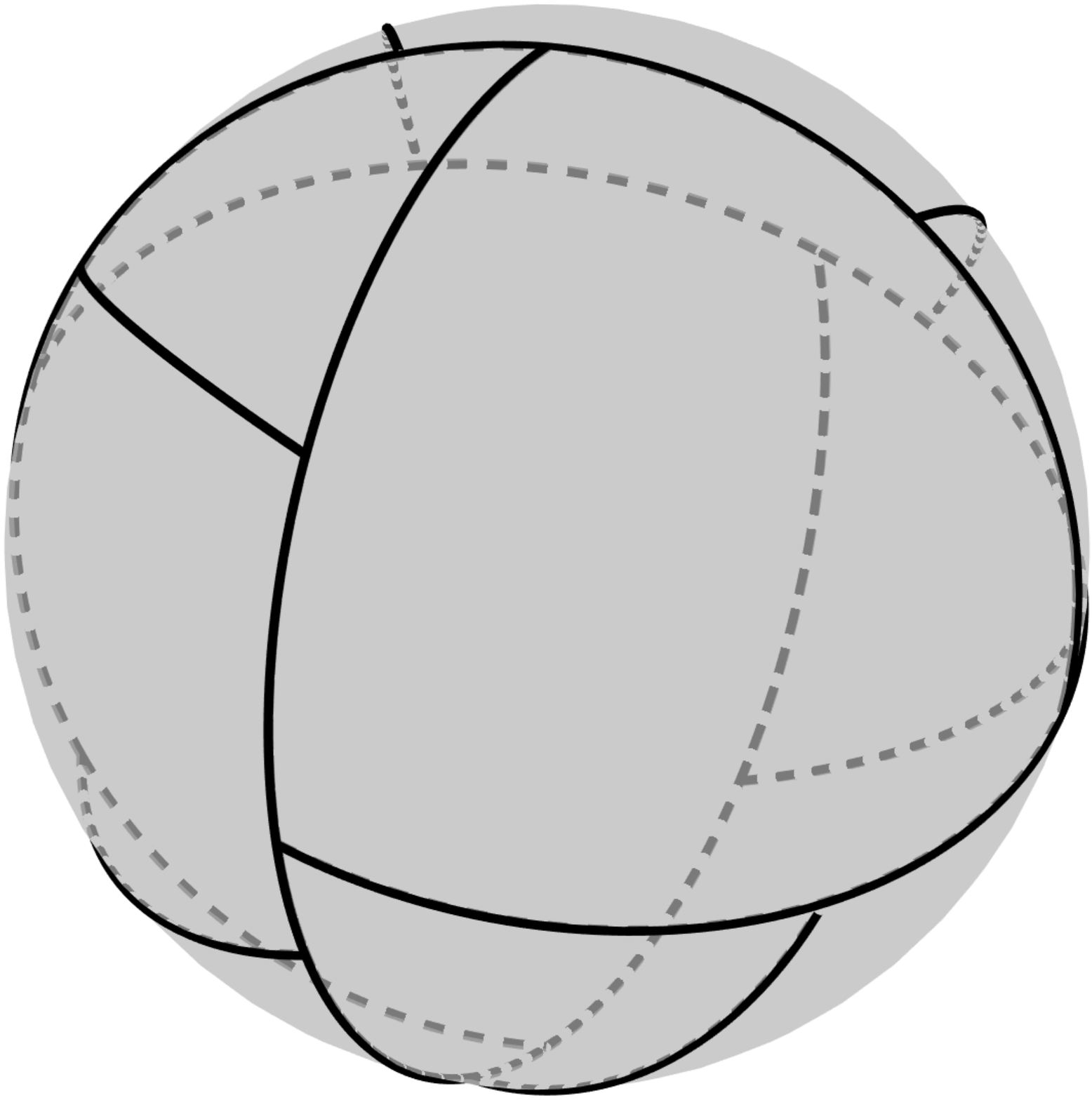}
\hspace{10mm}
\includegraphics[width=.29\textwidth]{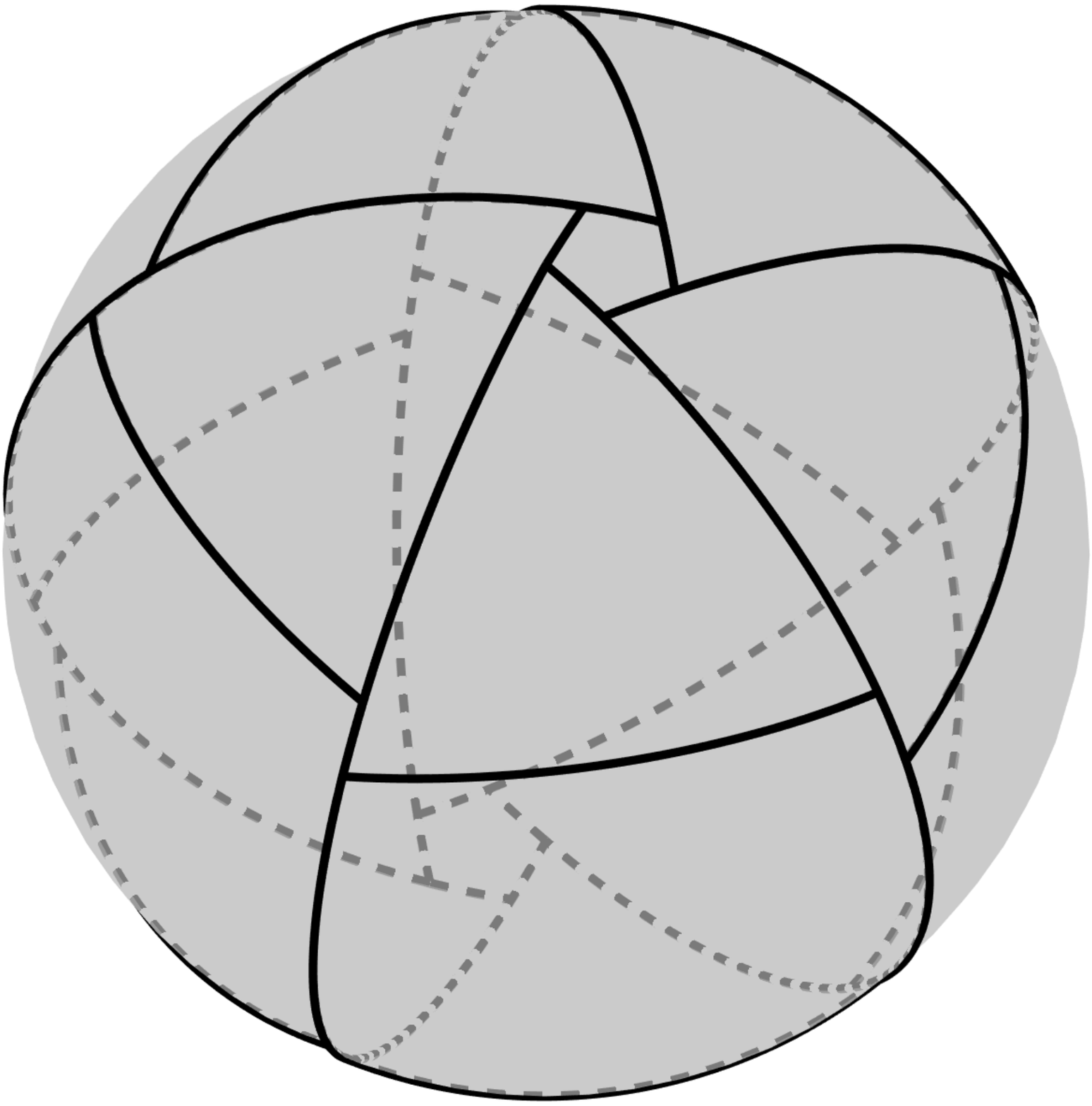} \vspace{7mm} \newline
\includegraphics[width=.29\textwidth]{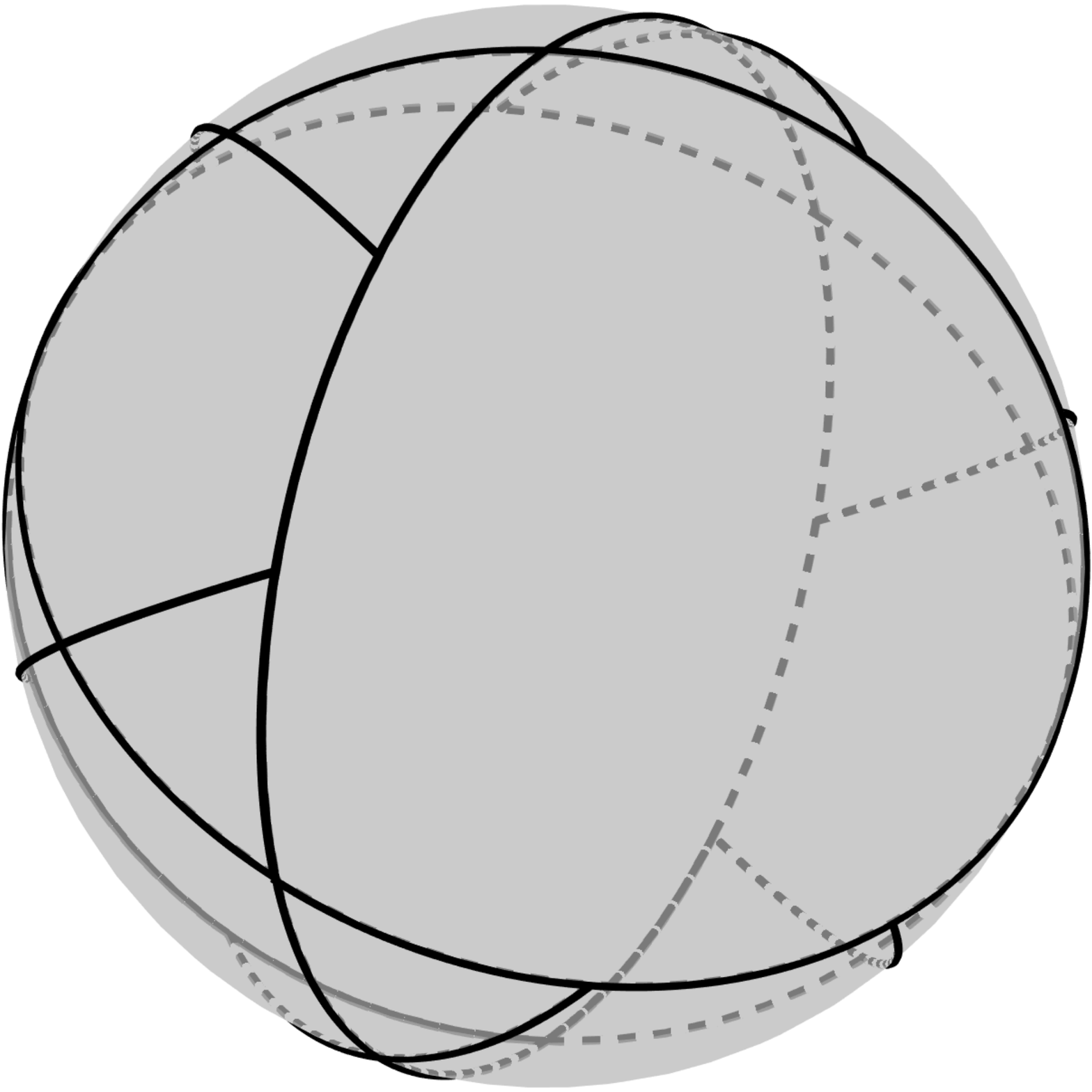} \hspace{10mm}
\includegraphics[width=.29\textwidth]{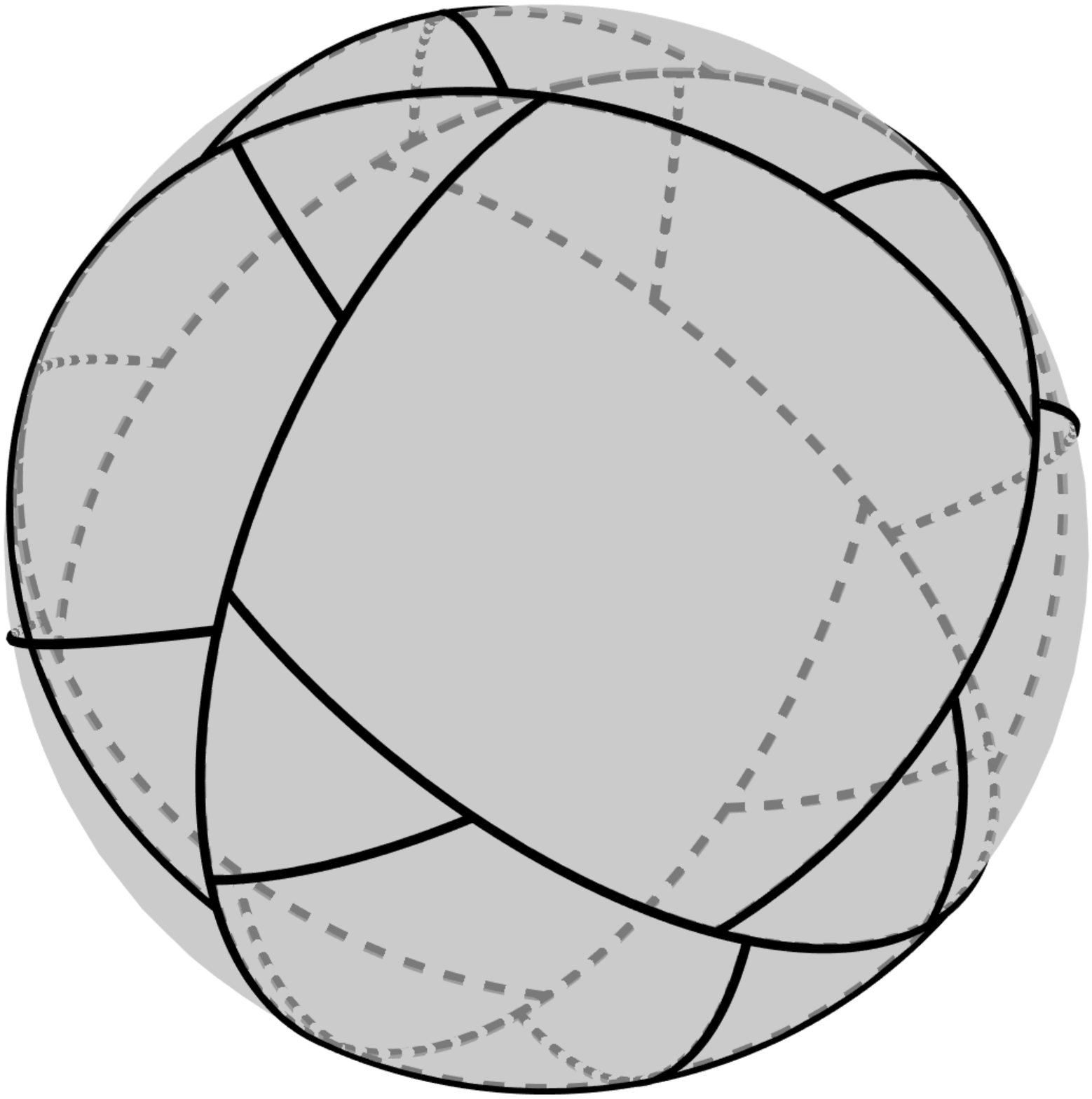}
\caption{The lunar tilings.}
\label{lunartilings}
\end{center}
\end{figure}

The fourth class consists of three rigid tilings that we call the {\bf sporadic tilings}. Each is obtained by gluing together edge-to-edge four lunes of the various types appearing as in Figure \ref{fig:bigontilings}, all sharing the north pole and south pole as vertices. We can glue these bigons together in the following cyclic orders to obtain non-edge-to-edge tilings of the sphere as in Figure \ref{sporadic}:

\begin{enumerate}
    \item I-II-I-III
    \item I-IV-I-V
    \item II-IV-III-V
\end{enumerate}

\begin{figure}[htpb]
\begin{center}
\includegraphics[width=.29
\textwidth]{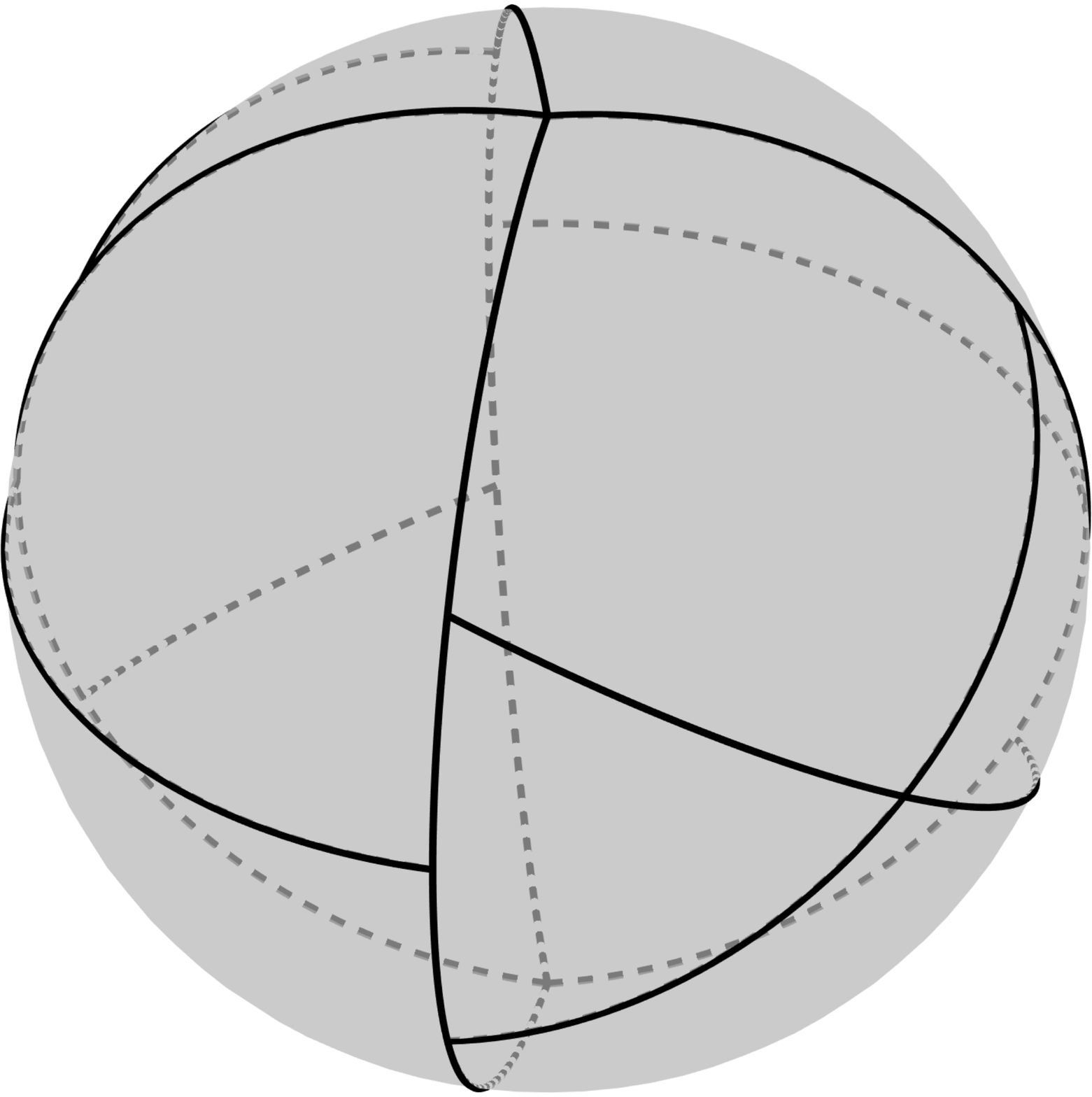} \hspace{5mm}
\includegraphics[width=.29
\textwidth]{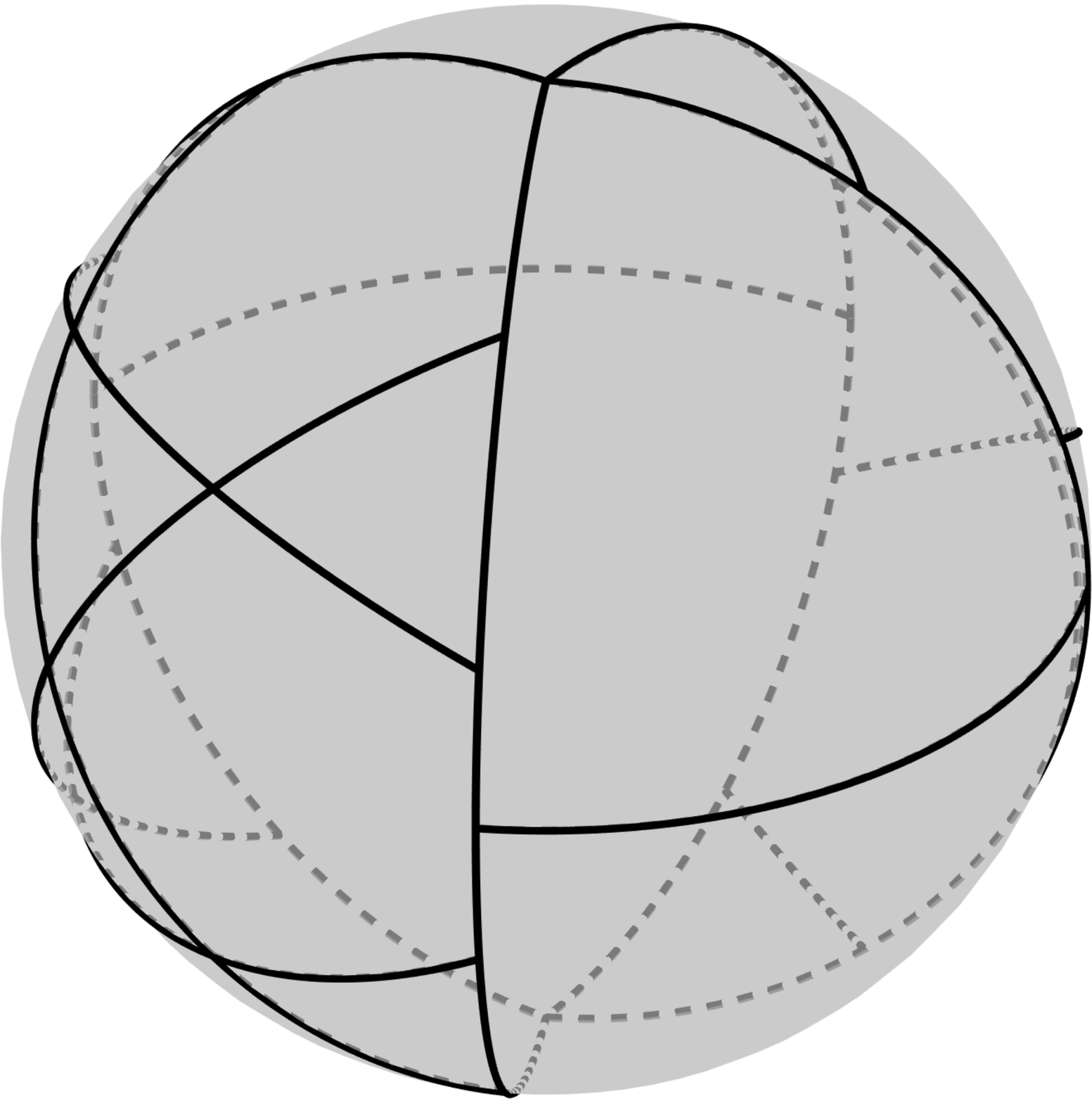} \hspace{5mm}
\includegraphics[width=.29
\textwidth]{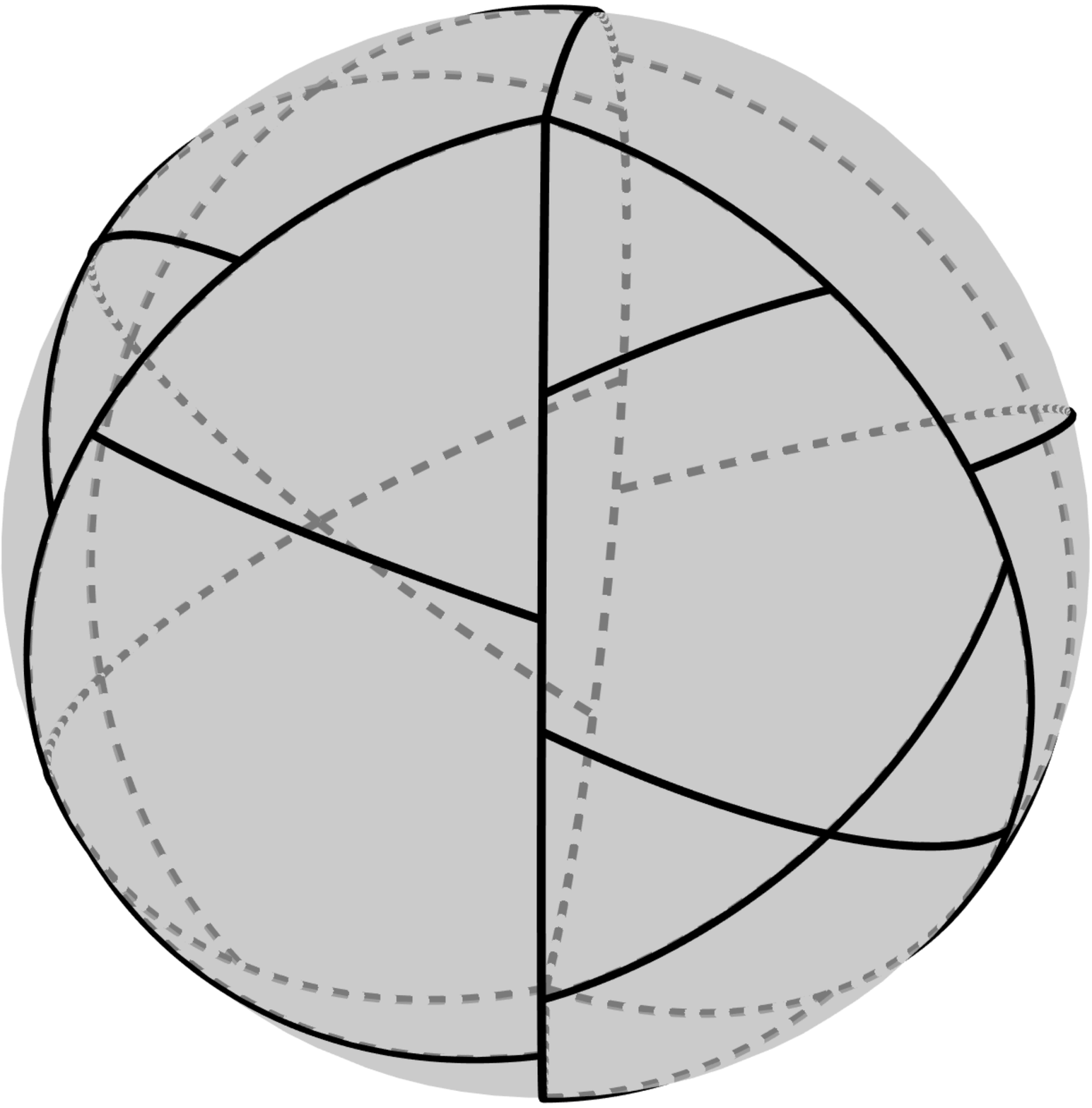}
\caption{The sporadic tilings.}
\label{sporadic}
\end{center}
\end{figure}

The fifth class of tilings are called {\bf composed tilings} and are obtained from the edge-to-edge tilings and the previously listed non-edge-to-edge tilings by composing sets of tiles that form a so-called {\bf magic triangle} as in Figure \ref{magictriangle}.  The resulting triangle has side-length exactly equal to $3\pi/5$.

\begin{figure}
    \centering
    \includegraphics[scale=1.5]{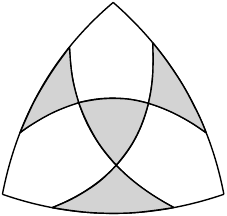}
    \caption{A magic triangle can be decomposed into smaller regular polygons.}
    \label{magictriangle}
\end{figure}

When such a collection of tiles is present in a tiling, we can compose the tiles into a single magic triangle and obtain a new non-edge-to-edge tiling. These sets of triangles exist in the icosidodecahedron, where there are up to four such collections of tiles that are disjoint on their interiors. We can choose to compose one of them.  Or we can compose two of them. The two magic triangles that result can either appear opposite one another or overlapping on their sides a length of $2 \pi /5$, or just touching at a vertex. The first of these last three options yields one of the lunar tilings.

     We can compose three magic triangles on the icosidodecahedron, either such that any two of them overlap a length of $2\pi/5$ on their boundaries, or such that two of them touch at just a vertex and the third, overlaps on its boundary with the boaundaries of the other two by $2\pi/5$. If we compose four, we get a Type I kaleidoscope tiling. So, this method applied to the icosidodecahedron generates five new non-edge-to-edge tilings which appear in Figure \ref{composedtilings}.

\begin{figure}
    \centering
    \includegraphics[width = .29\textwidth]{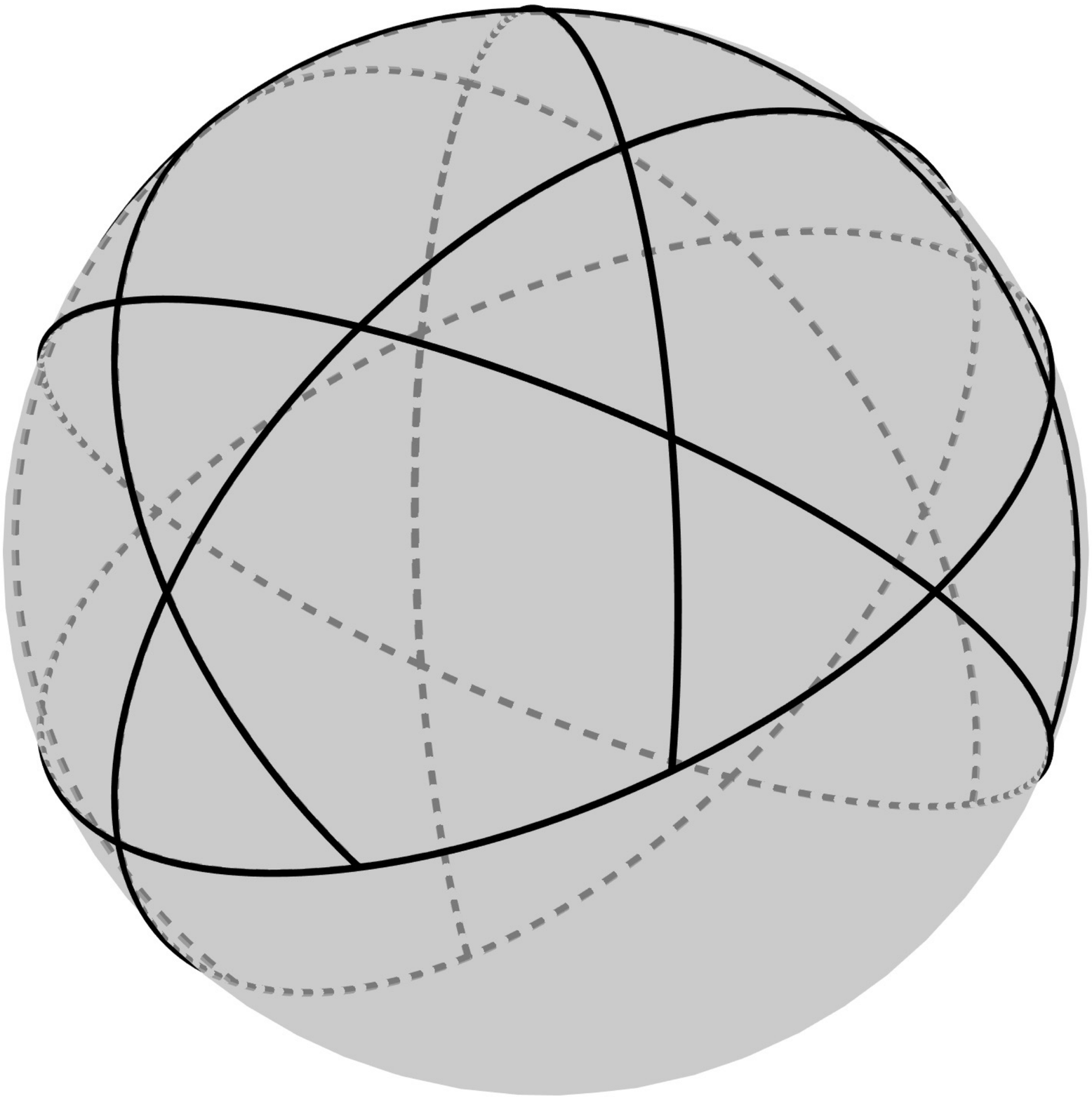}
    \hspace{5mm}
    \includegraphics[width = .29\textwidth]{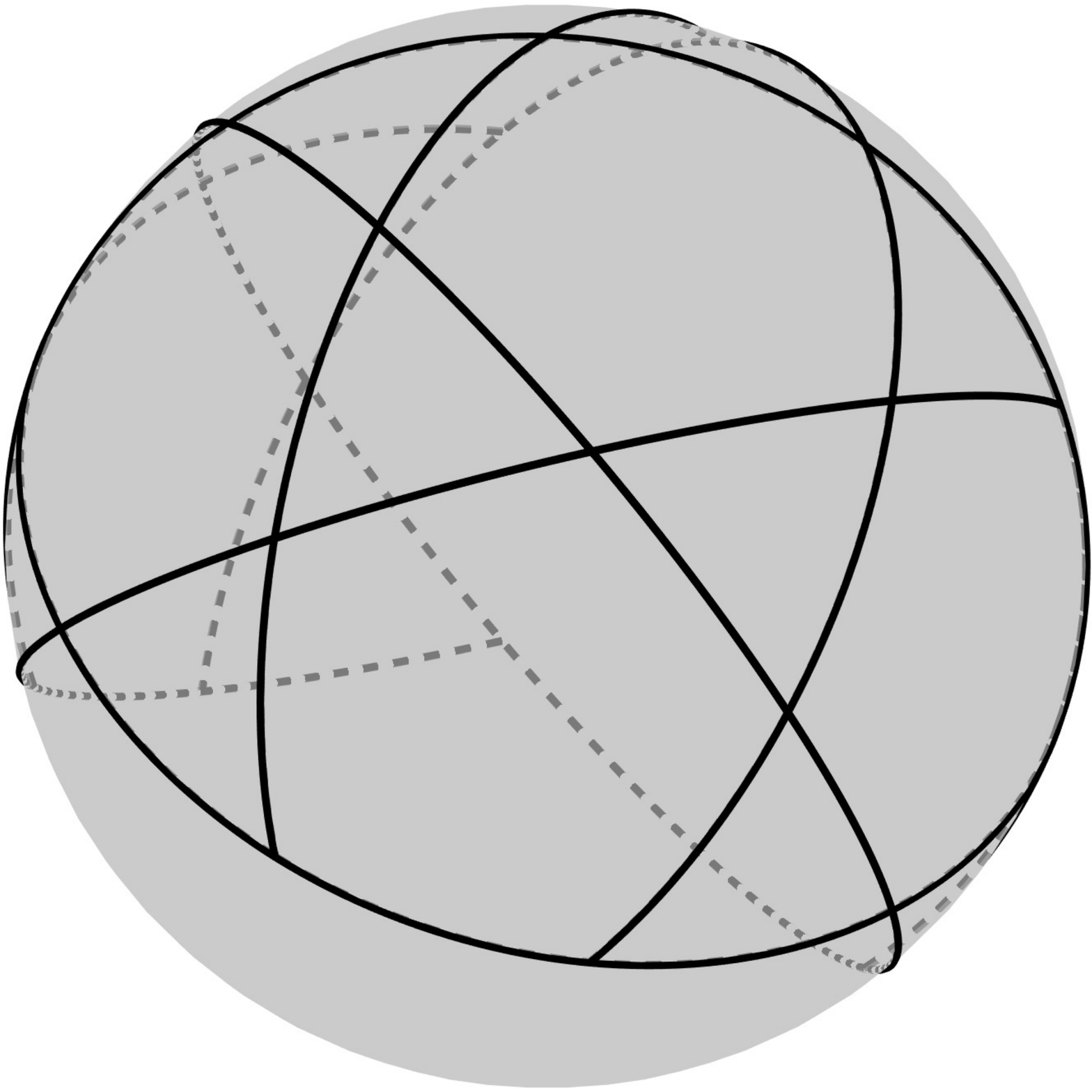}
    \hspace{5mm}
    \includegraphics[width = .29\textwidth]{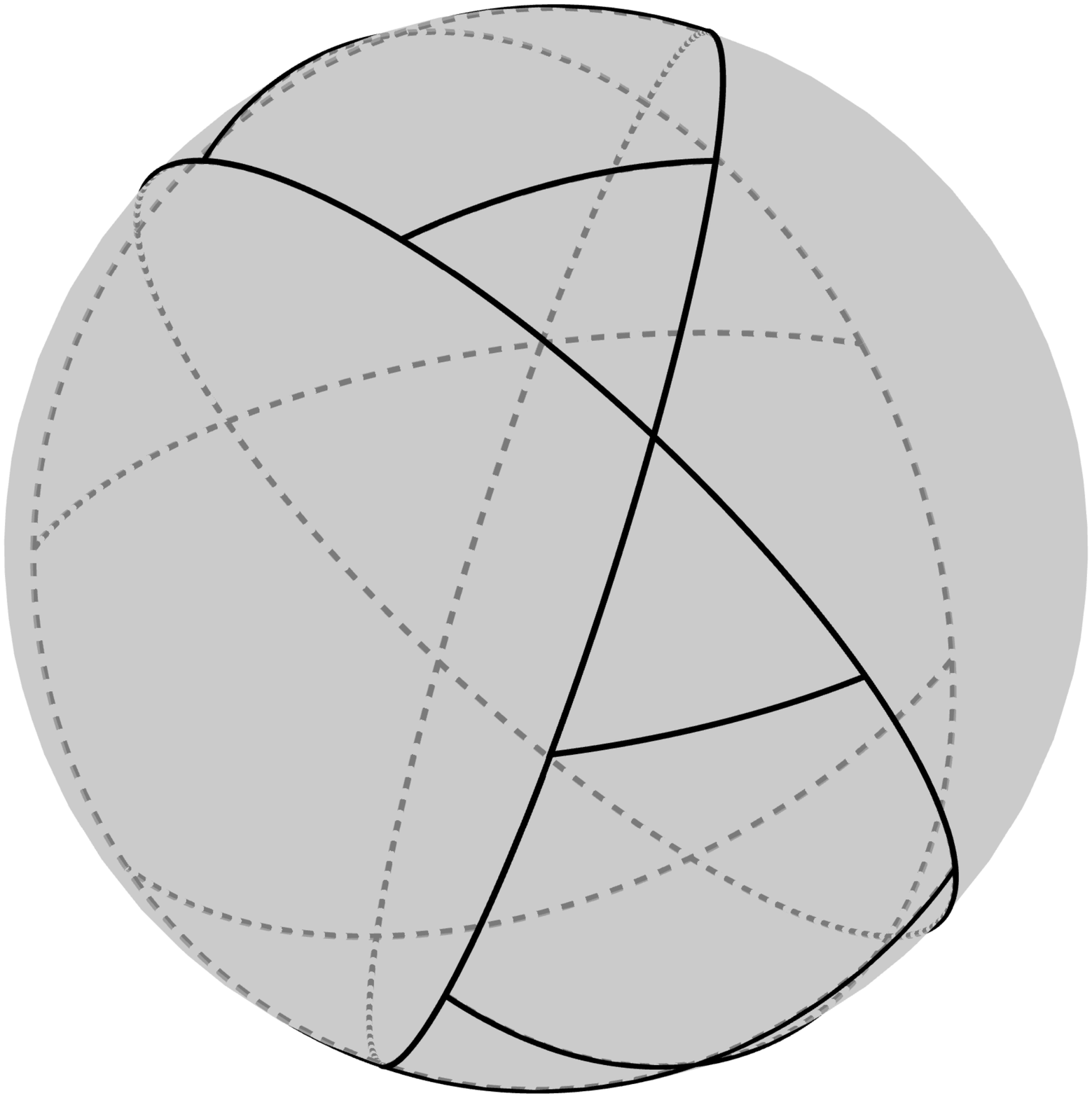}
    \vspace{7mm} \newline
    \includegraphics[width = .29\textwidth]{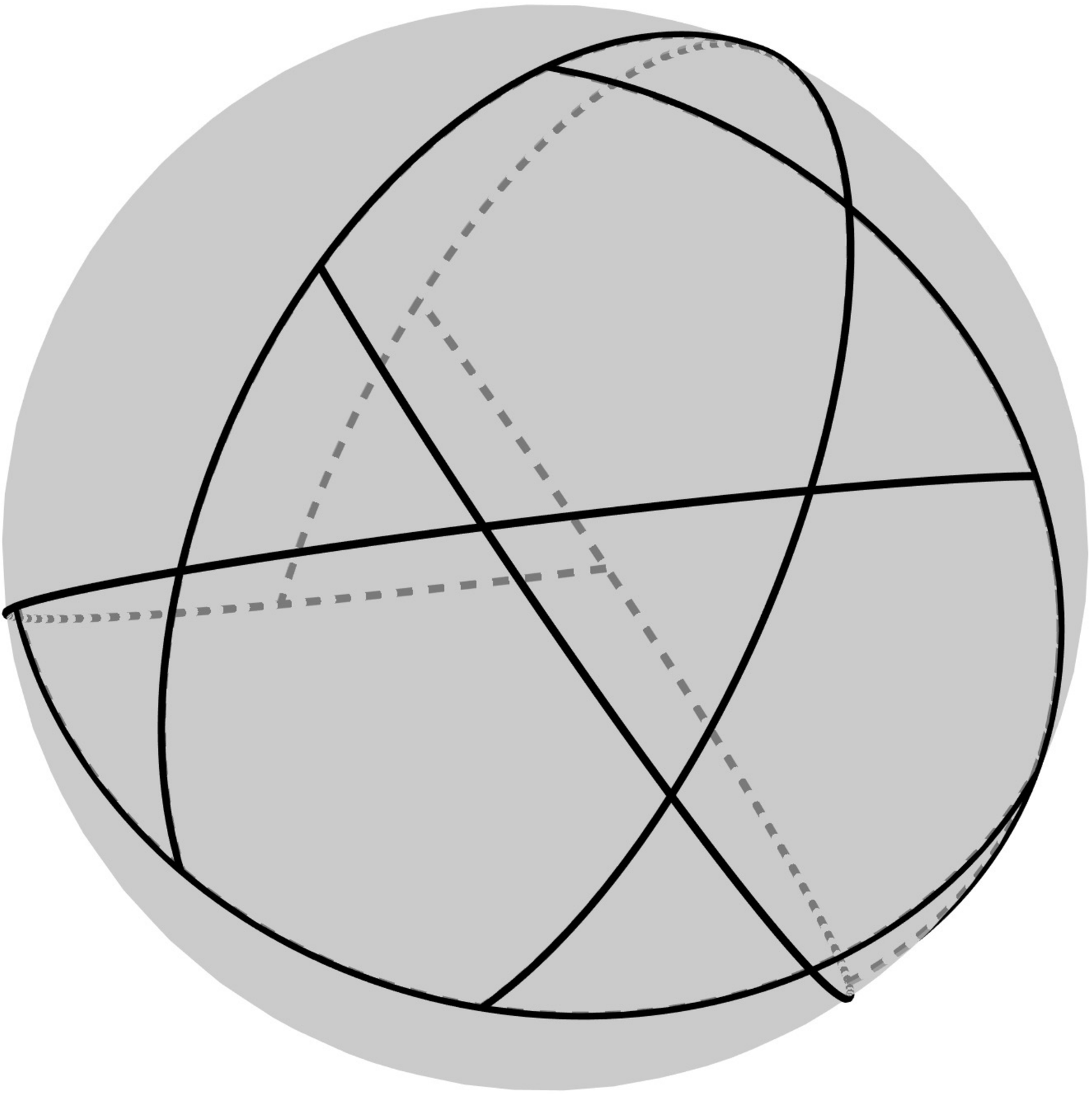}
    \hspace{5mm}
    \includegraphics[width = .29\textwidth]{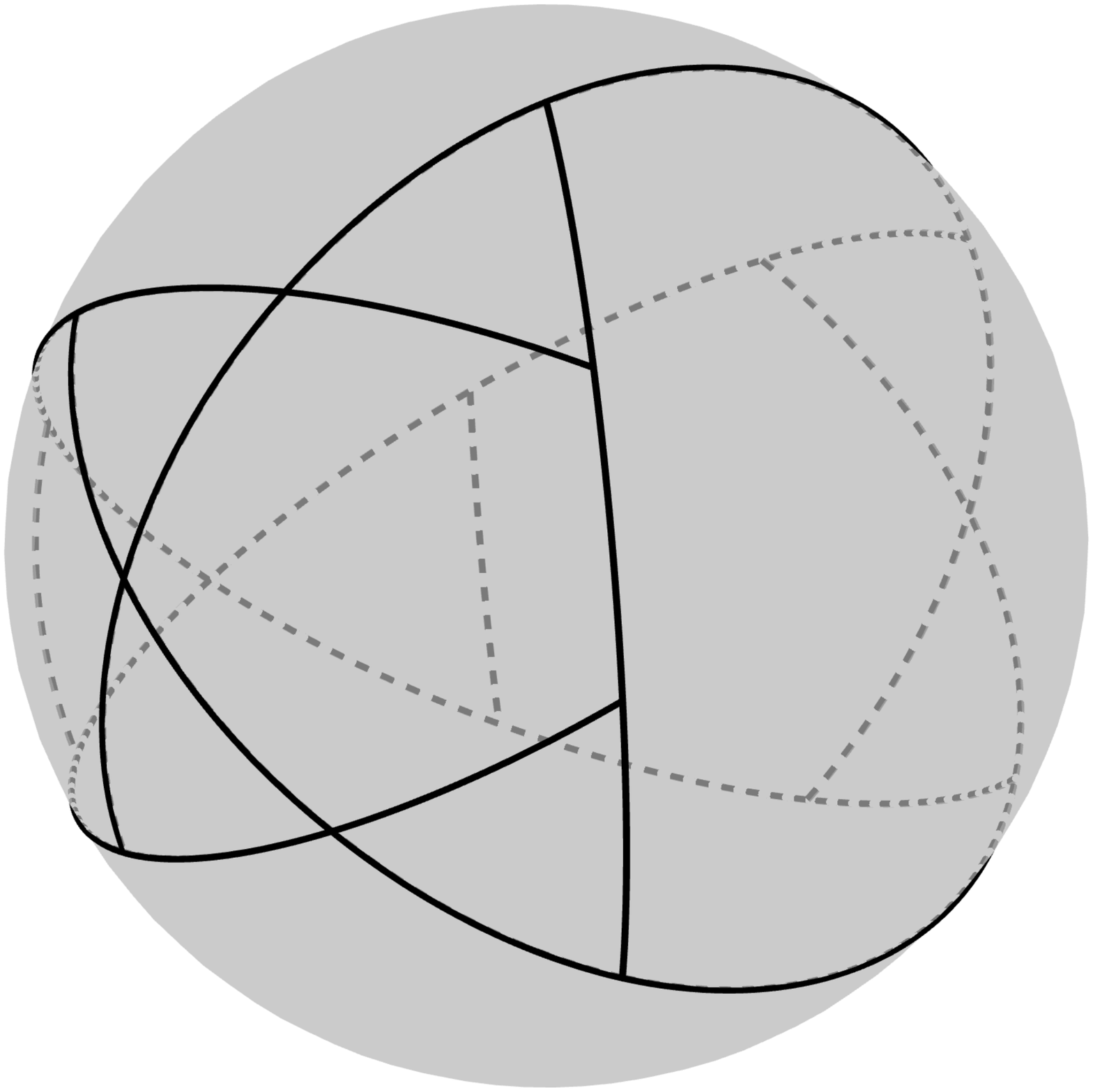}
    \caption{The five new composed tilings coming from the icosidodecahedron, one with one magic triangle, two with two magic triangles, and two with three magic triangles.}
    \label{composedtilings}
\end{figure}

In the case of the 2-hemisphere tilings, the hemisphere coming from the icosidodecahedron can have a single composed triangle, with one edge on its boundary. This generates one new hemispherical tiling, which can be paired with itself or any of the four previous hemispherical tilings.  Thus, we generate five additional composed tiling 2-hemisphere families. However, note that when we pair one copy of the new composed hemispherical tiling with an icosidodecahedral hemisphere tiling, the family contains the previously mentioned tiling obtained by composing one magic triangle in the icosidodecahedral tiling. When we take two copies of the new hemispherical tiling, the family contains all three of the tilings obtained by composing two magic triangles in the icosidodecahedral tiling, one of which was a lunar tiling. So the total count of tiling families only goes up by one.

If we choose a kaleidoscope tiling of Type I, with one set of triangles of side-length $3\pi/5$, then we can decompose a subset of these four triangles into pentagons and triangles to obtain non-edge-to-edge tilings. But these are the same as the ones we obtained by composing magic triangles in the icosidodecahedron. 
The other kaleidoscope tilings cannot be set to a size where the triangles are magic triangles, so they do not generate any such additional tilings.

Two of the sporadic tilings contain a lune that has a collection of tiles that can be composed into a magic triangle. So these are the last two composed tilings.

The final class consists of a single tiling that is a decomposed magic triangle together with the complementary tile on the sphere. This is the only non-edge-to-edge tiling that contains a polygon of angle greater than $\pi$. We call this the \textbf{magic triangle tiling} as in Figure \ref{magictriangletiling}.

\begin{figure}[htpb]
    \centering
    \includegraphics[width = .4\textwidth]{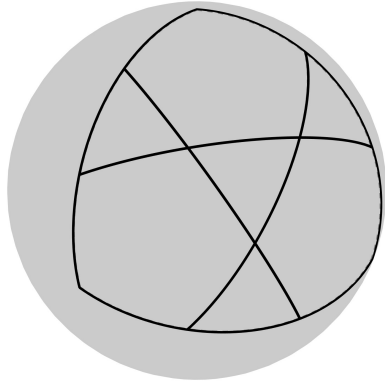}
    \caption{The magic triangle tiling.}
    \label{magictriangletiling}
\end{figure}

\begin{theorem}\label{maintheorem}

Every non-edge-to-edge tiling of the 2-sphere by regular $n$-gons with $n \geq 3$   is one of the following:
\begin{enumerate}
    \item A kaleidoscope tiling
    \item A 2-hemisphere tiling
    \item One of the four lunar tilings
    \item One of the three sporadic tilings
    \item One of the composed tilings
    \item The magic triangle tiling.
\end{enumerate}

\end{theorem}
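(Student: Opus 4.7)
\section*{Proof proposal}

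The plan is to reduce the global classification to a finite case analysis driven by the local combinatorics at \emph{T-vertices}: points at which a vertex of one tile lies in the interior of an edge of another tile. Because the tiling is non-edge-to-edge, at least one such T-vertex must exist. At a T-vertex $v$, the tiles incident to $v$ on the side of the subdividing straight edge contribute interior angles summing to exactly $\pi$, while the tiles on the other side together with the containing tile $T_1$ contribute another $\pi$. This gives a sharp angular equation that the interior angle $\alpha_n(\ell)$ of a regular spherical $n$-gon of side-length $\ell$ must satisfy.

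First I would assemble the elementary spherical-geometry toolkit: the interior angle $\alpha_n(\ell)$ of a regular spherical $n$-gon is continuous and strictly increasing in $\ell$ on $(0,\pi)$, ranging from the Euclidean value $(n-2)\pi/n$ up to $\pi$; full vertex figures sum to $2\pi$, straight-edge half-figures to $\pi$; and Gauss--Bonnet yields $\sum_i \mathrm{Area}(T_i) = 4\pi$ with $\mathrm{Area}(T)=\bigl(\sum\text{angles of }T\bigr)-(n-2)\pi$. Together these bound the admissible $n$ to a small list (for a non-edge-to-edge angular equation of the form $\sum_j \alpha_{n_j}(\ell_j)=\pi$ with $n_j\ge 3$, at most a handful of $n_j$-patterns can occur) and bound $k$, the number of tiles involved at a T-vertex.

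Next I would enumerate all admissible \emph{local germs} at a T-vertex: multisets $(n_1,\dots,n_k)$ together with the containing tile $T_1$ of type $n_0$, such that the relevant angle-sum equations have a consistent solution in the side-lengths, with the further compatibility that edges shared between adjacent tiles have equal length. Each admissible germ is already the seed of one of the six classes in the theorem. I would then propagate each germ outward: once the pair $(T_1,T_2)$ of tiles forming the T-vertex is fixed, the angle at the free vertex of $T_2$ on the straight edge forces the next tile, and so on. Using the constraint that side-lengths on each side of an already-placed edge must either match or produce further T-vertices satisfying the same germ equation, I would show that in every case the local pattern extends uniquely, up to the continuous parameter present exactly in the five kaleidoscope families and the rotational parameter in the 2-hemisphere families, to one of the tilings already described.

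The main obstacle will be the case explosion. A T-vertex can have two, three, four, or more tiles on its straight side; the containing tile $T_1$ can itself be of several types; and distinct T-vertices of different germs can coexist in one tiling. To keep this tractable I would partition by which unordered set of $n$-values occurs in the tiling. The cases with a single repeated $n$ and only two side-lengths handle the kaleidoscope families by an explicit one-parameter angle analysis. Detecting a great-circle arc in the edge set isolates the 2-hemisphere families, since every edge-to-edge hemispherical tiling by regular polygons is classically known. The lunar cases are characterised by the appearance of a meridional bigon patch of one of the five types in Figure~\ref{fig:bigontilings} repeated cyclically around the sphere. Rigid residual cases with four lunes sharing two antipodal vertices give the sporadic tilings; the presence of a magic-triangle subcomplex (side-length $3\pi/5$) in a preceding tiling yields the composed tilings; and the lone configuration of a decomposed magic triangle paired with its complementary tile yields the magic triangle tiling. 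The Gauss--Bonnet area identity and the vertex-angle equations, applied after each germ is propagated a few steps, will quickly eliminate all germs that cannot close up to a global tiling, so that only the listed six classes survive.
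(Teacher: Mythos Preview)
Your strategy of enumerating local germs at T-vertices and propagating outward is a natural first instinct, but it differs substantially from the paper's approach and, as written, has a real gap: you identify the case explosion as the main obstacle but do not supply a mechanism that actually tames it. Partitioning by the unordered set of $n$-values present in the tiling does not by itself bound the number of side-lengths, the number of germ types that can coexist, or the depth of propagation needed before Gauss--Bonnet bites. In practice the same $(n_0;n_1,\dots,n_k)$ germ can seed several inequivalent global tilings (for instance, the tri-pent germ underlies a kaleidoscope family, a lunar tiling, two sporadic tilings, several composed tilings, and the magic triangle tiling), so ``each admissible germ is already the seed of one of the six classes'' is not a usable organizing principle.

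The paper controls the explosion with two structural moves you do not have. First, it focuses on the tiles of \emph{minimal side-length} and asks whether they occur only as singletons or in a nontrivial edge-to-edge patch; this dichotomy immediately splits off the kaleidoscope families and one lunar tiling (Section~\ref{singletonsection}). Second, before analyzing the patch case it \emph{decomposes every magic triangle} into its seven constituent tri-pent tiles; with that normalization in place it proves the key structural lemma (Lemma~\ref{minimaledgepatch}) that any maximal edge-to-edge patch of minimal-length tiles is a bigon. The classification then reduces to a short list of bigon types (Lemma~\ref{bigonoptions}) and how they can be assembled, yielding the remaining lunar, sporadic, and 2-hemisphere tilings. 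The composed tilings and the magic triangle tiling are recovered at the end by re-composing magic triangles. Without the minimal-tile focus and the decompose-first trick, your germ propagation has no analogue of the bigon lemma, and it is exactly that lemma which converts an a priori unbounded local-to-global extension problem into a finite one.
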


This is a list of 31 distinct possibilities, each a family or rigid tiling, with certain tilings having been absorbed into families.
In this list, we have not distinguished between enantiomorphic pairs. That is, we do not distinguish between a tiling and its reflection. Some are equivalent to their reflection but some are not.

This paper is organized as follows. In Section 2, we outline definitions and proofs requisite for constructing non-edge-to-edge tilings of $S^2$. 
In Section 3, we consider tilings in which the smallest side-length tiles only appear as singletons, meaning they do not meet any other tile edge-to-edge. The resulting tilings are the kaleidoscope tilings and one of the lunar tilings. In Section 4, we consider tilings in which the smallest side-length tiles appear in edge-to-edge patches that consist of more than one tile. At this point, we assume all magic triangles have been decomposed. We then show that those patches are bigons, which include hemispheres, and show that the possible tilings are three of the lunar tilings, the 2-hemisphere tilings and the sporadic tilings. 

In Section 5, we consider the additional tilings we obtain by re-composing any possible magic triangles. Further, we determine that there is only one non-edge-to-edge tiling with a polygon of angle greater than $\pi$. We sum up in Section 6, pulling together the pieces to prove the main theorem.

In the appendix, we  classify all tilings of a regular polygon of angle less than $\pi$ by regular polygons. This is useful at various points throughout the paper, and may be of independent interest.

Thanks to George Hart, Casey Mann, Doris Schattschneider, and Jeff Weeks for helpful information and feedback.

\section{Background Tools and Definitions}

In this section we define relevant terms and prove necessary conditions on the vertices in non-edge-to-edge tilings of the sphere.

We need to be careful to distinguish between the vertices of a tile and the vertices of a tiling. Hence for a polygonal tile, we define its \textbf{sides} to be the geodesic segments which combine to create the boundary of the tile and its \textbf{corners} to be the points where the sides meet. 

On the other hand, a \textbf{vertex} of a tiling is a point where three or more tiles intersect. An {\bf edge} is the intersection of two tiles along a side of each and is bounded by vertices. 

If all tiles incident to a vertex have a corner there, then we refer to the vertex as a \textbf{full vertex}. Alternatively, if a vertex lies in the interior of the side of a polygonal tile, we call it a \textbf{half vertex}. We refer to the tiles whose corners meet at a half vertex as \textbf{supplementary tiles}, since their angles must be supplementary, and we refer to the tile which contains a half vertex on the interior of a side as an \textbf{edge-vertex tile}. Note that there can be at most two non-edge-vertex tiles at a half-vertex, assuming no bigon tiles, since the angle of a spherical regular $n$-gon for $n \geq 3$ is always greater than $\pi/3$. An example of supplementary tiles meeting at a half vertex appears in Figure \ref{halfvertex}.

\begin{figure}[htpb]
\begin{center}
\includegraphics[width=.3\textwidth]{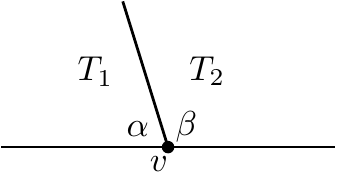}
\caption{Tiles $T_1$ and $T_2$ meeting at half vertex $v$. Note that the angles $\alpha$ and $\beta$ must add to $\pi$.}
\label{halfvertex}
\end{center}
\end{figure}

\begin{definition}
Let $T$ be a regular spherical polygon. Then the \textbf{side-length} of $T$, denoted $|T|$, is the length of each of the geodesics which make up the sides of $T$. The \textbf{area} of $T$ is denoted $a(T)$. The angle of $T$ is denoted $\angle(T)$.
\end{definition}

\begin{lemma}\label{sameanglepolygons} If $T_m$ and $T_n$ are regular spherical polygons with $m$ and $n$ edges, $m > n$ and their angles are identical, then $|T_m| < |T_n|$.
\end{lemma}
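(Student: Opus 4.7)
My plan is to derive a single closed-form identity relating the side length $|T|$, the interior angle $\angle(T)$, and the number of sides $n$ of a regular spherical $n$-gon, and then read the inequality off by monotonicity.

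First I would form the right spherical triangle with vertices $O$ (the center of $T$), $V$ (a corner of $T$), and $M$ (the midpoint of a side incident to $V$). Regularity forces $OM$ to meet the side through $M$ perpendicularly, so the angle at $M$ is $\pi/2$; the angle at $O$ is half of the central angle $2\pi/n$, namely $\pi/n$; and the angle at $V$ is half of the interior angle, namely $\angle(T)/2$. The leg from $V$ to $M$ has length $|T|/2$. Applying the Napier rule $\cos A = \cos a \sin B$ for a right spherical triangle, with $A$ the angle at $O$, $a$ the side opposite $A$ (i.e., $VM$), and $B$ the angle at $V$, produces the identity
$$\cos(\pi/n) = \cos(|T|/2)\,\sin(\angle(T)/2).$$

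From this identity the lemma follows immediately. Since $T_m$ and $T_n$ share the common angle $\alpha$, the formula gives
$$\cos(|T_m|/2)\,\sin(\alpha/2)=\cos(\pi/m),\qquad \cos(|T_n|/2)\,\sin(\alpha/2)=\cos(\pi/n).$$
Because $3\le n<m$, we have $0<\pi/m<\pi/n\le \pi/3<\pi/2$, and cosine is strictly decreasing on $(0,\pi/2)$, so $\cos(\pi/m)>\cos(\pi/n)$. Dividing by the positive constant $\sin(\alpha/2)$ yields $\cos(|T_m|/2)>\cos(|T_n|/2)$. Both half-side-lengths lie in $(0,\pi/2)$ because a regular spherical polygon with interior angle less than $\pi$ has sides shorter than $\pi$, so a final appeal to the strict decrease of cosine on $(0,\pi/2)$ gives $|T_m|<|T_n|$.

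There is no real obstacle here; the derivation is an essentially one-step application of spherical trigonometry. The only care required is in confirming that $\pi/n$, $\pi/m$, $|T_m|/2$, and $|T_n|/2$ all lie in the range $(0,\pi/2)$ on which sine and cosine are monotonic, which is automatic for $n\ge 3$ and for regular spherical $n$-gons whose interior angle is less than $\pi$.
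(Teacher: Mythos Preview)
Your proof is correct and follows essentially the same strategy as the paper: decompose the regular $n$-gon into congruent triangles meeting at the center, extract a spherical-trigonometric identity relating $|T|$, $\alpha$, and $n$, and then read off the inequality by monotonicity in $n$. The only cosmetic difference is that you bisect the isoceles central triangle into a right triangle and apply Napier's rule to get $\cos(\pi/n)=\cos(|T|/2)\sin(\alpha/2)$, whereas the paper works with the full isoceles triangle and the spherical law of cosines to get $|T_n|=\cos^{-1}\bigl((\cos(2\pi/n)+\cos^2(\alpha/2))/\sin^2(\alpha/2)\bigr)$; the two identities are equivalent via the double-angle formulas.
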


\begin{proof} If we cut each polygon $T_n$ of angle $\alpha$ into $n$ triangles, all sharing the central point, then the side length of $T_n$ satisfies $|T_n|= \cos^{-1}(\frac{\cos(2\pi/n) + \cos^2(\alpha/2)}{\sin^2(\alpha/2)})$. Since $\cos(2\pi/n)$ is increasing with $n$, and $\cos^{-1}$ is a decreasing function, then for fixed angle $\alpha$, $|T_n|$ decreases as $n$ increases.
\end{proof}


\begin{corollary}\label{samelengthpolygons} If $T_m$ and $T_n$ are regular spherical polygons with $m$ and $n$ sides, $m > n$, angles no more than $\pi$, and their side-lengths identical, then $\angle(T_m) > \angle(T_n)$.
\end{corollary}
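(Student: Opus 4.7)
The plan is to combine Lemma~\ref{sameanglepolygons} with a monotonicity property: for a regular spherical $n$-gon of fixed $n$, the side-length is a strictly increasing function of the angle on the relevant range. Granted this, the corollary is essentially the contrapositive of the lemma.

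First, I would argue by contradiction. Suppose instead that $\angle(T_m) \leq \angle(T_n)$. Introduce an auxiliary regular spherical polygon $T_n'$ with $n$ sides and angle exactly equal to $\angle(T_m)$ (this is realizable since $\angle(T_m) > (m-2)\pi/m > (n-2)\pi/n$ and $\angle(T_m) \leq \pi$). By Lemma~\ref{sameanglepolygons} applied to $T_m$ and $T_n'$, which have equal angles and $m > n$, we get
\[
|T_m| < |T_n'|.
\]

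Second, I would establish strict monotonicity of $|T_n|$ in $\alpha = \angle(T_n)$ for fixed $n$. Using the formula from the proof of Lemma~\ref{sameanglepolygons},
\[
|T_n| = \cos^{-1}\!\left(\frac{\cos(2\pi/n) + \cos^2(\alpha/2)}{\sin^2(\alpha/2)}\right),
\]
a direct differentiation shows that the argument of $\cos^{-1}$ has derivative $-\cos(\alpha/2)(1+\cos(2\pi/n))/\sin^3(\alpha/2)$, which is strictly negative on $0 < \alpha < \pi$ for $n \geq 3$. Since $\cos^{-1}$ is strictly decreasing, $|T_n|$ is strictly increasing in $\alpha$. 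Applying this to $T_n'$ and $T_n$, which share the same side count $n$, and using $\angle(T_n') = \angle(T_m) \leq \angle(T_n)$, we conclude
\[
|T_n'| \leq |T_n|.
\]

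Chaining the two inequalities yields $|T_m| < |T_n'| \leq |T_n|$, contradicting the hypothesis that $|T_m| = |T_n|$. Hence $\angle(T_m) > \angle(T_n)$.

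The only genuine work here is the monotonicity computation, which is a routine calculus exercise on the explicit side-length formula; no substantive obstacle is anticipated. The hypothesis that the angles are at most $\pi$ is used only to ensure the side-length formula applies and that the auxiliary polygon $T_n'$ exists as a convex regular spherical polygon.
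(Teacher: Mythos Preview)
Your proof is correct and follows essentially the same approach as the paper: both invoke Lemma~\ref{sameanglepolygons} to compare same-angle polygons and then use the monotonicity of side-length in the angle (for fixed side count) to finish. The paper phrases this as directly growing $T_m$ until its side-length matches $|T_n|$, whereas you package it as a contradiction via an auxiliary $T_n'$ and supply an explicit derivative computation for the monotonicity; these are minor variations on the same argument.
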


\begin{proof}

Begin with $T_m$ and $T_n$ having identical angles so that $|T_m| < |T_n|$. As we increase $|T_m|$ to equal $|T_n|$, we necessarily increase $\angle (T_m)$ as well, since the length of the geodesic segment which constitutes a side of $T_m$ is equal to the angle at the origin of $S^2$. Therefore, $\angle (T_m) > \angle (T_n)$. 
\end{proof}




\begin{definition}
We use the term \textbf{singleton} to refer to a tile which meets no other tile in an edge-to-edge manner in a tiling.
\end{definition}

\begin{definition}
Let $T_1$ and $T_2$ be tiles which overlap in the interior of their sides $e_1$ and $e_2$ and either meet corner-to-corner at a vertex corresponding to these sides or with one playing the role of an edge-vertex tile at a corner of the other. We use the word \textbf{crevice} to refer to the angle of the unoccupied portion of the vertex at which the tiles intersect, as in Figure \ref{crevice}. Note that crevices are only defined for a partial tiling. 
If $T_1$ is an edge-vertex tile at a vertex, so the side of $T_1$ extends beyond the side of $T_2$, then we say that $T_1$ {\bf overhangs} $T_2$ at that vertex.
\end{definition}

\begin{figure}[htpb]
\begin{center}
\includegraphics[scale=.7]{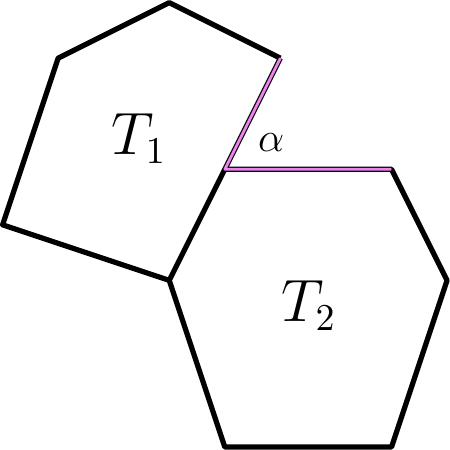}
\caption{A crevice of angle $\alpha$ formed by $T_1$ overhanging $T_2$.}
\label{crevice}
\end{center}
\end{figure}

\begin{definition}
Let $T_2$ be a tile which meets tile $T_1$ at corner $c_2$ and tile $T_3$ at corner $c_3$ such that $c_2 \neq c_3$ and $c_2$ and $c_3$ are endpoints of some side $e$ of $T_2$. Let $T_1$ have interior angle $\alpha$, $T_2$ have interior angle $\beta$, and $T_3$ have interior angle $\gamma$. If $T_2$ does not yet meet a different tile at side $e$, and if $\alpha + \beta > \pi$ and $\beta + \gamma > \pi$, then the unoccupied region at edge $e$ is referred to as a \textbf{well}. An example of a well appears in Figure \ref{fig:exWell}.
\end{definition}

\begin{figure}[htpb]
\begin{center}
\includegraphics[scale=.8]{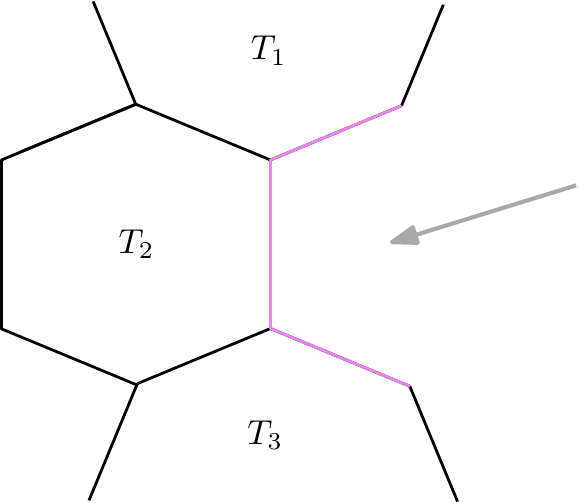}
\caption{A well formed by tiles $T_1$, $T_2$, and $T_3$}
\label{fig:exWell}
\end{center}
\end{figure}

\begin{definition}
A \textbf{patch} $\mathcal{P}$ is a set of at least two tiles on the sphere, none overlapping in their interiors, with a connected union such that we can get from any one tile  to any other by passing through the interiors of a  sequence of edges shared by the tiles. 
\end{definition}

Note that this differs from the usual definition that assumes the patch is topologically a disk. 

We note in Figure \ref{halfvertex} that when we have two tiles meeting at a half vertex, since they meet at their  corners along the side of another tile, the sum of their angles must be $\pi$. We will explicate what must hold about non-edge-to-edge tilings with what follows, though we first need the following well-known facts.

\begin{lemma}

Let $T$ be a spherical triangle with angles $\alpha$, $\beta$, and $\gamma$. Then the area of $T$ measured in radians is $\alpha + \beta + \gamma - \pi$.

\end{lemma}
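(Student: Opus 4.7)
The plan is to give the classical lunes proof due to Girard. First I would introduce the auxiliary fact that a lune on $S^2$ with dihedral angle $\theta$ has area $2\theta$; this comes directly from the total area $4\pi$ of the sphere together with the observation that lune area is proportional to the lune angle (a lune of angle $2\pi$ being the whole sphere).

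Next I would extend each of the three sides of $T$ to a full great circle, obtaining three great circles in general position that partition $S^2$ into $8$ spherical triangles. Labeling the vertices of $T$ as $A,B,C$ with angles $\alpha,\beta,\gamma$, I would identify for each vertex the lune whose two boundary arcs are the great circles through that vertex; for example, the lune $L_A$ at $A$ has angle $\alpha$, runs from $A$ to its antipode $A'$, and is cut by the great circle through $BC$ into precisely two of the eight triangles, one of them being $T$ itself. Thus each of the three lunes $L_A$, $L_B$, $L_C$ decomposes as $T$ together with exactly one other of the eight triangles, and these three ``other'' triangles are pairwise distinct.

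From this decomposition I get
\[
\mathrm{Area}(L_A)+\mathrm{Area}(L_B)+\mathrm{Area}(L_C) \;=\; 3\,\mathrm{Area}(T)+\bigl(\text{areas of the three other triangles}\bigr),
\]
while the left-hand side equals $2\alpha+2\beta+2\gamma$. Repeating the same argument with the antipodal lunes at $A',B',C'$ (which have the same angles and cover the remaining four triangles, including the antipodal triangle $T'$ congruent to $T$) and adding the two identities gives
\[
4(\alpha+\beta+\gamma) \;=\; 2\,\mathrm{Area}(T)+2\,\mathrm{Area}(T')+\sum_{\text{all 8 triangles}}\mathrm{Area} \;=\; 4\,\mathrm{Area}(T)+4\pi,
\]
since the eight triangles tile $S^2$ and $\mathrm{Area}(T')=\mathrm{Area}(T)$. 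Solving yields $\mathrm{Area}(T)=\alpha+\beta+\gamma-\pi$.

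The only slightly delicate step is verifying that $L_A$, $L_B$, $L_C$ really do meet pairwise only in $T$ and that together they cover exactly four of the eight triangles (with the antipodal triple covering the other four). I would handle this by picking coordinates placing $A$ at the north pole so that $L_A$ becomes a standard meridional lune, and then reading off directly which of the eight regions lie inside each lune; the symmetry between the two hemispheres cut out by any of the three great circles then makes the antipodal pairing transparent.
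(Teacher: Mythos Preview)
Your argument is the classical Girard lunes proof and it is correct; the bookkeeping with the six lunes and eight triangles checks out, and the final arithmetic $4(\alpha+\beta+\gamma)=4\,\mathrm{Area}(T)+4\pi$ is right. The paper, however, does not actually prove this lemma at all: it introduces it as one of ``the following well-known facts'' and states it without proof, so there is nothing to compare against beyond noting that you have supplied a full, standard proof where the paper relies on the reader's background.
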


\begin{corollary}
Let $T_1, T_2$ be two adjacent regular triangles with interior angles $\alpha, \beta$ such that $\alpha + \beta = \pi$. Then $a(T_1) + a(T_2) = \pi$.
\end{corollary}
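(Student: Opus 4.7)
The plan is to apply the preceding lemma on spherical triangle area directly to each regular triangle and then combine the results using the supplementary-angle hypothesis.

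First I would observe that since $T_1$ is a regular triangle with interior angle $\alpha$, all three of its angles equal $\alpha$, so the preceding lemma gives $a(T_1) = 3\alpha - \pi$. By the same reasoning applied to $T_2$, we obtain $a(T_2) = 3\beta - \pi$.

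Adding these two expressions gives
\[
a(T_1) + a(T_2) = 3\alpha + 3\beta - 2\pi = 3(\alpha + \beta) - 2\pi.
\]
Substituting the hypothesis $\alpha + \beta = \pi$ then yields $a(T_1) + a(T_2) = 3\pi - 2\pi = \pi$, as desired.

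This proof is essentially a one-step computation, so there is no real obstacle; the only thing to be careful about is invoking regularity to conclude that each triangle's area is determined by its single interior angle via the spherical-excess formula.
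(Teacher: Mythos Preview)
Your proof is correct and is exactly the intended argument: the paper states this corollary immediately after the spherical-excess lemma and leaves its proof implicit, so the one-line computation $a(T_1)+a(T_2)=(3\alpha-\pi)+(3\beta-\pi)=3(\alpha+\beta)-2\pi=\pi$ is precisely what is meant.
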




\begin{corollary}
Let $\rho$ be the sum of the interior angles of a spherical triangle. Then $\rho > \pi$.
\end{corollary}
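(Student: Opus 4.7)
The plan is to deduce this immediately from the preceding lemma, which states that a spherical triangle $T$ with angles $\alpha$, $\beta$, $\gamma$ satisfies $a(T) = \alpha + \beta + \gamma - \pi$. Since any spherical triangle (taken to be a non-degenerate region on $S^2$) has strictly positive area, I would simply observe that
\[
0 < a(T) = \alpha + \beta + \gamma - \pi = \rho - \pi,
\]
and rearranging gives $\rho > \pi$.

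There is essentially no obstacle here; the only thing worth noting is the implicit assumption that a ``spherical triangle'' means a genuine region bounded by three geodesic arcs, rather than a degenerate configuration with all three vertices collinear on a great circle. If I wanted to be pedantic I could briefly remark that degenerate triangles are excluded by convention (and that the area formula $\alpha+\beta+\gamma-\pi$ has already been applied in the lemma to a triangle in the usual sense). Otherwise the one-line computation above is the entire argument.
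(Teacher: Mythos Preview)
Your argument is correct and is exactly the intended one: the paper states this as a corollary immediately following the Girard area formula $a(T)=\alpha+\beta+\gamma-\pi$, and the positivity of area gives $\rho>\pi$ in one line, just as you wrote.
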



\begin{lemma}
Let $\alpha$ be the interior angle of a regular spherical $n$-gon, with $n \geq 3$. Then $\alpha > \frac{(n-2)\pi}{n}$.
\end{lemma}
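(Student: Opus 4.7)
The plan is to exploit the previous corollary, which states that the angle sum of any spherical triangle exceeds $\pi$, together with the natural decomposition of a regular spherical $n$-gon into $n$ congruent isoceles triangles.

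First I would fix the regular spherical $n$-gon $T$ and let $c$ be its center, i.e., the point equidistant from all corners of $T$. Joining $c$ by geodesic segments to each of the $n$ corners of $T$ partitions $T$ into $n$ congruent isoceles spherical triangles. By the rotational symmetry of $T$ about $c$, each of these triangles has an angle of $2\pi/n$ at $c$, and the two remaining angles at the corners of $T$ are equal; since the two triangles meeting along a side of $T$ together contribute the full interior angle $\alpha$ at each corner, each base angle of these isoceles triangles equals $\alpha/2$.

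Next I would apply the previous corollary (the angle sum of a spherical triangle is strictly greater than $\pi$) to any one of these $n$ sub-triangles, yielding
\[
\frac{2\pi}{n} + \frac{\alpha}{2} + \frac{\alpha}{2} > \pi.
\]
Solving for $\alpha$ gives $\alpha > \pi - \frac{2\pi}{n} = \frac{(n-2)\pi}{n}$, as claimed.

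The argument is essentially immediate once the decomposition is set up, so I do not expect any real obstacle. The only small subtlety is justifying the existence of the center $c$ and the decomposition into $n$ honest spherical triangles, but this is standard for a regular spherical $n$-gon whose interior angle is less than $\pi$, and the inequality we want to prove is a strict lower bound, so there is nothing degenerate to worry about at the boundary cases.
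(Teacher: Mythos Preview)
Your argument is correct. It is close in spirit to the paper's proof but slightly more direct: the paper simply observes that the Euclidean regular $n$-gon has interior angle $(n-2)\pi/n$ and that the spherical case, being the limit as the polygon shrinks, must have strictly larger angles. You instead carry out the central decomposition explicitly and apply the immediately preceding corollary (angle sum of a spherical triangle exceeds $\pi$) to one of the $n$ isoceles pieces. Your version has the minor advantage of making the dependence on that corollary explicit rather than appealing to a limiting comparison; the paper's version is terser but relies on the reader accepting the Euclidean-limit heuristic.
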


\begin{proof}

In the Euclidean plane, triangulating a regular $n$-gon yields interior angles of $\frac{(n-2)\pi}{n}$. But a spherical $n$-gon has angles greater than the Euclidean case, which is the limit as the spherical regular $n$-gon shrinks in size. \qedhere
\end{proof}

\begin{lemma} Given a regular $n$-gon $T$, $0 < |T| \leq \frac{2\pi}{n}$.
\end{lemma}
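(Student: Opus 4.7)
The plan is to exploit the $n$-fold rotational symmetry of $T$ and compare the geodesic distance between adjacent vertices to the length of a convenient smooth curve joining them. By symmetry $T$ has a center $c\in S^{2}$, and the $n$ corners of $T$ all lie at the same spherical distance $r$ from $c$. Equivalently, the corners lie on a (small) circle $C$ on $S^{2}$ whose points have polar distance $r$ from $c$. The rotation of $S^{2}$ through $2\pi/n$ about $c$ permutes the corners cyclically, so they are equally spaced on $C$.

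Next I would compute the length of an arc of $C$ between two adjacent corners. A small circle at polar distance $r$ on the unit sphere is a Euclidean circle of radius $\sin r$ in $\mathbb{R}^{3}$ and hence has circumference $2\pi\sin r$. So consecutive corners are separated along $C$ by arclength $\tfrac{2\pi\sin r}{n}$. Since a side of $T$ is the minimizing geodesic between two adjacent corners, and any other curve between them — in particular the $C$-arc — is at least as long,
$$|T|\;\le\;\frac{2\pi\sin r}{n}\;\le\;\frac{2\pi}{n},$$
using $\sin r\le 1$ for the second inequality. The lower bound $|T|>0$ is immediate since a tile is non-degenerate.

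The only real subtlety is ensuring that the argument covers every regular $n$-gon, including the possibility that $\angle(T)\ge\pi$ (which does arise in the magic triangle tiling). In that case $T$ is the closure of the complement of a regular $n$-gon $T'$ of angle less than $\pi$, sharing the same corners and sides. One can either invoke that $|T|=|T'|$ directly, or take $c$ to be the antipode of the center of $T'$, in which case the circumradius becomes $\pi-r$ and $\sin(\pi-r)=\sin r$, so the identical estimate applies. Equality $|T|=\tfrac{2\pi}{n}$ forces $\sin r=1$, i.e.\ $r=\pi/2$, meaning the corners lie on a great circle and $T$ is a hemisphere partitioned by $n$ equally spaced points on its equator — the natural extremal configuration, which will be useful for recognizing hemispherical tiles later in the paper.
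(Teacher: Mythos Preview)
Your proof is correct and takes a genuinely different route from the paper. The paper argues informally by continuous deformation: start with an arbitrarily small regular $n$-gon centered at the north pole and expand it, observing that the side-length increases until the sides lie on the equator (angles equal to $\pi$), at which moment $|T|=2\pi/n$. No explicit inequality is written down; monotonicity of side-length under expansion is taken as evident. Your argument instead fixes the polygon and compares the geodesic side to the arc of the circumscribed small circle, yielding the clean chain $|T|\le \tfrac{2\pi\sin r}{n}\le \tfrac{2\pi}{n}$. This is more quantitative and immediately identifies the equality case ($r=\pi/2$, i.e.\ the hemispherical $n$-gon), whereas the paper reaches that extremal configuration only as the endpoint of the deformation. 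Your treatment of the $\angle(T)\ge\pi$ case via the complementary polygon is also more explicit than anything in the paper's short proof. One minor point worth flagging: you assume a side is the \emph{minimizing} geodesic between adjacent corners, which is the standard convention for spherical polygons but is not stated in the paper; the paper's expansion argument sidesteps this by never separating the side from its defining family.
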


\begin{proof} We can start with a regular $n$-gon centered at the north pole of the sphere that is arbitrarily small.  As we expand the $n$-gon, its side length will reach a maximum when all of its edges are along the equator, at which point its angles are all $\pi$. At that instant, its side-length is $2\pi/n$.
\end{proof}

We wish to note that on the sphere we can use polygonal tiles with fewer than three edges. These polygonal tiles are \emph{bigons} –– otherwise referred to as \emph{lunes} –– and they are bounded by edges formed from two intersecting great circles, with their corners at opposite points on the sphere. We include hemispheres as examples of bigons, and we allow the angle at the apex of the bigon to be any angle strictly between 0 and $2\pi$. In this paper, we exclude bigon tiles from our tilings and put off the classification of non-edge-to-edge tilings when bigons are allowed to a later paper. Although bigons decomposed into regular polygons play a key role in this paper, we will only allow regular spherical tiles of three  or more geodesic edges.

\begin{lemma}\label{tri.sq.pent}
At every half vertex, the only possible combinations for supplementary tiles are triangle-triangle, triangle-square, and triangle-pentagon.
\end{lemma}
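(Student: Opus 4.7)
The plan is to use the strict lower bound on interior angles of regular spherical $n$-gons from the earlier lemma (angle $> (n-2)\pi/n$), together with the supplementarity condition at a half vertex, to rule out all combinations other than the three listed.

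First I would recall that by the definition of a half vertex, two tiles meet at the half vertex with corners whose interior angles must sum to exactly $\pi$. Call these tiles $T_n$ and $T_m$ with $n,m \geq 3$, and denote their interior angles by $\alpha_n$ and $\alpha_m$. Because the previous lemma gives $\alpha_k > (k-2)\pi/k = \pi - 2\pi/k$ for any regular spherical $k$-gon, the condition $\alpha_n + \alpha_m = \pi$ forces
\[
\alpha_n = \pi - \alpha_m < \pi - \left(\pi - \tfrac{2\pi}{m}\right) = \tfrac{2\pi}{m},
\]
and simultaneously $\alpha_n > \pi - 2\pi/n$. Combining these gives $\pi - 2\pi/n < 2\pi/m$, i.e.\ $\frac{1}{n}+\frac{1}{m} > \frac{1}{2}$.

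Next I would just enumerate the integer pairs $(n,m)$ with $n,m \geq 3$ satisfying $\tfrac{1}{n}+\tfrac{1}{m} > \tfrac{1}{2}$. Assuming without loss of generality $n \leq m$: if $n=3$, the inequality becomes $1/m > 1/6$, so $m \in \{3,4,5\}$; if $n=4$, we would need $1/m > 1/4$, forcing $m<4$, a contradiction with $m \geq n = 4$; if $n \geq 5$, then $1/n + 1/m \leq 2/5 < 1/2$, again a contradiction. So the only possibilities are $(3,3)$, $(3,4)$, and $(3,5)$, which are precisely the triangle-triangle, triangle-square, and triangle-pentagon combinations claimed.

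Finally, I would briefly check that each of these three combinations is in fact geometrically realizable as supplementary tiles at a half vertex, so no further exclusions are needed: for any of the three pairs, one can choose the common angle so that the pair of angles sums exactly to $\pi$, because the interior-angle function of a regular spherical $n$-gon varies continuously from $(n-2)\pi/n$ up to $\pi$ as the side-length grows. The main (and really only) obstacle is just being careful that the bound $\alpha_k > (k-2)\pi/k$ is strict and holds for all admissible spherical $n$-gons; this is exactly what the earlier lemma gives us, so the argument goes through cleanly.
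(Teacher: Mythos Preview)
Your proof is correct and uses essentially the same idea as the paper: both arguments rest on the strict lower bound $\alpha_k > (k-2)\pi/k$ together with the supplementarity condition $\alpha_n + \alpha_m = \pi$. The paper rules out $n \geq 6$ and then the case of two non-triangles by direct angle estimates, whereas you package the same bounds into the single inequality $\tfrac{1}{n}+\tfrac{1}{m} > \tfrac{1}{2}$ and enumerate; this is a slightly cleaner presentation of the identical reasoning.
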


\begin{proof}
Suppose we have three tiles $T_1$, $T_2$, and $T_3$ on $S^2$ such that all three are regular spherical polygons and such that $T_1$ and $T_2$ meet at a half vertex on an edge of $T_3$. Let $\angle(T_1)=\alpha$ and $\angle(T_2)=\beta$. Then $\alpha + \beta = \pi$.

To see that a half vertex may not contain an $n$-gon for $n \geq 6$, note that the interior angle of such an $n$-gon is strictly greater than $\frac{2\pi}{3}$. Since there does not exist a supplementary $m$-gon ($m \geq 3$) with interior angle strictly less than $\frac{\pi}{3}$, no such $n$-gon may exist at a half vertex, restricting to the use of only triangles, squares, and pentagons at a half vertex. However, for the same reason as before, a half vertex may also not contain two $k$-gons for $k \geq 4$ because the sum of their angles is strictly greater than $\frac{\pi}{2} + \frac{\pi}{2} = \pi$. 
Therefore, we see that at a half vertex we are restricted to the combinations triangle-triangle, triangle-square, and triangle-pentagon. For each of these combinations, their interior angles may sum to $\pi$.
\end{proof}


\begin{definition} A pair of tiles are called {\bf supp-same} if both tiles have the same side-length and their angles are supplementary.
\end{definition}

\begin{lemma}\label{suppsame} The only pairs of supp-same tiles that can appear in a valid tiling of the sphere are two right-angled triangles with side-length $\pi/2$, a triangle and square with side-lengths $\pi/3$ and a triangle and pentagon with side-lengths $\pi/5$. 
\end{lemma}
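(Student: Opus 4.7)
The plan is to combine Lemma \ref{tri.sq.pent} with the explicit formula relating side-length and interior angle of a regular spherical $n$-gon that appears in the proof of Lemma \ref{sameanglepolygons}, namely
\[
\cos(|T_n|) = \frac{\cos(2\pi/n) + \cos^2(\alpha/2)}{\sin^2(\alpha/2)},
\]
where $\alpha = \angle(T_n)$. By Lemma \ref{tri.sq.pent}, two regular polygons with $\geq 3$ sides whose angles sum to $\pi$ must be triangle-triangle, triangle-square, or triangle-pentagon. So I would dispose of the three cases separately.

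The triangle-triangle case is essentially immediate: since the formula makes $|T_n|$ a strictly monotone function of $\alpha$ for each fixed $n$, two triangles with equal side-length must have equal angle. Supplementarity then forces each angle to be $\pi/2$, and substituting $\alpha = \pi/2$ and $n=3$ into the formula yields $\cos|T| = 0$, hence $|T| = \pi/2$.

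For the triangle-square and triangle-pentagon cases, I would set $\alpha = \angle(T_3)$ and $\beta = \pi - \alpha$ for the other tile, and equate $\cos(|T_3|) = \cos(|T_n|)$ with $n = 4$ or $n=5$. Using $\cos^2(\beta/2) = \sin^2(\alpha/2)$ and $\sin^2(\beta/2) = \cos^2(\alpha/2)$, both sides become rational in $c := \cos^2(\alpha/2)$. For $n=4$ (with $\cos(\pi/2)=0$), the equation reduces to $c(c - 1/2) = (1-c)^2$, giving $c = 2/3$, whence $\cos|T| = 1/2$ and $|T| = \pi/3$. For $n=5$, using $\cos(2\pi/5) = (\sqrt 5 - 1)/4$, the analogous equation yields $c = (5+\sqrt 5)/10$ and $\cos|T| = (\sqrt 5 + 1)/4 = \cos(\pi/5)$, so $|T| = \pi/5$.

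The main obstacle is purely algebraic: keeping the pentagon computation clean. It is important to recognize the two classical values $\cos(2\pi/5) = (\sqrt 5 - 1)/4$ and $\cos(\pi/5) = (\sqrt 5 + 1)/4$ so that the final side-length simplifies to $\pi/5$ rather than an unwieldy arccosine. A small sanity check I would include at the end is that the computed angles lie in the admissible interior-angle range for each polygon type (so that the resulting tiles are genuine spherical regular polygons with angle strictly less than $\pi$); numerically $\alpha \approx 70.5^\circ$ in the triangle-square case and $\alpha \approx 63.4^\circ$ in the triangle-pentagon case, both valid.
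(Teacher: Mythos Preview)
Your proof is correct, but it takes a different route from the paper's. The paper argues non-constructively: for the triangle--square and triangle--pentagon cases it observes that as the triangle's angle $\alpha$ increases, the triangle's side-length strictly increases while the complementary polygon's angle $\pi-\alpha$ (and hence its side-length) strictly decreases, so the two side-length curves can cross at most once; existence of the crossing is then read off from the cuboctahedral and icosidodecahedral tilings. You instead equate the two side-length formulas directly, reduce to a linear equation in $c = \cos^2(\alpha/2)$, and solve explicitly. Your approach is more computational but fully self-contained: it does not appeal to the existence of the Archimedean solids, and it delivers the exact angles and side-lengths rather than merely asserting uniqueness. The paper's argument is cleaner conceptually and generalizes more readily (no special values of $\cos(2\pi/5)$ needed), at the cost of invoking outside knowledge for existence. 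Both are perfectly valid.
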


\begin{proof} By Lemma \ref{tri.sq.pent}, the only possibilities must be a triangle-triangle pair, a triangle-square pair and a triangle-pentagon pair. In all cases, both tiles must have angle strictly less than $\pi$. In this angle range,  the side-length of a tile grows monotonically with the angle.

In the case of two triangles, the fact their side-length is the same implies their angles are the same. Since their angles are supplementary, both have angle $\pi/2$. 

For either a triangle-square pair or a triangle-pentagon pair, increasing the angle of the first decreases the angle of the second. The fact that side-length grows with angle implies there is a unique pair that is supp-same. Note that the three examples given exist by considering the octahedral tiling, the cuboctahedral tiling and the icosadodecahedral tiling.
\end{proof}

\begin{definition} A supp-same pair that is two triangles, each of side-length $\pi/2$ is called a {\bf tri-tri pair}. A supp-same pair that is a triangle and square, each of side-length $\pi/3$ is called a {\bf tri-square pair}. And a supp-same pair that is a triangle and pentagon, each of side-length $\pi/5$ is called a {\bf tri-pent pair}. Any tile congruent to some tile in these pairs is known as a {\bf special tile}. 
\end{definition}




\section{Minimal length tiles appearing only as singletons}\label{singletonsection}


In this section, we consider the case of tilings such that all smallest side-length tiles appear as singletons, never sharing a full side with another tile of smallest side-length.

We  first consider the possibilities for such a singleton  $T$. Any tile glued to a side of $T$ is either the same side-length or longer. Since $T$ is a singleton of smallest side length, 
the edge of an adjacent tile $T'$ must overhang a corner of $T$, creating a half-vertex at that corner. Since $T$ is in a half-vertex, it can only be a triangle, square, or pentagon to satisfy Lemma \ref{tri.sq.pent}. The following lemma explains how $T$ and $T'$ must be positioned relative to one another. 

\begin{lemma} \label{cornersmatch} Let $T$ be a minimal length singleton and let $T'$ intersect $T$ at interior points of sides on each. Then $T$ must be glued to $T'$ such that one corner of $T'$ touches one corner of $T$.
\label{meetAtCorner}
\end{lemma}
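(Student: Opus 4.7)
The plan is to assume no corner of $T'$ coincides with a corner of $T$ and derive a contradiction by building chains of supplementary tiles around $T$'s boundary. Because $T$ has minimum side length and is a singleton, any tile $S$ sharing a full side with $T$ must satisfy $|S|>|T|$: minimality gives $|S|\ge |T|$, while equality plus a shared full side would violate the singleton condition. The same side-length comparison rules out the configuration in which $T'$'s side is properly contained in $T$'s side. The remaining configurations are (B) both endpoints of the overlap are corners of $T$ lying in the interior of $T'$'s side, and (C) the ``staircase,'' in which one endpoint is a corner of $T$ in the interior of $T'$'s side and the other is a corner of $T'$ in the interior of $T$'s side.

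In either configuration some corner of $T$ lies at a half-vertex, so by Lemma~\ref{tri.sq.pent} $T$ is a triangle, square, or pentagon. At each half-vertex formed by a corner $c$ of $T$, the supplementary tile $S$ has its corner at $c$ and is adjacent to $T$ along $T$'s other edge at $c$; since $|S|>|T|$, $S$'s edge strictly extends past the adjacent corner $c'$ of $T$. This turns $c'$ into a new half-vertex with $T$ as corner tile and $S$ as edge-vertex tile; introducing the new supplementary tile at $c'$ and iterating, we produce a chain of supplementary tiles advancing corner-by-corner around $T$.

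In configuration (B) we launch two such chains from the two corners of $T$ on the shared geodesic, running around $T$ in opposite directions. They converge at the opposite corner $v$ of $T$ (the third corner for a triangle, or, for a square or pentagon, the diagonally opposite corner reached after one or two intermediate propagation steps). At $v$, $T$'s corner contributes angle $\angle(T)$, and two distinct edge-vertex tiles---one from each chain---each contribute a half-plane of angle $\pi$, on opposite sides of $T$'s two edges at $v$. Those two half-planes share the wedge of angle $\angle(T)$ sitting directly opposite $T$ across $v$, so the two supplementary tiles overlap in a region of positive area, contradicting the tiling. In configuration (C), the corner of $T'$ in the interior of $T$'s side produces a supplementary tile $T_c$ opposite $T$ across the shared geodesic with one of its sides along the geodesic; by minimality $|T_c|>|T|$, so this side extends past the far corner of $T$ on the shared geodesic, turning it into a half-vertex and starting the second chain. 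The same overlap contradiction then follows.

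The main obstacle will be verifying the chain propagation cleanly for the square and pentagon cases: one must confirm at each step that the new supplementary tile is distinct from all preceding tiles in the chain, that the two chains do not collide prematurely at an intermediate corner, and that they ultimately meet at a single corner of $T$ where two genuinely distinct edge-vertex tiles produce the required overlap.
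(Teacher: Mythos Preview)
Your proposal is correct and takes essentially the same approach as the paper: both arguments surround $T$ with supplementary tiles that, by minimality and the singleton hypothesis, strictly overhang each successive corner of $T$, forcing an unavoidable overlap (or an edge-to-edge gluing of a tile of length $|T|$) at the last side or corner. Your version is more explicit in separating the ``engulfment'' and ``staircase'' configurations and in tracking the two chains for squares and pentagons, while the paper compresses all of this into a short sketch with a figure, but the underlying mechanism is identical.
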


\begin{proof}
Suppose that we try to surround $T$ such that no corner of $T$ meets a corner of $T'$ so that $T$ lies along a side of $T'$ (see Figure \ref{fig:wrongengulf}). If we attempt to surround $T$, as we cover each side with some other polygon, we arrive at a crevice that can only be filled by either having tiles overlap, or by using a polygon with a side length the same as that of $T$ glued edge-to-edge to $T$. But this contradicts the fact that $T$ is a singleton. It follows that in order to obtain a non-trivial tiling, a corner of $T'$ must meet a corner of $T$.
\end{proof}

\begin{figure}[h]\label{fig:wrongengulf}
\centering
\includegraphics[scale=0.6]{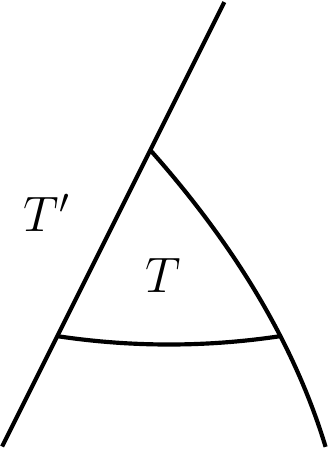}\caption{Improper engulfment}
\end{figure}

Thus we join $T$ with $T'$ such that one corner of $T$ lines up with one corner of $T'$. Let $T$ have interior angle measure $\alpha$ and $T'$ have interior angle measure $\beta$.



\begin{lemma}\label{singletonneighborssame} A minimal length  tile $T$ that only appears in singletons must always be surrounded by tiles, all of which are congruent.
\end{lemma}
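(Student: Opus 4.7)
The plan is to exploit the half-vertex structure forced at every corner of $T$, pin down the interior angle of every adjacent tile, and then argue that the number of sides is also forced. Because a regular spherical polygon is determined by its angle and its number of sides, this will yield congruence.

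First I would show that every corner $c_i$ of $T$ is a half-vertex. By Lemma \ref{cornersmatch}, any tile $T'$ adjacent to $T$ shares at least one corner with $T$, and since $T$ is a minimal-length singleton the inequality $|T'|>|T|$ is strict. Hence after $T'$ covers the shared arc up to one corner of $T$, its side continues past the opposite corner, which therefore lies in the interior of a side of $T'$. Label the adjacent tiles $T_1,\dots,T_n$ cyclically so that $T_i$ meets $T$ along side $s_i$, shares corner $c_i$ with $T$, and overhangs at $c_{i+1}$ (indices mod $n$). At the half-vertex $c_i$, which lies in the interior of a side of $T_{i-1}$, the supplementary partners are exactly $T$ and $T_i$, so $\angle(T_i)=\pi-\alpha$ where $\alpha=\angle(T)$. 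A short angle chase shows that the other side of $T_i$ at $c_i$ lies along the geodesic extension of $s_{i-1}$ past $c_i$.

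Next I would use Lemma \ref{tri.sq.pent} to restrict the shape of each $T_i$. If $T$ is a square or pentagon, Lemma \ref{tri.sq.pent} forces every $T_i$ to be a triangle; since a regular spherical triangle is determined by its angle, every such $T_i$ has the same side-length, and all are congruent. The delicate case is when $T$ is a triangle, because then each $T_i$ could a priori be a triangle, square, or pentagon with angle $\pi-\alpha$, and by Lemma \ref{sameanglepolygons} these three candidates have three distinct side-lengths. Here I would look at the geodesic carrying $s_i$ past $c_{i+1}$: on it $T_i$ protrudes a segment of length $|T_i|-|T|$ beyond $c_{i+1}$, while $T_{i+1}$ emanates its other side of length $|T_{i+1}|$ from $c_{i+1}$. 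Any discrepancy between these lengths creates an additional half-vertex whose supplementary partner must have angle $\alpha$, and Lemma \ref{tri.sq.pent} again restricts what that partner can be. Propagating this cyclically around the $n$ corners of $T$, I expect that the only consistent configuration has $|T_i|$ constant in $i$; combined with the common angle $\pi-\alpha$, this forces a common number of sides and hence congruence.

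The main obstacle is precisely this last case: ruling out mixed triangle/square/pentagon neighbours when $T$ is a triangle, and checking that the chain of constraints forced by the auxiliary half-vertices closes up around $T$ only when the $T_i$ all coincide. The shape-forcing step for $T$ a square or pentagon is essentially a one-line consequence of Lemma \ref{tri.sq.pent}; the triangle case carries almost the entire weight of the proof.
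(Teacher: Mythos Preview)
Your setup is correct and matches the paper: every corner of $T$ is a half-vertex, the $n$ neighbours $T_1,\dots,T_n$ all have angle $\pi-\alpha$, and when $T$ is a square or pentagon Lemma~\ref{tri.sq.pent} immediately forces every $T_i$ to be a triangle, hence congruent. The gap is in the triangle case, where your argument is only a sketch: you write that you ``expect'' the cyclic chain of overhang constraints to close up only when all $|T_i|$ coincide, but you do not actually carry this out, and as written the proposed mechanism never invokes the hypothesis that copies of $T$ are singletons. That hypothesis is essential; a purely angle-and-length cyclic argument does not obviously rule out mixed triangle/square/pentagon neighbours.

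The paper's argument for the triangle case is local and uses the singleton hypothesis directly. If the neighbours are not all congruent, somewhere a tile $T_1$ with more sides is followed (clockwise) by a tile $T_2$ with fewer sides; since both have angle $\pi-\alpha$, Lemma~\ref{sameanglepolygons} gives $|T_1|<|T_2|$, so at the far corner of $T_1$ a crevice of angle $\alpha$ opens between $T_1$ and $T_2$. Any non-triangle of angle $\alpha$ would have side-length less than $|T|$, contradicting minimality, so the crevice is filled by a copy $T_3$ of $T$. But now $T_3$ sits in a well of base $|T|$, which can only be filled by another minimal-length tile glued edge-to-edge to $T_3$ --- contradicting the assumption that every copy of $T$ is a singleton. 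This is the missing idea in your sketch: the contradiction comes not from a cyclic length incompatibility but from the well forcing an edge-to-edge minimal tile.
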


\begin{proof}
Because $T$ is a singleton, Lemma \ref{cornersmatch} implies that each of its corners meets neighboring tiles at a half vertex. Those tiles can therefore be squares, pentagons, or larger triangles with angles $\beta$ which supplement $\alpha$.

Up to reflection, we can assume that the edges of the adjacent tiles extend out clockwise around $T$.
Each such tile must have angle $\beta$, supplementary to $\alpha$. If $T$ is a square or pentagon, then the fact every half-vertex must include a triangle implies all of the surrounding tiles are triangles with the same angle, and therefore are congruent. 

So, we need only prove it in the case $T$ is a triangle. Suppose there is more than one type of polygon represented in the three surrounding tiles. Then there must be a tile $T_2$ with fewer edges that is immediately clockwise from a tile $T_1$ of more edges. Together they create a crevice with angle $\alpha$. Since $T$ is the smallest side-length tile of angle $\alpha$ and a square or pentagon with angle $\alpha$ would have a smaller side-length, the only tile $T_3$ that can fill this crevice is a copy of $T$. But, as shown in Figure \ref{well1}, this creates a well with a side-length equal to that of $T$. This can only be filled using another tile of minimal side-length, contradicting the fact all copies of $T$ appear as singletons.

Thus, we see that $T$ must be surrounded by tiles all congruent to a single prototile $T'$. \qedhere

\end{proof}

\begin{figure}[h]
    \centering
    \includegraphics[scale=.97]{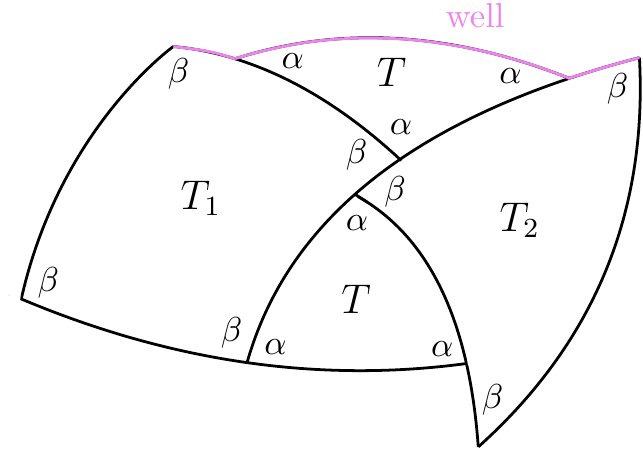}
    \caption{If tiles surrounding a singleton triangle are not congruent, we obtain a contradiction.}
    \label{well1}
\end{figure}

\subsection{Building from a Central Triangle}

\begin{lemma} If there is a minimum side-length prototile that only appears as a singleton triangle, then the tiling is either a triangle-triangle kaleidoscope tiling, a triangle-square kaleidoscope tiling or a triangle-pentagon kaleidoscope tiling. 
\end{lemma}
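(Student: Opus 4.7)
The plan is to leverage the rigidity of the local configuration around $T$, together with the singleton hypothesis, to propagate a forced structure over the whole sphere, obtaining one of the three named kaleidoscope tilings depending on the shape of $T'$.

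The starting point is Lemma~\ref{singletonneighborssame}: $T$ must be surrounded by three congruent copies $T'_1, T'_2, T'_3$ of a single tile $T'$, each meeting $T$ along one of its sides with one corner of $T'$ coinciding with a corner of $T$ and with its other endpoint landing at a half-vertex interior to the next side of $T$. By Lemma~\ref{tri.sq.pent}, $T'$ must be either a triangle, a square, or a pentagon, and these three possibilities are intended to correspond to the three named kaleidoscope cases.

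The crucial forcing step is the following observation, which I would state and use repeatedly: at every half-vertex introduced in the tiling (whether those placed along the sides of $T$ or subsequent ones at the far corners of the $T'$'s), the tile that fills the $\alpha$ needed to supplement the $\beta$-corner must itself be a copy of $T$. Indeed, such a tile must have angle exactly $\alpha$ and side-length at least $|T|$ by minimality; but Lemma~\ref{sameanglepolygons} rules out any regular $n$-gon with $n \geq 4$ and angle $\alpha$, since all such polygons have side-length strictly less than the triangle of angle $\alpha$. So the tile is a triangle of angle $\alpha$, i.e.\ a copy of $T$. Each such new copy of $T$ is a singleton by hypothesis, so Lemma~\ref{singletonneighborssame} applies again and surrounds it with three more congruent copies of $T'$. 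Matching these new $T'$'s with the neighbors already present forces them to be congruent (same angle, hence same side-length) to the original $T'$, and iterating creates a unique combinatorial pattern of alternating $T$'s and $T'$'s that propagates outward from the initial patch.

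Because each local step is uniquely determined by the choice of whether $T'$ is a triangle, square, or pentagon (and by the angle $\alpha$), the resulting tiling is rigid. The total area constraint $4\pi$ then pins down the counts: when $T'$ is a triangle one gets four copies of each, verified by $4(3\alpha - \pi) + 4(3\beta - \pi) = 4\pi$; when $T'$ is a square one gets $8$ copies of $T$ and $6$ of $T'$, verified by $8(3\alpha - \pi) + 6(4\beta - 2\pi) = 4\pi$; and when $T'$ is a pentagon one gets $20$ copies of $T$ and $12$ of $T'$, verified by $20(3\alpha - \pi) + 12(5\beta - 3\pi) = 4\pi$. These counts, combined with the forced adjacency pattern, are exactly those of the triangle-triangle, triangle-square, and triangle-pentagon kaleidoscope tilings described in the introduction.

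The hard part is checking that the outward propagation closes up consistently for every allowed value of $\alpha$ in each of the three cases, rather than running into a contradiction or leaving a gap. I would handle this by verifying at each corner that the local angle sum $\alpha + \beta + \pi = 2\pi$ (with $\pi$ contributed by an edge-vertex tile, $\alpha$ by a copy of $T$, and $\beta$ by a copy of $T'$) is consistent with the neighbors already placed, and by observing that the forced combinatorics around every copy of $T$ and every copy of $T'$ are identical up to isometry, so the tiling inherits the tetrahedral, octahedral, or icosahedral symmetry of the corresponding Platonic/Archimedean skeleton. Once that symmetry is in hand, the identification with the named kaleidoscope family is immediate.
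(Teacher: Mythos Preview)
Your proposal is correct and follows essentially the same route as the paper: surround $T$ by congruent copies of a single $T'$ via Lemma~\ref{singletonneighborssame}, argue that every subsequent crevice of angle $\alpha$ is forced to contain a copy of $T$, and iterate outward in the three cases $T'\in\{\text{triangle, square, pentagon}\}$, confirming the tile counts by the area identity. The only cosmetic differences are that you justify the ``$\alpha$-slot must be $T$'' step via minimality and Lemma~\ref{sameanglepolygons} (the paper instead uses the direct bound $\alpha<\pi/2$, or the fact that a half-vertex always contains a triangle), and that you appeal to symmetry for closure where the paper carries out the layer-by-layer construction explicitly.
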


\begin{proof}

Let $T$ be a singleton triangle with angles $\alpha$. By Lemmas \ref{cornersmatch} and \ref{singletonneighborssame}, $T$ is surrounded by tiles, all congruent to a prototile $T'$, such that every tile shares a corner with $T$. We call this patch $\mathcal{P}$.

As we tile outward from this point, we may consider different cases depending on what $T'$ is. If $T'$ is a triangle with angle $\beta$, it follows that triangle $T$ is the only tile that can be placed at the second-layer half vertices since $T$ must have angles $\alpha < \frac{\pi}{2}$, in order to be smaller than the other triangle $T'$, and spherical pentagons and squares cannot have angle measures this small. In this case, we are left with three crevices, each of angle $\alpha$, and each of which is filled with one additional copy of $T$, yielding the tiling appearing in Figure \ref{tet-tet}. Note that since the area of a pair of the two sizes of triangles has area $\pi$, these four triangles congruent to $T$ and four triangles congruent to $T'$ do yield the total surface area of $4\pi$ for the sphere. 

If $T'$ is a square or pentagon, we must place $T$ at all second-layer half vertices since a triangle must always appear at a half vertex, and we know $T$ is the unique triangle whose angle $\alpha$ supplements that of $T'$. Each such copy of $T$, being itself a singleton, must be surrounded by copies of $T'$. 

In the case $T'$ is a square, this yields three additional squares at the next level,  touching one another on their boundaries to leave holes for three copies of $T$. The boundary of the resultant patch has three crevices of angle $\alpha$, yielding a hole for a final copy of $T$. The resulting tiling consists of  eight copies of $T$ and six copies of $T'$, with a total area of $8(3 \alpha - \pi) + 6(4\beta - 2\pi) = 4\pi$, as desired, and yielding a tiling as in Figure \ref{trisquaretiling}.

In the case $T'$ is a pentagon, we have a first layer of pentagons congruent to $T'$ surrounding $T$. The three crevices they generate require copies of $T$. Each such copy must be surrounded by pentagons, and each resulting crevice generated by those pentagons must be filled with a copy of $T$. We now have 10 triangle and 6 pentagons. Each of the six triangles on the boundary of the patch require a third pentagon to surround them, yielding a total number of 12 pentagons. These new pentagons leave gaps for an additional ten triangles, yielding a total of 20 triangles, and generating a total area of $20 (3\alpha - \pi) + 12(5\beta -3\pi) = 4\pi$. This yields a tiling as in Figure \ref{tripenttiling}.

\end{proof}

\subsection{Building from a Central Square}

\begin{lemma} If there is a minimum side-length prototile that only appears as a singleton square, then the tiling is a square-triangle kaleidoscope tiling. 
\end{lemma}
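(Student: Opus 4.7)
The plan is to mirror the approach of the preceding lemma. By Lemmas \ref{cornersmatch}, \ref{tri.sq.pent}, and \ref{singletonneighborssame}, the minimum-length singleton square $T$ (angle $\alpha>\pi/2$, side $|T|$) is surrounded at each of its corners by a single prototile $T'$; Lemma \ref{tri.sq.pent} forces $T'$ to be a regular triangle with angle $\beta=\pi-\alpha<\pi/2$ and side $|T'|>|T|$ (since $T$ is minimum length and Lemma \ref{sameanglepolygons} rules out equality). I place four copies $T'_1,\ldots,T'_4$ of $T'$ at the four corners of $T$ in a consistent clockwise orientation.

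Next, at each corner $c_i$ of $T$ the identity $\alpha+\beta=\pi$ in the tangent plane makes the ``other'' side of $T'_i$ (the one not running along a side of $T$) coincide in direction with the extension of $T'_{i-1}$'s shared side past $c_i$. This interlocking determines the outer boundary of the patch $\mathcal{P}=T\cup T'_1\cup\cdots\cup T'_4$: it has exactly four half-vertices (each a corner of some $T'_j$ sitting on the interior of $T'_{j+1}$'s side) and four exposed corners of the $T'_j$.

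At each half-vertex the crevice has angle $\alpha$, so by Lemma \ref{tri.sq.pent} the filling tile must be a triangle, square, or pentagon with angle $\alpha$. Lemma \ref{sameanglepolygons} rules out the pentagon (its side length would be strictly less than $|T|$, violating minimality). Ruling out a triangle of angle $\alpha$ (which would have side length strictly greater than $|T|$) is the main obstacle: such a triangle would overhang the exposed corner of $T'_{j+1}$, forcing a further half-vertex requiring yet another corner of angle $\alpha$, and I would need to trace this cascade forward and show that it cannot close into a valid tiling of $S^2$---either via an angle-sum mismatch at some subsequent vertex or via a global area contradiction that the only nonnegative integer solution to $n_1(4\alpha-2\pi)+n_2(3\beta-\pi)=4\pi$ valid across the admissible range of $\alpha$ has $n_1=6$, $n_2=8$, leaving no room for an auxiliary triangle of angle $\alpha$. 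Granting this step, each crevice is filled by a copy of $T$.

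Each of the four new squares is itself a minimum-length singleton, and Lemma \ref{singletonneighborssame} again forces it to be surrounded by four triangles congruent to $T'$. Iterating, three fresh triangles arise around each new square, and the only consistent completion introduces one final square antipodal to $T$, closing $S^2$ with six squares and eight triangles in the cuboctahedral combinatorial pattern (with $|T|<|T'|$). The area identity $6(4\alpha-2\pi)+8(3\beta-\pi)=4\pi$ confirms closure, producing precisely the square-triangle kaleidoscope tiling.
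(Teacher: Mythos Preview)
Your setup and the elimination of the pentagon case match the paper, and the closing paragraph (propagating the singleton condition to the four new squares, then to a final antipodal square) is essentially what the paper does as well, though the paper invokes Theorem~\ref{polygondecomposition} to certify that the last square-shaped hole cannot be filled by anything smaller.

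The genuine gap is exactly where you flag it: ruling out a triangle of angle $\alpha$ in the crevice. You propose a global area count, but as written it does not work. Your equation $n_1(4\alpha-2\pi)+n_2(3\beta-\pi)=4\pi$ only tracks squares of angle $\alpha$ and triangles of angle $\beta$; the very tile you are trying to exclude (a triangle of angle $\alpha$, area $3\alpha-\pi$) is absent from the bookkeeping, so the equation cannot rule it out. If you do include a third count $n_3$ and ask for an identity in $\alpha$, you find $(n_1,n_2,n_3)=(3,6,2)$ in addition to $(6,8,0)$, so even the strengthened area argument does not close the case without further combinatorial input. And for a fixed $\alpha$ (which is all a single tiling gives you), the Diophantine equation has still more solutions.

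The paper's argument here is local and short, and you have all the ingredients for it. Placing a triangle $T''$ of angle $\alpha>\beta$ in a crevice of $\mathcal{P}$ gives $|T''|>|T'|$, and this forces a well along the adjacent copy of $T'$ with both wall angles equal to $\alpha$ and base length exactly $|T|$. That well can only be filled by a copy of $T$. But this new copy of $T$ now meets both a copy of $T'$ and the triangle $T''$ along its sides, and $T'\not\cong T''$; this directly contradicts Lemma~\ref{singletonneighborssame}, which says every minimum-length singleton is surrounded by mutually congruent tiles. That one-line contradiction replaces your ``granting this step'' entirely.
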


\begin{proof}
If our smallest tile $T$ appears only as a singleton square, then Lemma \ref{singletonneighborssame} implies it is surrounded by congruent tiles, and the need for a triangle at each half-vertex implies the surrounding tiles are all triangles congruent to a tile $T'$. Let $\mathcal{P}$ be the resultant patch.

If we add to the crevices of this patch another layer of four squares, they will all be congruent to $T$. Since we are assuming each square congruent to $T$ is a singleton, they must all be surrounded by three triangles. Adding those in leaves space for just one more square congruent to $T$, which by Theorem \ref{polygondecomposition} cannot be filled with smaller polygons. As in Figure \ref{centralsquarenicetiling}, we obtain a kaleidoscope tiling  with 6 squares and 8 triangles. 


\begin{figure}[h]
    \centering
    \includegraphics[scale=.35]{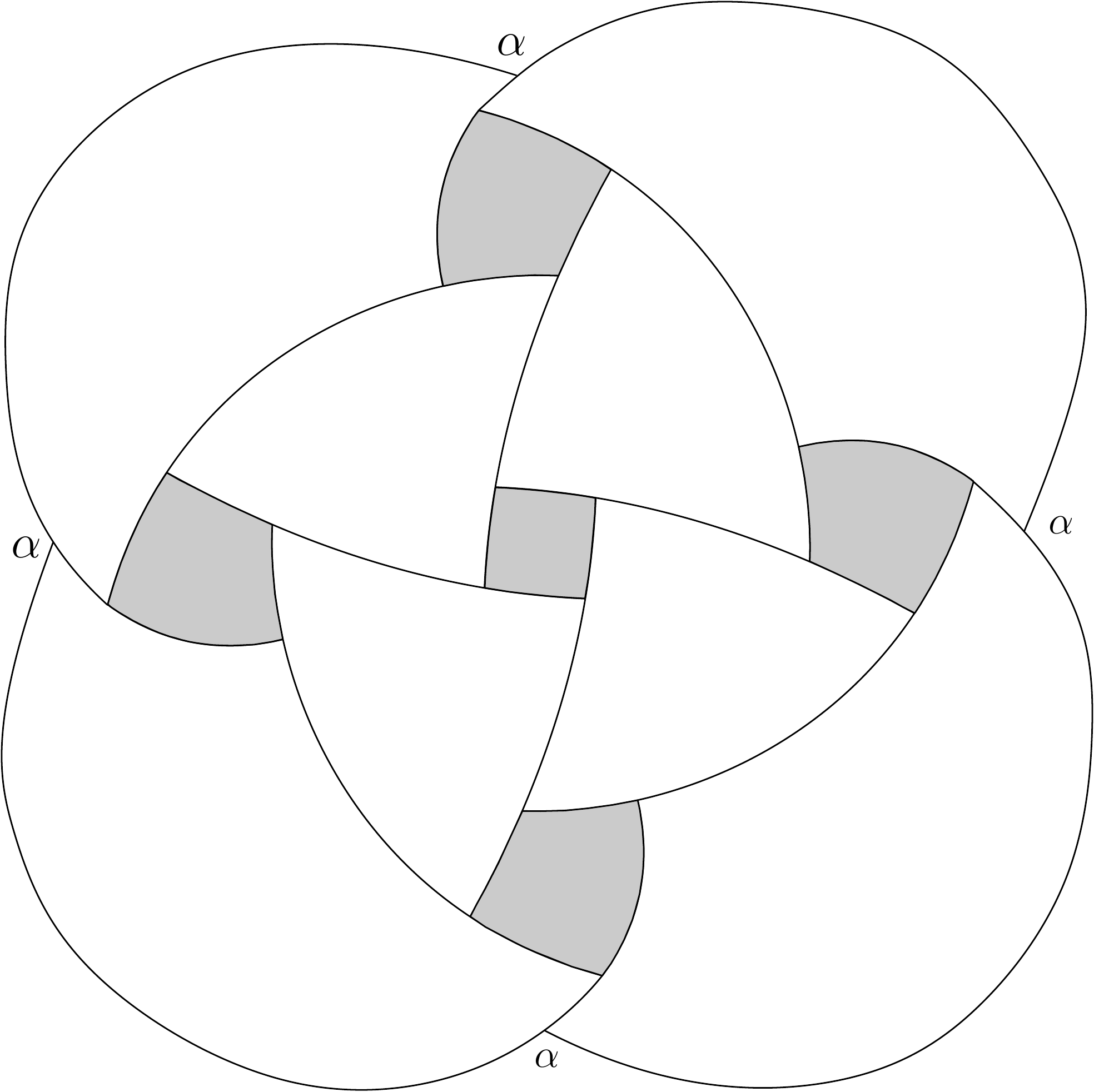}
    \caption{Adding a layer of squares to the initial patch around a singleton square yields a kaleidoscope tiling.}
    \label{centralsquarenicetiling}
\end{figure}

On the other hand, if we take the patch $\mathcal{P}$ consisting of the singleton $T$ and the four triangles surrounding it, all congruent to $T'$, and we add a triangle $T''$ to one of the crevices formed, then because $\alpha > \pi/2$ and hence $\alpha > \beta$, the new triangle is larger than the triangle $T'$. As in Figure \ref{centralsquarecontradiction}, this creates a well, which has angles $\alpha$ and base length $|T|$. Hence it must be filled with a copy of $T$. But this copy of $T$ touches both $T'$ and $T''$ along its sides,  a contradiction to Lemma \ref{singletonneighborssame}. 


\begin{figure}[h]
    \centering
    \includegraphics[scale=.3]{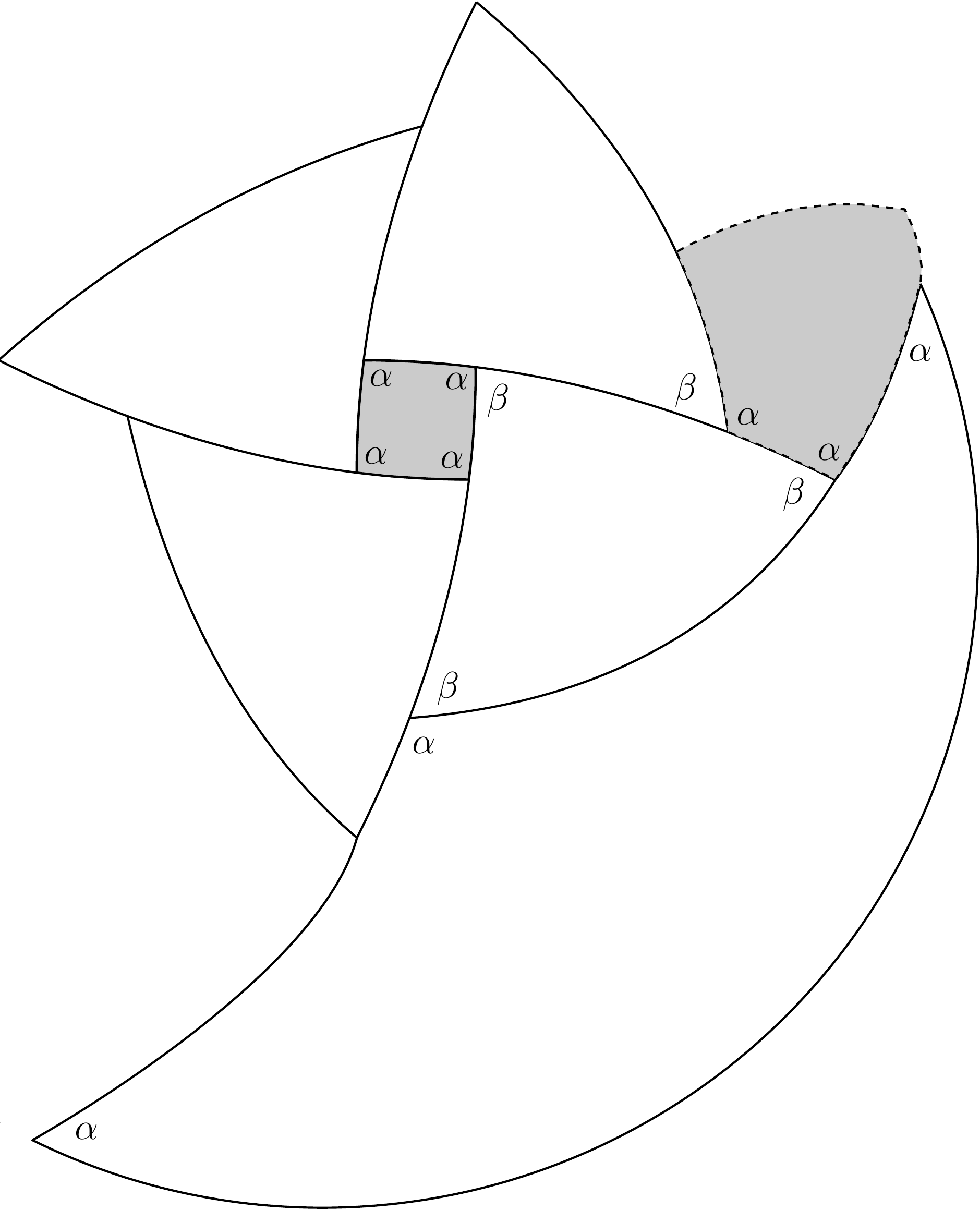}
    \caption{Adding a triangle to the initial patch around a singleton square yields a contradiction.}
    \label{centralsquarecontradiction}
\end{figure}

We now consider the case of adding a pentagon $T''$ to a crevice of the patch $\mathcal{P}$. But by Lemma \ref{sameanglepolygons},  $|T''| < |T|$, which contradicts the fact $|T|$ is minimal length.
\end{proof}

\subsection{Building from a Central Pentagon} \label{Building from a Central Pentagon}

\begin{lemma} If there is a minimum side-length prototile that only appears as a singleton pentagon, then the tiling is either a pentagon-triangle kaleidoscope tiling or a lunar tiling with polar pentagons and Type IV bigons. 
\end{lemma}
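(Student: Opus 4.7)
The plan is to mirror the structure of the previous two lemmas in this section. Set $\alpha = \angle(T)$ and $\beta = \pi - \alpha$. By Lemmas \ref{cornersmatch} and \ref{singletonneighborssame}, combined with Lemma \ref{tri.sq.pent} (which forces a triangle at every half-vertex meeting a pentagon), $T$ must be surrounded by a pinwheel of five congruent triangles $T'$, each of angle $\beta$ and side-length $|T'| > |T|$; each $T'$ shares a corner with $T$ and overhangs the next corner of $T$. Call the resulting patch $\mathcal{P}$. On its outer boundary, at each of the five vertices where consecutive pinwheel triangles meet, the overhanging side of one triangle contributes angle $\pi$ and the corner of the next triangle contributes $\beta$, so there is a crevice of angle $\alpha$; the two bounding segments have lengths $|T'|$ and $|T|$ respectively.

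Next I would classify the regular tiles that can fill such a crevice. The filler needs a corner of angle $\alpha$, and by Lemma \ref{sameanglepolygons} any regular $n$-gon of angle $\alpha$ with $n \geq 6$ has side-length strictly less than $|T|$, violating minimality. The candidates are therefore a pentagon congruent to $T$, a square of angle $\alpha$ (with side-length $|T_4| > |T|$), or a triangle of angle $\alpha$ (with side-length $|T_3| > |T_4|$). The argument then splits into two cases. If every crevice is filled by a pentagon congruent to $T$, each new pentagon is itself a minimal-length singleton, so the same reasoning requires it to be surrounded by its own pinwheel of congruent triangles $T'$. Iterating this construction outward, the nested pattern forcibly closes up in the only topologically admissible way into the pentagon-triangle kaleidoscope tiling, a one-parameter family interpolating between the icosahedral and icosidodecahedral limits. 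If instead some crevice is filled by a square or triangle of angle $\alpha$, the filler's side strictly exceeds $|T|$, overhangs the short boundary of the crevice, and creates a new half-vertex at the adjacent corner of the pinwheel triangle together with a fresh crevice of angle $\alpha$. Propagating this step around the sphere, one argues that global closure forces $\alpha$ and the filler to be precisely those of the lunar tiling claimed in the statement, with every alternative ruled out by an angle-sum contradiction or by the restricted supp-same list of Lemma \ref{suppsame}.

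The main obstacle lies in the second case, where one must trace the non-pentagon filler's propagation all the way around the sphere and establish rigidity: the chain of induced half-vertices and fresh crevices must admit exactly one consistent completion (the lunar tiling), and any other filler at any intermediate step must produce an unfillable angular gap, an overlap, or a disallowed supp-same pair. Only after this rigidity argument is complete does the claimed dichotomy between the kaleidoscope and the lunar tiling follow.
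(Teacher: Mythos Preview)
Your overall plan mirrors the paper's: set up the pinwheel $\mathcal{P}$, identify the five crevices of angle $\alpha$, and branch on what fills them. The first branch (all pentagons, leading to the kaleidoscope tiling) is fine. The gap is in the second branch, where you treat the triangle filler and the square filler as a single case to be resolved by a ``propagation and global closure'' argument. In the paper these two fillers behave completely differently, and neither is settled by global closure.

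If the filler is a triangle $T''$ of angle $\alpha$, then $|T''|>|T'|$, and placing it produces a well whose base has length exactly $|T|$. The only tile of angle $\alpha$ that fits is a pentagon congruent to $T$, so a copy of $T$ is forced into that well. But this copy of $T$ now touches both a copy of $T'$ and the non-congruent tile $T''$ along its boundary, contradicting Lemma~\ref{singletonneighborssame}. So the triangle filler is eliminated immediately and never leads toward the lunar tiling; your proposal does not isolate this contradiction.

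If the filler is a square $S$ of angle $\alpha$, the well it produces has base of length $|T'|$, not $|T|$. That well cannot be filled by a copy of $T$ (too short) or by a triangle (wrong angle), so it must be filled by a square of angle $\alpha$ and side-length $|T'|$. This forces $S$ and $T'$ to be a supp-same tri-square pair, pinning $\alpha$ to the specific value $\cos^{-1}(-1/3)$ \emph{locally}, before any propagation. Only after this rigidity is established does one iterate around the pentagon to obtain five Type~II lunes and the lunar tiling. Your sketch has the logic reversed: you propose to propagate first and read off $\alpha$ from closure, but the actual argument fixes $\alpha$ from the very first well via Lemma~\ref{suppsame}. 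Without separating the two fillers and invoking the correct well base lengths ($|T|$ versus $|T'|$), the ``rigidity'' step you flag as the main obstacle cannot be carried out as you describe.
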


\begin{proof}
If our smallest tile $T$ appears only as a singleton pentagon, then all of the surrounding tiles must be triangles by Lemma \ref{singletonneighborssame}. Call this patch $\mathcal{P}$. See Figure \ref{fig:construct} for example.

\begin{figure}[h]
\centering
\includegraphics[scale=0.4]{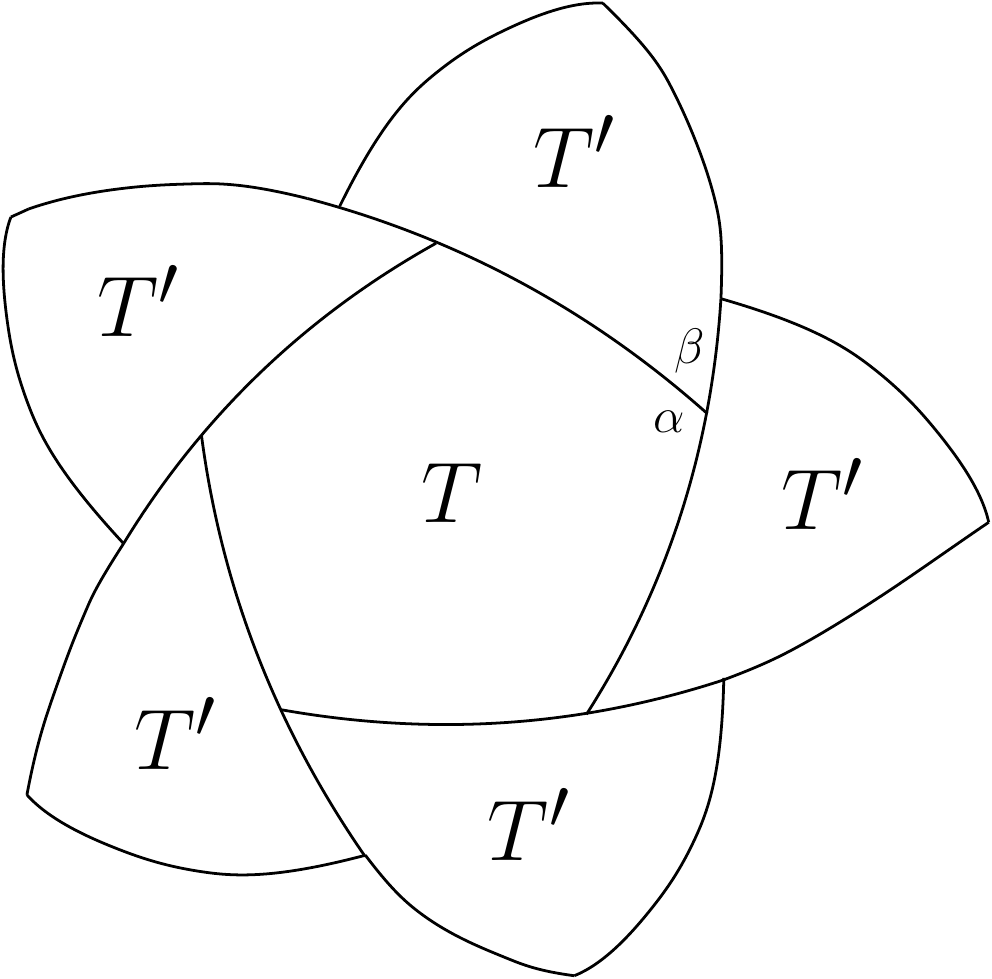}
\caption{Clockwise orientation of $T$ engulfed by copies of $T'$.}
\label{fig:construct}
\end{figure}

Building on $\mathcal{P}$, we see that we could insert another pentagon congruent to $T$ in each of the five crevices. if we did so, then since each of those must be surrounded by triangles congruent to $T'$, we would now have 15 triangles, as in Figure \ref{pentagoncentercase}(a). This creates five wells that can only be filled with pentagons congruent to $T$, as in Figure \ref{pentagoncentercase}(b). 

\begin{figure}
    \centering
    \includegraphics[scale=.55]{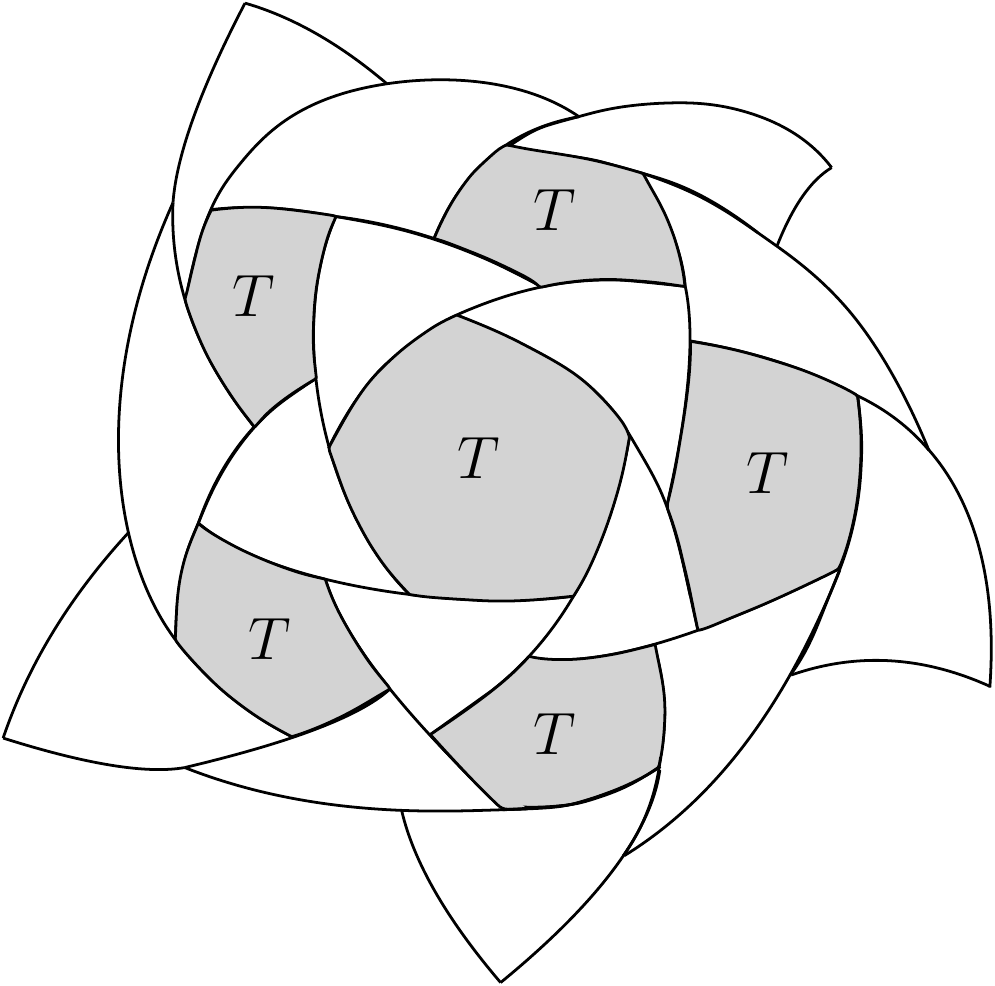} \hspace{4mm}
    \includegraphics[scale=.5]{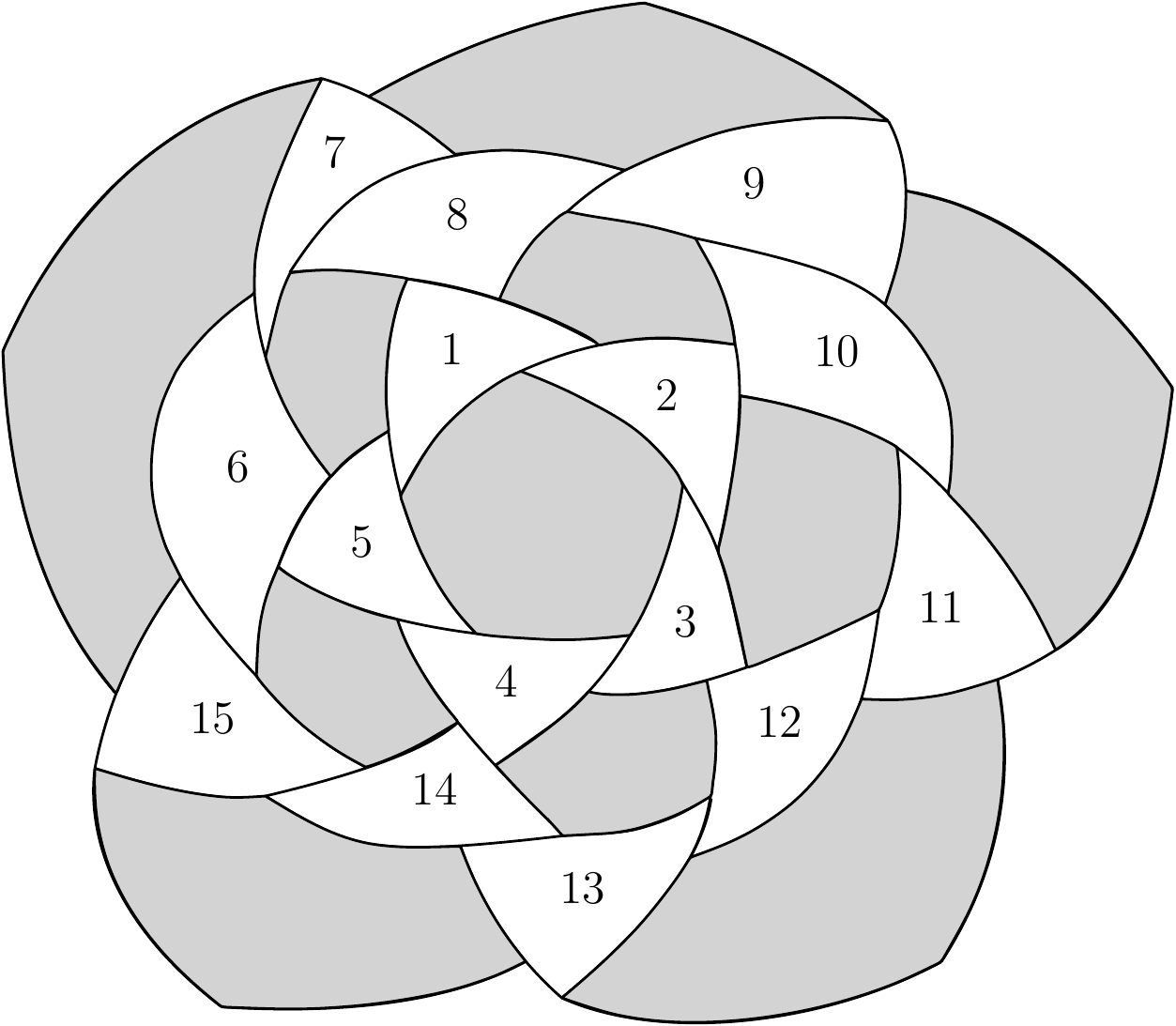}
    \caption{Adding layers of pentagons and triangles to the initial patch around a singleton pentagon.  }
    \label{pentagoncentercase}
\end{figure}

Thus we now have 11 pentagons. This leaves five wells that can only be accommodated by triangles congruent to $T'$. And finally, we have space left over for one more pentagon, yielding  a tiling as shown in Figure \ref{dodicos}. That the last pentagon cannot be itself decomposed further follows from Theorem \ref{polygondecomposition}, since the only regular pentagon that decomposes would have supplementary triangles with angle less than $\pi/3$, a contradiction.

Now, suppose instead we wish to continue tiling from the patch $\mathcal{P}$ in Figure \ref{fig:construct}, but not by adding a layer of pentagons congruent to $T$. Note that at the half-vertices created by these triangles, we may potentially place squares or larger triangles with angles equivalent to those of $T$. We begin with the case of a triangle.

So, suppose we have a triangle $T''$ with angles $\alpha$ that we place in the crevice formed by two copies of $T'$. Since $\alpha > 3\pi/5$ and $\alpha +\beta = \pi$, $\alpha > \beta$.  Since $T''$ and $T'$ are both triangles, we know that $|T''| > |T'|$. Therefore, we form a well with angle $\alpha$ between $T'$ and $T''$ that has base exactly the length of $|T|$. Thus, it must be filled with a copy of the pentagon $T$, as in Figure \ref{pentagoncenterwithtriangle}. But by Lemma \ref{singletonneighborssame}, since this copy of $T$ has two copies of $T'$ on its boundary, all the tiles on its boundary must be copies of $T'$, which contradicts the fact it touches $T''$ on its boundary.

\begin{figure}
    \centering
    \includegraphics[scale=.5]{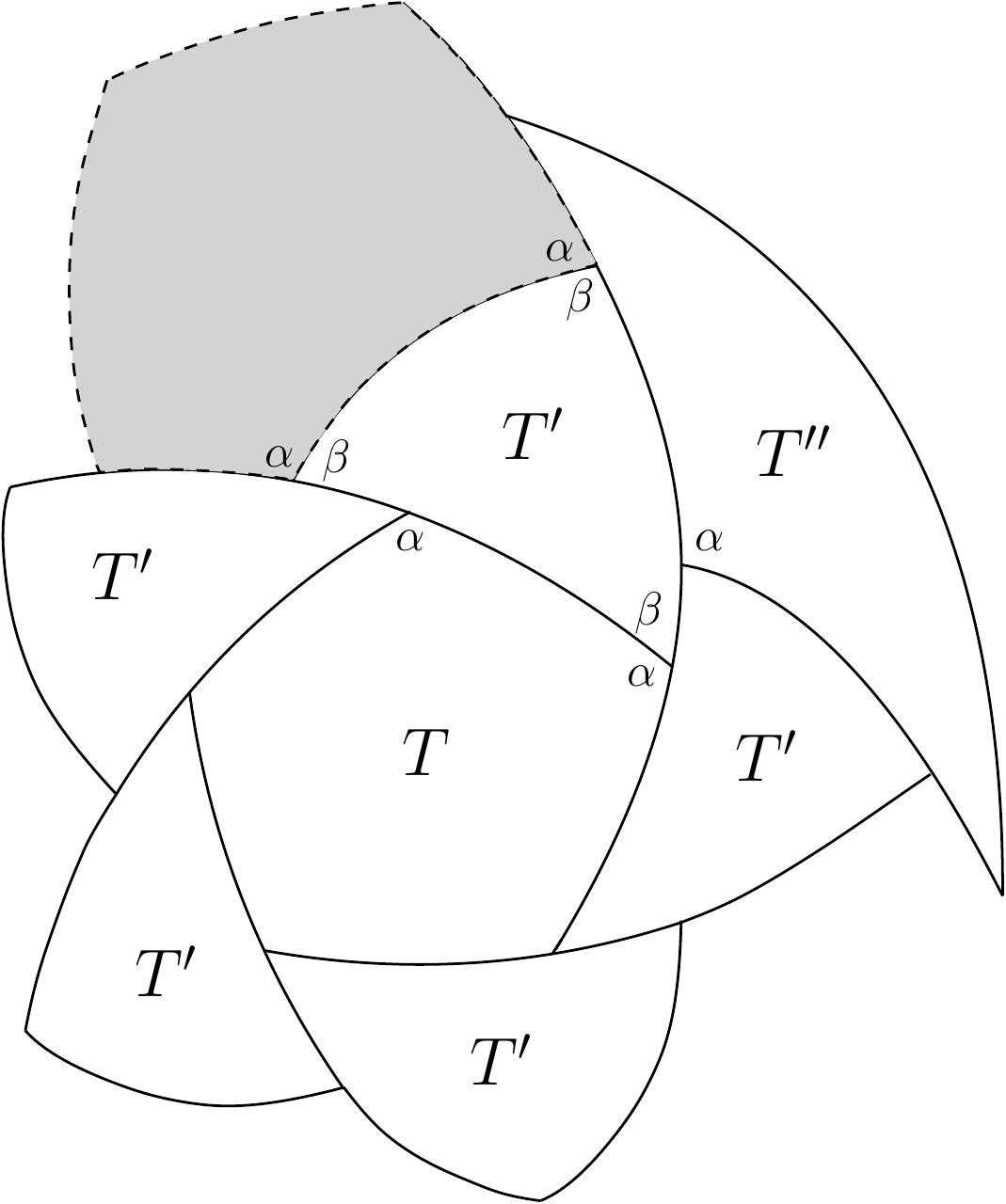}
    \caption{Adding a triangle to the first layer of triangles around a central pentagonal singleton leads to a contradiction.}
    \label{pentagoncenterwithtriangle}
\end{figure}


We now move to the case of placing a square $S$ with angle $\alpha$ into one of the crevices of $\mathcal{P}$. Note that $|S| > |T|$ by Lemma \ref{sameanglepolygons}, since they are a square and pentagon with the same angle measure. Thus, the addition of $S$ generates a well, as in Figure \ref{pentagonwithsquare}, with angles $\alpha$ and base of length $|T'|$. Since $|T| < |T'|$, this cannot be filled with a copy of $T$. Since $\alpha \neq \beta$, it cannot be filled with a triangle. Therefore, it must be filled with a square that has angle $\alpha$ and side-length the same as $T'$. By Lemma \ref{suppsame}, this means that $T'$ and $S$  are a tri-square supp-same pair of angles approximately $70.528^{\circ}$ and $109.472^{\circ}$. This forces $T$ to have angle $109.472^{\circ}$ as well.

As we move clockwise, each square added on forces another well and another square of the same size. We now have a non-edge-to-edge patch with a pentagon, triangles, and squares, as shown in Figure \ref{pentagontrianglessquares}.   

\begin{figure}
    \centering
    \includegraphics[scale=.45]{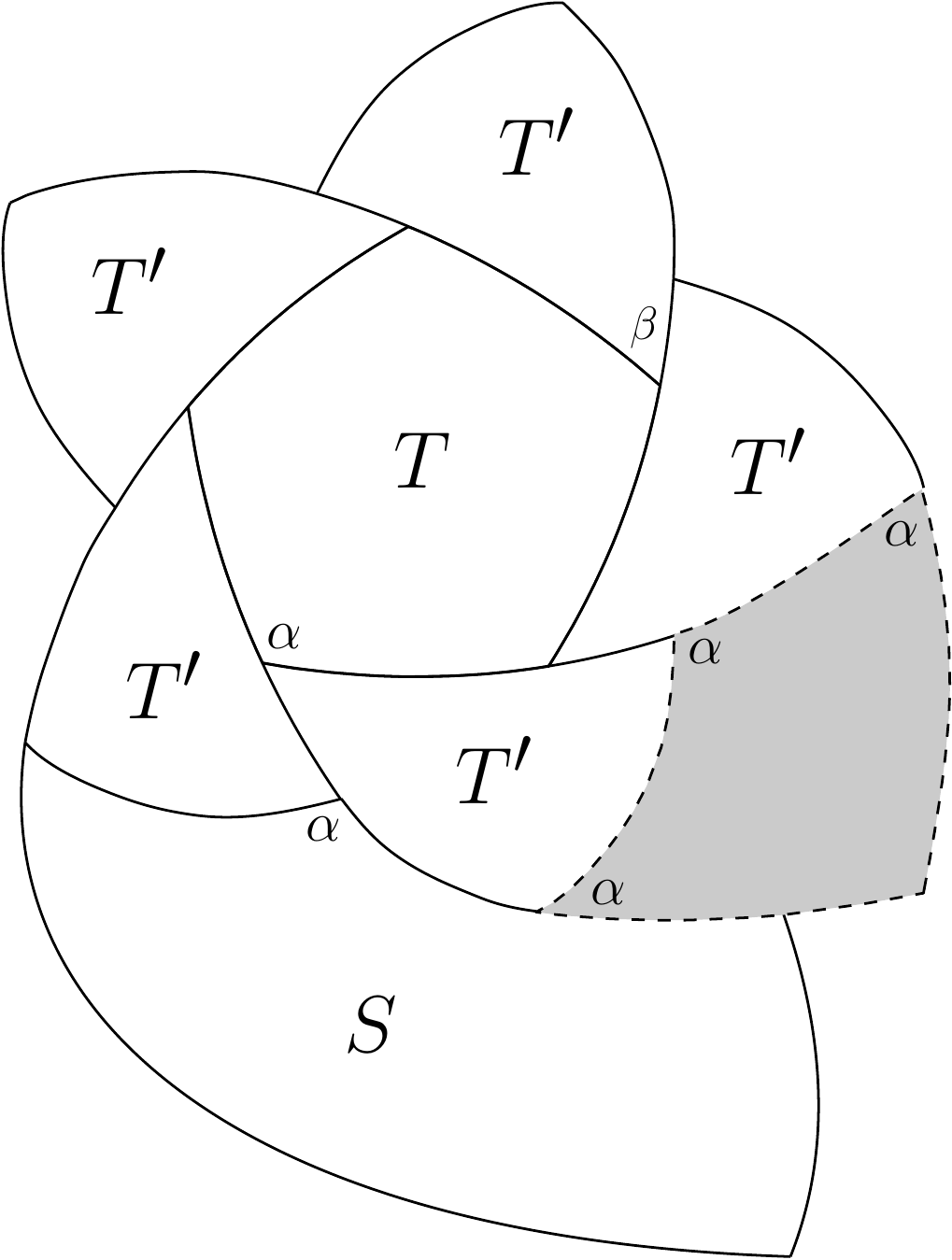}
    \caption{Adding a square to the first layer of triangles around a central pentagonal singleton forces further squares.}
    \label{pentagonwithsquare}
\end{figure}



\begin{figure}
    \centering
    \includegraphics[scale=.43]{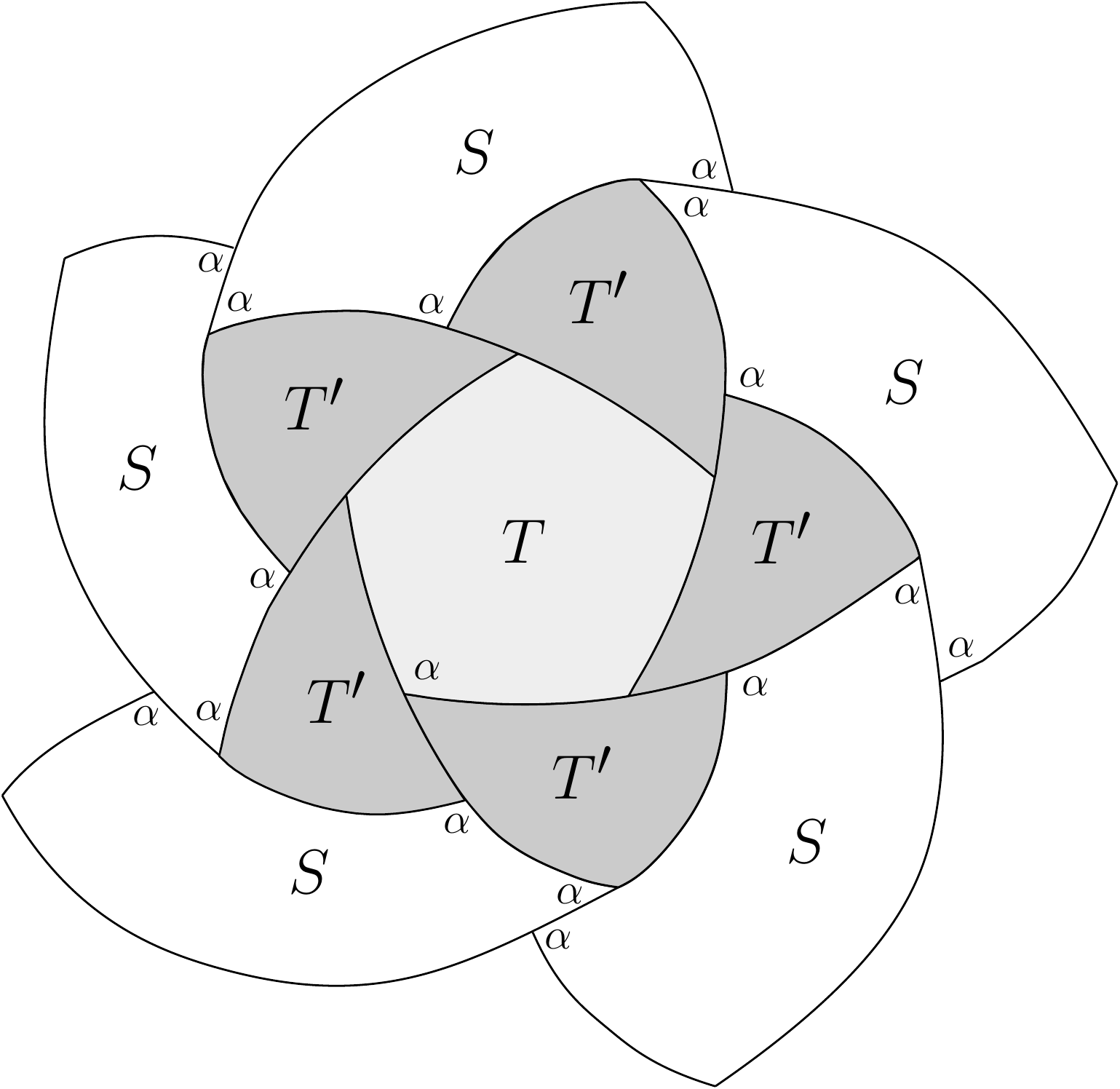}
    \caption{Adding a square to the initial patch around a singleton pentagon forces all squares around the patch.}
    \label{pentagontrianglessquares}
\end{figure}

Continuing to tile outward, we see that we must place more copies of $T$ in the crevices formed by copies of $S$, as we form half vertices using a square, and $T$ is the triangle which supplements $S$. We note, then, that we have surrounded our pentagon $T$ with edge-to-edge patches of triangle-square-triangle, which are the Type II lunes that appear in the cuboctahedral tiling. 

There remains a regular pentagonal slot to fill with angle exactly the same as the initial pentagon. 
By Theorem \ref{polygondecomposition}, $T$ cannot be decomposed further into regular polygons, so the only possible result is the lunar tiling with polar pentagons. 
\end{proof}

We have thus completed our classification of tilings with protoset containing  a prototile of least side-length that only appears as a singleton of minimal side-length.

\section{Tiles of minimal side-length that are not singletons} \label{bigonsection}

We now consider the case that a minimal side-length tile in a tiling appears in an edge-to-edge patch of more than one tile. Such patches can take a variety of shapes. But we will make a change to the tilings to simplify the possibilities. As mentioned in Section 1, we define a regular triangle $T$ with side-length $3\pi/5$ to be a {\bf magic triangle}. It has  $\angle(T) \approx 116.565^\circ$, which is the angle of the pentagon in the special tri-pent supp-same pair. It is magic in the sense that it can be decomposed into three pentagons and four triangles from the tri-pent supp-same pair, as in Figure \ref{magictriangle}. This triangle can be observed as a union of tiles from the icosidodecahdron.

If a tiling has such a magic triangle within it, we decompose the triangle into the four triangles and three pentagons. We call a tiling {\bf decomposed} if all magic triangles within it have been decomposed. Then we will show that any non-singleton edge-to-edge patch of minimal side-length tiles in such a tiling must be a bigon. After we have classified all of the tilings that result, we will return to the question of when tiles can be re-composed into magic triangle tiles, generating additional non-edge-to-edge tilings.
So from now on, in this section, all tilings are decomposed.

\begin{lemma} \label{minimaledgepatch}Let $\mathcal{P}$ be a maximal edge-to-edge patch of more than one tile, all of minimal side length $L$ among the tiles in a decomposed tiling $\mathcal{T}$. Then $\mathcal{P}$ forms a bigon. 
\end{lemma}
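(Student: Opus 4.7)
The plan is to analyze the boundary of $\mathcal{P}$ and show that it consists of at most two great-circle arcs meeting at antipodal points, so that $\mathcal{P}$ is a bigon. First, I would exploit the maximality of $\mathcal{P}$: any tile $T_{\text{out}}$ outside $\mathcal{P}$ that shares part of a common side with a tile of $\mathcal{P}$ must have $|T_{\text{out}}| > L$, since otherwise $T_{\text{out}}$ would share a full edge with a tile of $\mathcal{P}$ and would belong to $\mathcal{P}$ by maximality. Hence each boundary side $s$ of $\mathcal{P}$ (of length $L$) lies strictly inside a longer side $s_{\text{out}}$ of some exterior tile, and the endpoints of $s$ are tiling vertices lying along $s_{\text{out}}$.

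Next, I would classify each boundary vertex $v$ of $\mathcal{P}$ as \emph{straight} (interior angle of $\mathcal{P}$ equal to $\pi$) or a \emph{turn} (interior angle different from $\pi$). At a straight vertex, the tiles of $\mathcal{P}$ at $v$ must have angles summing to $\pi$; since every regular $n$-gon angle with $n \geq 3$ exceeds $\pi/3$, exactly two such tiles meet at $v$, and they form a supp-same pair. By Lemma \ref{suppsame} they must be a tri-tri, tri-sq, or tri-pent pair of special tiles, forcing $L \in \{\pi/2,\pi/3,\pi/5\}$ and restricting all tiles of $\mathcal{P}$ near the boundary to be special. At a turn vertex the angles of tiles of $\mathcal{P}$ at $v$ do not sum to $\pi$, so the boundary turns. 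Concave turns (interior angle exceeding $\pi$) can be eliminated by a maximality argument: the small exterior angle there must accommodate adjacent external tiles of side-length greater than $L$, and a direct computation using the lower bound $\pi/3$ on tile angles shows this is impossible.

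The crux is to show that $\mathcal{P}$ has at most two (necessarily convex) turn vertices. Suppose for contradiction that $\mathcal{P}$ has $k \geq 3$ convex corners; then $\mathcal{P}$ is a spherical polygon with $k$ convex corners separated by geodesic sides and tiled edge-to-edge by special tiles of side-length $L$. Appealing to the appendix classification of regular-polygon decompositions (Theorem \ref{polygondecomposition}) together with the rigidity of supp-same pairings, I would argue that the only three-cornered such region is a magic triangle (three pentagons and four triangles from the tri-pent pair, as in Figure \ref{magictriangle}), and no region with four or more corners is realizable. Since $\mathcal{T}$ is decomposed, magic triangles do not appear as edge-to-edge patches in $\mathcal{T}$, giving the desired contradiction.

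Finally, with at most two convex corners, the boundary of $\mathcal{P}$ consists of at most two geodesic arcs. When there are two arcs, they lie on two great circles that intersect at the two corners, so the corners are antipodal and $\mathcal{P}$ is a bigon. When there are no corners, the boundary is a single great circle and $\mathcal{P}$ is a hemisphere, which is also a bigon in our conventions. The principal obstacle is the corner-counting step: ruling out polygonal patches of special tiles with three or more convex corners requires careful case analysis using Theorem \ref{polygondecomposition}, and it is precisely here that the decomposition hypothesis is essential, since the magic triangle is the exceptional three-cornered patch that would otherwise violate the conclusion.
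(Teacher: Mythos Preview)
Your outline has the right overall shape (boundary vertices are either $\pi$-vertices forcing supp-same pairs, or genuine corners), but the crux step---reducing to at most two corners---does not go through as written. Theorem~\ref{polygondecomposition} classifies decompositions of \emph{regular} polygons, whereas a maximal edge-to-edge patch $\mathcal{P}$ of special tiles need not be regular: its sides can have different tile-lengths and its corner angles can differ. In the tri-pent case the paper in fact enumerates \emph{five} candidate convex patches (Figure~\ref{penttripatches}), only one of which is the decomposed magic triangle; the others (e.g.\ the three-tile patch triangle--pentagon--triangle with sides of tile-lengths $2,2,3$, and a six-tile patch with all sides of tile-length $2$) are not regular and are not excluded by Theorem~\ref{polygondecomposition}. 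So ``the only three-cornered such region is a magic triangle'' is false.

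More fundamentally, you have misidentified where the decomposition hypothesis enters. A \emph{decomposed} tiling forbids magic triangles as single \emph{tiles}; it does not forbid the seven-tile decomposed magic triangle from appearing as an edge-to-edge patch. The paper explicitly allows this patch internally and then rules it out \emph{externally}: the real work is showing that none of the non-bigon candidate patches admits a consistent arrangement of larger tiles along their boundary. This is the ``perfect fit'' analysis (Claims~4a--4c in the paper's proof), where one computes the possible side-length combinations of exterior tiles along each side of $\mathcal{P}$ and shows they never match $m\pi/5$ (or $m\pi/3$, etc.) while also fitting around the corners. The decomposed hypothesis is used there to exclude a magic triangle appearing as an \emph{exterior} tile in a perfect fit. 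Your proposal contains no analogue of this external analysis, and without it the non-bigon patches cannot be eliminated. Similarly, your elimination of concave corners by ``a direct computation using the lower bound $\pi/3$'' is too coarse; in the paper concave corners are ruled out case by case via the same perfect-fit length arithmetic.
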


\begin{proof}  We consider the collection consisting of all tiles outside the patch $\mathcal{P}$ that share an edge with the patch. We build it one tile at a time. Note that because the tiles in the patch have minimal side-lengths, all added tiles will overhang the side they are glued along on at least one end. Note that there can be more than one component of $\partial \mathcal{P}$, and boundary components can touch one another or themselves at vertices. When they do so, we split the boundary open at the vertex, as in Figure \ref{touchingboundary} for the purpose of making arguments about the boundary, temporarily ignoring a tile in $\mathcal{P}$ that touches at a single vertex as we run along a portion of a boundary component of $\mathcal{P}$. 

\begin{figure}
    \centering
    \includegraphics[scale=.45]{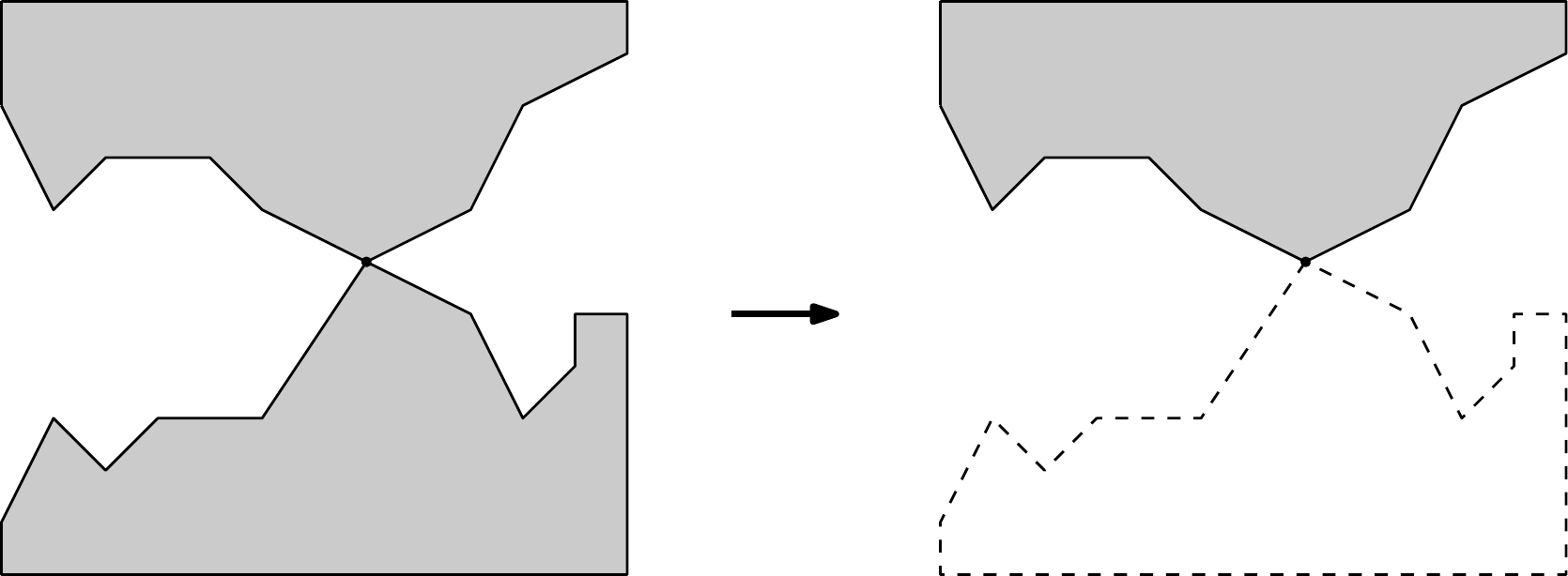}
    \caption{Splitting an edge-to-edge patch at a vertex where boundary touches.}
    \label{touchingboundary}
\end{figure}

Consider the vertices on the boundary of $\mathcal{P}$, including in this set the points at the corners of all of the tiles in $\mathcal{P}$ with sides on the boundary of $\mathcal{P}$. At each such vertex, 
the angle of $\mathcal{P}$ is either less than, equal to or greater than $\pi$. If there is a vertex with angle $\pi$, then there must be exactly two tiles in the patch involved in that vertex, and they are a supp-same pair. We call a vertex of the boundary of $\mathcal{P}$ where the angle is $\pi$ a {\bf $\pi$-vertex}. 
\medskip

\noindent {\bf Claim 1.} Every tile in $\mathcal{P}$  with side on $\partial \mathcal{P}$ is in a $\pi$-vertex on $\partial \mathcal{P}$.


\medskip

Let $T$ be a tile in $\mathcal{P}$ with a side in  $\partial \mathcal{P}$. If both vertices at each end of the side of $T$ have angle in $\mathcal{P}$ greater than $\pi$, this creates a well that can only be filled with a tile of the same length as $T$, contradicting the fact this side of $T$ was on the boundary of the maximal edge-to-edge patch.

If one vertex has angle greater than $\pi$, and one, denoted $v$, has angle less than $\pi$  then the tile $T'$ outside $\mathcal{P}$ glued onto this side of $T$ will overhang at the vertex $v$. Since we cannot have three corners of tiles coming together to form an angle of $\pi$, the only option is that there is exactly one tile $T''$ in the resulting crevice, and it must be supplementary to $T$. If it is supp-same with $T$, then it is part of $\mathcal{P}$, and this is a $\pi$-vertex, contradicting the fact this vertex has angle less than $\pi$. If it is not supp-same with $T$, then it is not part of $\mathcal{P}$, and its side-length must be larger than $L$. It overhangs at the next corner $v'$ of $T$. Repeat the process and we either have a supp-same pair on the boundary of $\mathcal{P}$ involving $T$, or we overhang the next corner of $T$. If we do not hit a supp-same pair on $\partial \mathcal{P}$, we eventually create a well, in one step if $T$ is a triangle, in two steps if $T$ is a square and in three steps if $T$ is a pentagon. That well can only be filled with a supp-same tile that will also be in $\mathcal{P}$, and that tile will have side in $\partial \mathcal{P}$, yielding the $\pi$-vertex on $T$.

If both corners have angle less than $\pi$, then at least one of them is overhung by a tile $T'$ glued to this side of $T$. The subsequent crevice is either filled with a supp-same tile in $\mathcal{P}$, and we are done, or it is filled with a tile of side-length greater than $L$ which is not in $\mathcal{P}$. As in the previous case, it overhangs the next corner of $T$. Repeating this process until we come around the boundary of $\mathcal{T}$, we either come back to the starting corner of $T$ in which case the entire patch $\mathcal{P}$ is a single tile, a contradiction, or we come to another tile in $\mathcal{P}$ that is supp-same on the boundary of $\mathcal{P}$ with $T$.

\medskip

\noindent {\bf Claim 2.} All tiles in $\mathcal{P}$ with sides on $\partial \mathcal{P}$ are special tiles that are either all tri-tri tiles or all tri-square tiles or all tri-pent tiles.

\medskip

This follows immediately from Claim 1, since all edges in the patch must have the same side-length. But we know that the side-lengths of the special tiles in each of the three supp-same cases are distinct. 

\medskip






We now consider the polygonal curve or curves that form the boundary of $\mathcal{P}$. For a side of that polygonal curve (now only treating as corners those points with angle not equal to $\pi$) we define its {\bf tile length} to be the number of sides of tiles in $\mathcal{P}$ that make it up. We call the set of tiles outside $\mathcal{P}$ that glue to a given side of $\mathcal{P}$ a {\bf perfect fit} if their total length matches that of the side of $\mathcal{P}$, and thereby causes no overhang at either end.

\medskip

\noindent {\bf Claim 3:} If there are no perfect fits on sides of $\mathcal{P}$, then $\mathcal{P}$ is a bigon.

\medskip

We first prove that in this case, all corners of $\mathcal{P}$ are convex and each corresponds to a corner of one triangle tile in $\mathcal{P}$.


Given that there are no perfect fits, we can start with any side and on at least one end of that side there will be overhang. The crevice formed must be filled by a tile that has angle supplementary to the angle of the corner. Hence, the corner on $\mathcal{P}$ can only contain one tile. If that corner tile is a square or pentagon, the only tile that can be supplementary to it is a triangle. But since the square or pentagon is special, that triangle would have exactly the same side-length as the corner tile, contradicting the fact that it is not in $\mathcal{P}$. So the corner tile in $\mathcal{P}$  must be a triangle. The next corner in this direction around the boundary of $\mathcal{P}$ must also be overhung, and the same argument shows that corner is made up of a single triangle. We continue in this way, showing all corners in $\partial \mathcal{P}$ must be made up of a single triangle in $\mathcal{P}$. In particular, no corners can be concave.

\medskip

Now we consider each type of supp-same pair separately. 

\medskip

If we have a patch constructed from tri-tri supp-same tiles on its boundary, then start at a corner. This must be a triangle of angle $\pi /2$.  Gluing a second triangle in $\mathcal{P}$ to its non-exposed side yields a bigon. If there is another triangle glued onto this one in $\mathcal{P}$, we create a crevice of angle $3\pi /2$ which must be filled with another triangle of angle $\pi/2$, and we obtain a hemisphere for $\mathcal{P}$, which counts as a bigon with no corners. If there is another triangle glued to the hemisphere, we obtain another crevice of angle $\pi /2$ which is again filed with a congruent triangle. This yields a  bigon with apex angle $3\pi /2$, which yields corners with three triangles, which we have showed cannot occur for a patch with no perfect fits. So both crevices would be filled with congruent triangles yielding an edge-to-edge tiling of the entire sphere.  So the only options for $\mathcal{P}$ are a bigon of apex angle $\pi/2$ and a hemisphere, which also counts as a bigon.

\medskip

If we have a patch $\mathcal{P}$ constructed from tri-square supp-same tiles on the boundary, then again start at a corner, which is a triangle $T_1$ of side-length $\pi/3$. It must be glued to one other tile in the patch, which must be a square $T_2$ by Claim 2. Because there can be no angles at corners of the boundary of $\mathcal{P}$ greater than $\pi$, neither of the adjacent sides of the square can have a tile from the patch glued on. So both sides appear in the boundary of $\mathcal{P}$. There must be another tile from $\mathcal{P}$ glued onto the remaining side of $T_2$ since $T_2$ cannot appear at a corner of $\mathcal{P}$. This must be a triangle $T_3$ to prevent a corner on $T_2$. Neither exposed side of $T_3$ can have a tile from $\mathcal{P}$ glued to it, as this would create a corner of angle greater than $\pi$. Thus the entire patch is a bigon with two triangles and one square.

\medskip

If we have a patch constructed from tri-pent supp-same tiles on the boundary, then again start at a corner, which is a triangle $T_1$ of side-length $\pi/5$. It must be glued to one other tile in the patch, which must be a pentagon $T_2$ by Claim 2. Because there can be no angles greater than $\pi$, neither of the adjacent sides of the pentagon can have a tile from the patch glued on. So both sides appear in the boundary of $\mathcal{P}$. In order to prevent a corner involving a corner of the pentagon, there must be triangles $T_3$ and $T_4$ glued to each of the remaining exposed sides of $T_2$. This generates a crevice with angle available for a new pentagon $T_5$. That pentagon must be present in the patch because otherwise we would have a corner of angle greater than $\pi$. Neither of the adjacent exposed sides of the pentagon can have a tile in $\mathcal{P}$ glued on as again, this would generate an unacceptable corner. But to avoid any other unacceptable corners on the pentagon $T_5$, the remaining side must have a triangle glued onto it. Neither of the exposed sides of this triangle can have a tile from the patch glued on as this would also generate an unacceptable angle. Thus, we have generated a bigon with two pentagons and four triangles, and this is the entire patch.

\medskip

We  define a perfect fit to be a {\bf supplementary perfect fit} if the angle of at least one end of the perfect fit tiles is supplementary to the angle at that corner of the patch. 

\medskip


\noindent {\bf Claim 4:} If there is at least one perfect fit on a side of $\mathcal{P}$, then $\mathcal{P}$ is a bigon.

\medskip

We prove this via a series of claims.

\medskip

\noindent {\bf Claim 4a:} There are no supplementary 
perfect fits on the sides of $\mathcal{P}$.

\medskip

We first consider the tri-tri supp-same patch. if there were a supplementary perfect fit, the tile in the perfect fit that is supplementary would have to have angle $\pi/2$, and therefore be a triangle in $\mathcal{P}$, a contradiction. 


Now consider the tri-square supp-same patch. Suppose there is a supplementary
perfect fit. Then the collection of tiles in the perfect fit must have length $m \pi/3$ where $1 \leq m \leq 5$. 

A corner of the patch that could have a supplementary angle for the perfect fit is either made up of a single triangle or a single square since three tiles cannot fit together at their corners to make an angle of $\pi$.  If the tile is a single square, then the angle to fill to make them supplementary is $\cos^{-1}(1/3) \approx 70.529^\circ$, which can only be filled with a triangle, and that triangle has side length $L$, a contradiction to it not being in the patch. So the only possibility is that the corner in the patch consists of one triangle, and the tile in the perfect fit must be a triangle of angle $\cos^{-1}(-1/3) \approx 109.471^\circ$. The subsequent tiles in the perfect fit would have to be supplementary to one another. So the next tile in the perfect fit would be a triangle of angle $70.529^\circ$ and side-length $L = \pi/3$. In order to avoid collision between its two neighbors in the perfect fit, the next tile, if it exists, must be a square of side-length $L$. It cannot be a pentagon, as such a pentagon would have side-length less than $L$, which is the minimum over all tiles in the tiling. 

The subsequent tile, if it exists, must be a triangle, also of side-length $L$. If there is a fifth tile, then it cannot be a triangle, as a triangle of angle $109.471^\circ$ has length $cos^{-1} (-1/4) \approx.5804 \pi$, and the first triangle would overlap with it, since both lie on the boundary of the bigon of length $\pi$ formed by the second, third and fourth tiles. 

The fifth tile cannot be a pentagon, as it would have side-length less than $L$. So it must be a square with side length exactly $L$. This forces the sixth possible tile in the perfect fit to be a triangle of side-length $L$. 

Thus, we have seen that the length of the perfect fit is $(.5804 + p/3) \pi$, which does not equal $m\pi/3$ for any $2 \leq m \leq 6$ and $1 \leq p \leq 5$. So there are no supplementary perfect fits in this case.


\medskip




Now we consider a  minimal side-length edge-to-edge tri-pent supp-same patch in a decomposed tiling. If there is a supplementary perfect fit, then the tiles in the perfect fit must have total length $m \pi/5$ for  some $2 \leq m \leq 10$. A corner of the patch that could have a supplementary angle for the perfect fit is either made up of a single triangle or a single pentagon since again, three tiles cannot fit together to make an angle of $\pi$.  If the tile is a single pentagon, then the angle to fill to make them supplementary is $63.434^\circ$, which can only be filled with a triangle, and that triangle has side length $L$, a contradiction to it not being in the patch. So the only possibility is that the corner in the patch consists of one triangle. The supplementary tile in the perfect fit cannot be a pentagon, as it would then be of side-length $L$, and would be in $\mathcal{P}$, a contradiction. It cannot be a  triangle, as if it were, it would be a magic triangle, which has been decomposed. So it must be a square.

     A square of angle $116.566^\circ$ has side length approximately $.3752 \pi$.
The subsequent tiles in the supplementary perfect fit must be supplementary to one another. So the next tile in the perfect fit, if it exists, must be a triangle of side-length $L = \pi/5$. In order to avoid collision between its two neighbors in the perfect fit, the next tile, if it exists, must be a pentagon of side-length $L$. The subsequent tile, if it exists, must be a triangle, also of side-length $L$. The next tile could either be a square or pentagon of angle $116.566^\circ$. Then we repeat the pattern of tiles. The length of the perfect fit then must be equal to $j (.3752\pi) + k (.2 \pi)$ for $j \geq 1$ and $k \geq 1$.  But there is no combination of $j (.3752\pi) + k (.2 \pi)$ for $j \geq 1$ and $k \geq 1$ that is equal to $m (.2 \pi)$ for $2 \leq m \leq 10$. Thus, there are no supplementary perfect fits in the tri-pent case.


\medskip

\noindent {\bf Claim 4b:} If there are any perfect fits along the sides of $\mathcal{P}$, then  all sides must be perfect fits.

\medskip

If not, then take a side of $\mathcal{P}$ that is not a perfect fit. Then there is overhang at at least one of its two vertices. Continue around the boundary of $\mathcal{P}$, filling in sides, and in each case generating a new overhang until we come to a side that is a perfect fit. Since the perfect fit cannot be supplementary, either the overhang and the perfect fit collide, or if not, there is not enough angle between the perfect fit and the overhang to fill the resulting crevice, since we can never have three corners of tiles adding to angle $\pi$.

\medskip

\noindent {\bf Claim 4c:} If all sides are perfect fits, then the patch must be a bigon.

\medskip

We  subdivide into the cases corresponding to the choice for a supp-same pair.

\medskip

\noindent {\bf Case 1:} If the tiles in $\partial \mathcal{P}$ are all tri-tri special tiles, then they each have angle $\pi/2$ and side-length $\pi/2$.  
Since these triangles are the smallest triangles in the tiling, the only possible perfect fit would be a square that has angle $\pi$, and that forms a hemisphere. But this can only occur along one side of the patch. It would cause an``overhang'' on both ends of the side. When we follow this overhang around the boundary of $\mathcal{P}$ in one direction we would run into the other overhang. So the only possibility for a perfect fit is if the patch is also a hemisphere corresponding to half of the octahedron. But a hemisphere is a bigon.


 \medskip
 
\noindent {\bf Case 2:} If the tiles in $\partial \mathcal{P}$ are all tri-square special tiles, then they have side-length $\pi/3.$ 
Each side of $\mathcal{P}$ has length $m \pi/3$ for $1 \leq m \leq 6$.

If  $ m=1$, then the perfect fit would itself be a tile of length $\pi/3$, which would contradict the fact it is not in the patch. If $m=2$, then the two tiles from $\mathcal{P}$ that are in the side are a triangle and a square.  The perfect fit must be a single tile glued to this side of length $2\pi/3$. The only such tile is a triangle that forms a hemisphere, with angles $\pi$. As a hemisphere, it overhangs both of the endpoints of the side of the patch. At the endpoint corresponding to the square, the resulting crevice must be filled with a supplementary tile of greater length. But the only option is a supp-same triangle, a contradiction to the fact this tile is not part of $\mathcal{P}$.

So all sides of $\mathcal{P}$ must have tile length three or greater. But then two adjacent sides are length $\pi$ or greater which means they intersect again on the far side of the sphere. Hence they cannot be greater than $\pi$, and we have a bigon.





In the case there are no corners, so  $m = 6$, the patch is an entire hemisphere, which is a bigon and we are done.


\medskip

      
\noindent {\bf Case 3:} If the supp-same pair is a triangle and pentagon, they each have side-length $\pi/5$, and any side of the patch $\mathcal{P}$ has length $m\pi/5$ for $m = 1, \dots, 10$. There cannot be a side corresponding to $m=1$ as a perfect fit on such a side would require the tile in the perfect fit to be in $\mathcal{P}$, a contradiction. 

We first consider the angles at the corners of $\partial \mathcal{P}$. We show that none of the angles are concave. Suppose first that two pentagons and a triangle from $\mathcal{P}$ meet at a corner. Then the crevice formed has angle $63.471^\circ$ and the only tile that fits is a triangle that would be in $\mathcal{P}$, a contradiction. 

Suppose next that two triangles and a pentagon from $\mathcal{P}$ meet at a corner of $\partial \mathcal{P}$. Then the crevice has angle $116.566^\circ$. This is not large enough for two tiles, so it must be filled with a single tile. It cannot be filled with a triangle, as such a triangle would be a magic triangle, all of which have been decomposed. It cannot be a pentagon, as such as pentagon would be in $\mathcal{P}$. So the only possibility is that it is a square. But a square of angle $116.566^\circ$ has side-length of $.3752\pi$.  Since this tile is at the corner of two sides of $\mathcal{P}$, it is in two perfect fits. Considering one of them, if there is a second tile in the perfect fit, it is supplementary to it and therefore must be a triangle of angle $63.472^\circ$. The subsequent tile in the perfect fit, if it exists, must be a special pentagon, so as not to overlap with the square. The next tile, if it exists,  must again be a triangle of angle $63.472^\circ$. The subsequent tile, if it exists, must be a special pentagon, so as not to overlap with the first square. The subsequent tile, if it exists, must again be a triangle. The subsequent tile, if it exists, could either be a square or a pentagon, and then the sequence starts over. In particular, the length of the side of the perfect fit tiles is $ j(.3752\pi) + k (.2\pi$) for $1 \leq j \leq 2$ and  $0 \leq k$. But this cannot equal $m\pi/5$ for $2 \leq m \leq 10$. Thus, there are no such corners on $\partial \mathcal{P}$.

Suppose that two pentagons from $\mathcal{P}$ make up a corner of $\partial \mathcal{P}$. Then the crevice has angle $126.868^\circ$. There cannot be two tiles filling this crevice as if there were, one would have angle less than or equal to $63.472^\circ$, which would imply its length is either $L$ or less than $L$ contradicting the fact it is not in $\mathcal{P}$ or cannot have side-length less than $L$. 

Let $T$ be the single tile filling the crevice of angle $126.868^\circ$. But such a tile cannot have a supplementary tile since the supplementary angle is too small. So there can only be one tile in this perfect fit.
It is either a triangle, square or pentagon. The side-lengths of these three are respectively $.6223 \pi$, $.4195 \pi$ and $.2805 \pi$, so they are not equal to $m \pi/5$ for $2 \leq m \leq 10.$

Thus all corners are convex. This leaves only three possibilities. A corner of the boundary of $\mathcal{P}$ could consist of a triangle from $\mathcal{P}$, a pentagon from $\mathcal{P}$ or two triangles from $\mathcal{P}$. 

We now consider the possible edge-to-edge patches, knowing the corners are convex and knowing that every side of $\mathcal{P}$ contains at least two tiles. We show there are five possibilities.

First suppose that there is a corner $v$ in $\mathcal{P}$ consisting of a single triangle $T$. Since the two sides of $\mathcal{P}$ meeting at $v$ must contain at least two tiles, there must be a pentagon $P$ glued to the third side of $T$. In order to avoid a side of tile length 1, there must be a triangle glued to one of the exposed sides of $P$ that is not adjacent to $T$. This is the first of our candidates, appearing as (1) in Figure \ref{penttripatches}. We could also glue another triangle onto the remaining exposed side of $P$ that is not adjacent to $T$. This creates a concave crevice which must then be filled with a pentagon. We now have two sides of length 4 that start at $v$. No other tiles in $\mathcal{P}$ can be glued to these sides since to do so would create a concave corner, which when filled would eventually cause the original corner to no longer consist of a single triangle. But currently, there is a side of tile length 1, so we must glue a triangle onto the end of the last pentagon, and we now have a bigon. This is our second option, appearing as (2) in Figure \ref{penttripatches}.

We now assume that we have a corner $v$ of $\mathcal{P}$ consisting of a single pentagon labelled $T_1$. (See Figure \ref{penttripatches}(3)). Since all sides of $\mathcal{P}$ have tile length at least 2, there must be a triangles $T_2$ and $T_3$ glued onto each of the two sides of $T_1$ that are adjacent to the sides that touch $v$. We have already seen this candidate patch. But we can add to it further. 

We could extend one of the sides touching $v$ by adding a pentagon $T_4$ to that side. This creates a concave crevice that must be filled with a triangle $T_5$. And this creates a crevice that must be filled with a pentagon $T_6$. This then results in another crevice between the pentagons that must be filled with a triangle. This is our third candidate patch. Note that it is a decomposed magic triangle, as in Figure \ref{penttripatches}(3).

We could extend the decomposed triangle further by adding a triangle $T_8$ to pentagon $T_4$. This creates a crevice that must be filled with pentagon $T_9$, which creates a crevice that must be filled with triangle $T_{10}$. The end result is a bigon, which is our fourth candidate edge-to-edge patch, as in Figure \ref{penttripatches}(4). If instead of adding a triangle to the side of $T_4$, we had added a pentagon to the exposed side of $T_7$, we would ultimately have ended up at the same bigon.

Instead, we could extend the tiles $T_1$, $T_2$ and $T_3$ in (1) by adding a triangle to the triangle $T_2$ or to the triangle $T_3$ or adding a pentagon to the pentagon $T_1$. In any of these cases, the resulting crevices force us to add in two more tiles, obtaining the patch in Figure  \ref{penttripatches}(5).

Finally, we consider what happens if we have two triangles making up a corner of $\mathcal{P}$. As in Figure \ref{penttripatches}(5), we are forced to add two pentagons to obtain sides of length at least 2, and then we are forced to add two triangles to fill the crevice and we have the same patch as the fifth mentioned previously. So these five are all the options. Note that therefore,  the only tile lengths for sides of the resultant patches that are not a bigon are $m = 2$ and $m=3$.

\begin{figure}
    \centering
    \includegraphics[scale=.58]{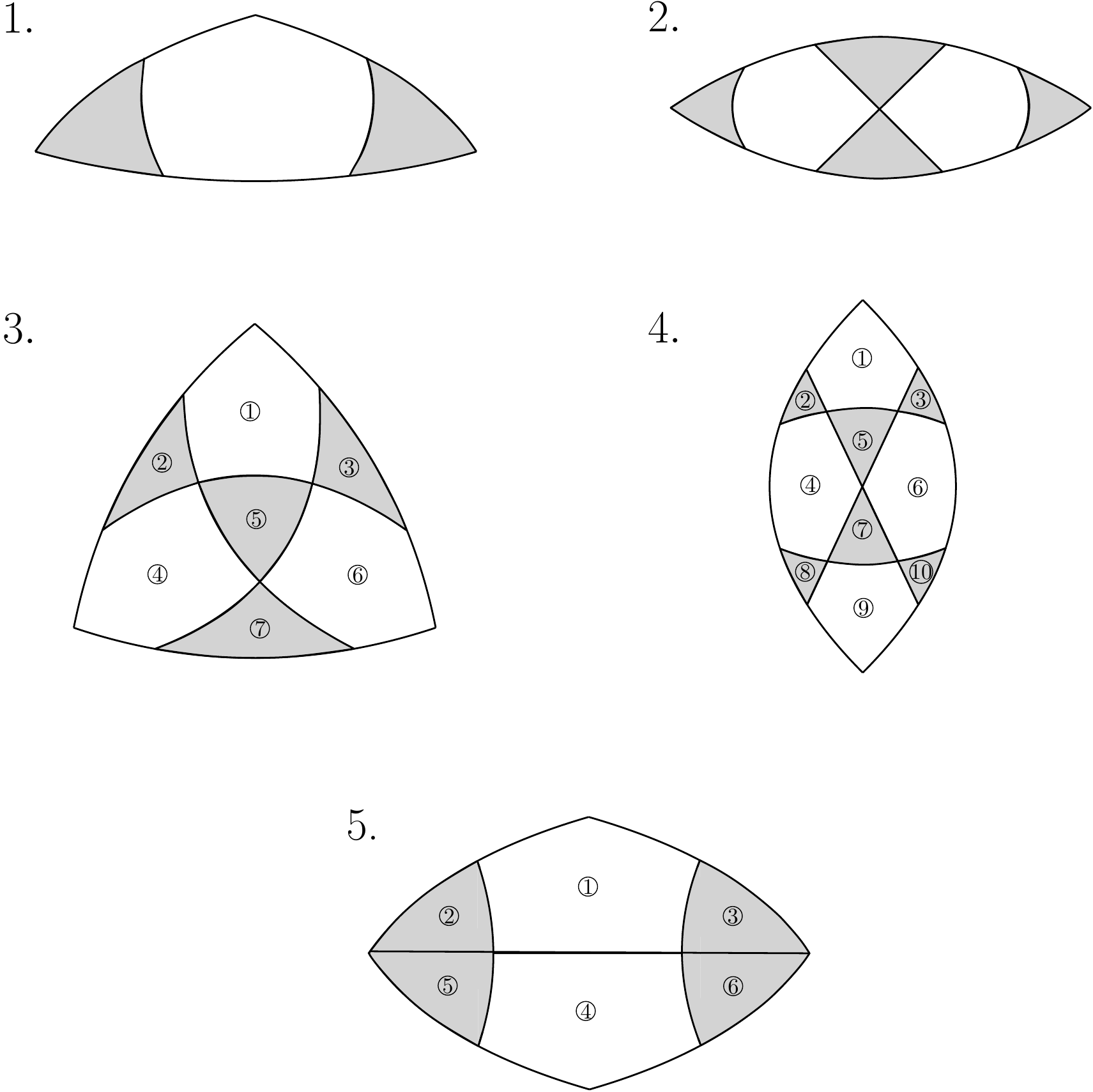}
    \caption{Possible edge-to-edge patches made from tri-pent supp-same tiles.}
    \label{penttripatches}
\end{figure}

We now consider possibilities for tiles glued to sides of $\mathcal{P}$ with these side-lengths. A side-length for $m = 2$ can only have a perfect fit of one tile. This could be either a triangle, square or pentagon of side-length $2\pi/5$. The angle on the triangle is $76.345^\circ$, the angle on the square is $121.86^\circ$, and the angle on a pentagon would be $180^\circ$.

A side-length of $m=3$ cannot correspond to a single tile in the perfect fit, as the side-length of that tile would be $3\pi/5$. All triangles of that length are magic triangles that have been decomposed, and that length is longer than the side length of any other regular polygon.

So there must be two tiles that make up the perfect fit, which because they are supplementary, must be two triangles, a triangle and a square or a triangle and a pentagon. But two supplementary triangles have total length at least $.60817\pi$, the lower bound of which which corresponds to one triangle with angle approaching $\pi/3$, so its side-length approaches 0, and the other triangle having angle approaching $2\pi/3$. So that case is out. 

The total length of a supplementary triangle and pentagon pair is a maximum of  $.436 \pi$, which is too small, so that is out. So it must be a triangle and a square. The sum of the side-lengths of a supplementary pair of triangle and square, with the angle of the triangle being $\alpha$ is $$f(\alpha) = \cos^{-1} \left( \frac{-1/2 + \cos^2(\alpha/2)}{\sin^2(\alpha/2)}\right) + \cos^{-1} ( \tan^2(\alpha/2))$$ This function is concave down with one local maximum. When it is set equal to $3\pi/5$, there are two possibilities for the pair. We could have a  triangle with angle $88.2545^\circ$ and square with angle $91.7455^\circ$, but then the length of the side of the square is $.11\pi$, smaller than the smallest length of a side of a polygon in this tiling, which is $.2 \pi$. The only other option is  a triangle $T_1$ with angle $64.6756^\circ$ and square $S_1$ with angle $115.3243^\circ$. So this is the only possibility for a perfect fit on a side of $\mathcal{P}$ of tile length 3.

We now consider each of the three types of possible edge-to-edge patches that are not bigons. We start with the decomposed magic triangle, with angle at the corners of $116.566^\circ$ as in Figure \ref{penttripatches}(3). Since each side has tile length 3, all three perfect fits must correspond to the square-triangle supplementary pair just discussed. If two squares come together at a corner of the patch, then the crevice has angle $360 - 116.566 - 2(115.324) = 12.786^\circ$, too small for a tile to fill the crevice. 

So it must be that the subsequent perfect fits are arranged so that the square $S$ at one side at a given corner is paired with a triangle $T$ on the next side at that corner. Then the crevice formed at a corner has angle $360 - 116.566 - 115.324 - 64.675 = 63.435^\circ$. This crevice can only be filled with a triangle of side length $\pi/5$, as any other tile would have side-length less than $L$. But this creates a well on the triangle $T$ that has length $.03 \pi$, too small for any tile. See Figure \ref{penttri1}.

\begin{figure}
    \centering
    \includegraphics[scale=.55]{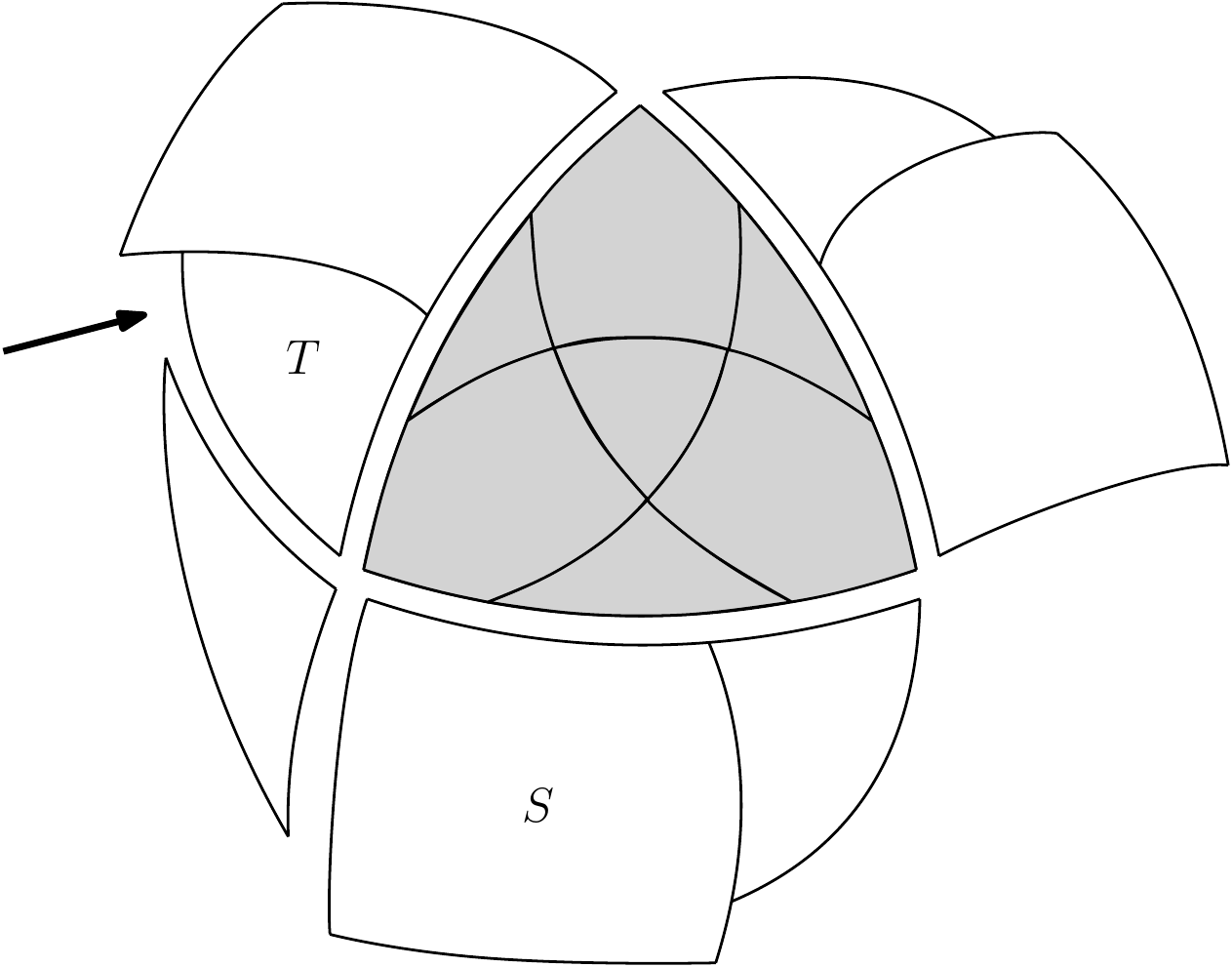}
    \caption{A decomposed magic triangle cannot be a maximal edge-to-edge patch.}
    \label{penttri1}
\end{figure}

We now consider the patch with two triangles and one pentagon as in Figure \ref{penttripatches}(1). The perfect fits on the sides of tile length 2 could be a triangle, square or pentagon. But the pentagon would have angle $\pi$, which would not allow room for the other perfect fits. Neither perfect fit for these sides can be a square either, as such a square of side-length $2\pi/5$ has angle $121.861^\circ$, and  there would not be enough angle remaining at the corner shared by the two of them for a tile to fill the resulting crevice. So the only possibility is that both of these perfect fits are triangles $T_1$ and $T_2$ with angle $76.345^\circ$. And the perfect fit on the side of tile length 3 is the triangle-square pair mentioned above, as in Figure \ref{penttri2}. 

Starting with the corner corresponding to the pentagon in $\mathcal{P}$, we have a crevice of angle $360 - 116.566 - 2(76.345) = 90.744^\circ$. This cannot be a square as it would have side-length less than $L$. So it is a triangle that has side-length larger than the two perfect fit triangles to either side of it.

Concentrating on the corner where the perfect fit has a square, the crevice that remains after adding the two perfect fits here has angle $360 - 115.324 - 63.472 - 76.345 = 104.859^\circ$. This can only be a single tile. If it is a triangle, then its side-length is longer than $T_1$, and it collides with $T_3$,  a contradiction, as in Figure \ref{penttri2}. 

\begin{figure}
    \centering
    \includegraphics[scale=.7]{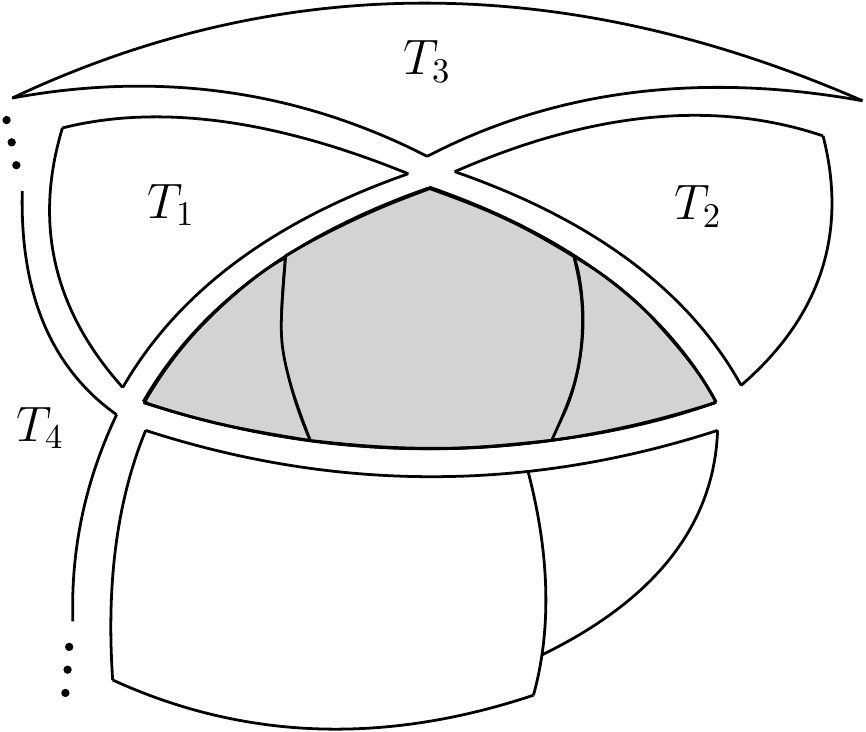}
    \caption{The shaded patch cannot be a maximal edge-to-edge patch.}
    \label{penttri2}
\end{figure}

If it is a square, then its side length is $.2984 \pi$. It creates a well of length $.1016 \pi$, which is smaller than $L = .2 \pi$, which is the smallest size of a tile. So no tile can fill it and we have another contradiction. So this patch cannot exist.

Finally we consider the edge-to-edge patch that consists of two pentagons and four triangles as in Figure \ref{penttripatches}, which has all sides of tile length 2. None of the perfect fits can be squares by considering one of the corners corresponding to the pentagon. As occurred in the previous case, if either perfect fit were a square, the crevice formed by the corner and the two perfect fits would not have angle enough for any tile. 

So all perfect fits are congruent triangles of side length $2\pi/5$ and angle $76.345^\circ$. But then the crevice formed at the corner in $\mathcal{P}$ with two triangles has angle $360 - 2(63.472) - 2(76.345) = 80.366^\circ$. hence, this must be filled with a triangle $T$ of side length longer than the side-length of the perfect fit triangles, as in Figure \ref{penttri3}.

The crevice formed at at the pentagon corner in $\mathcal{P}$  has angle $360 - 116.566 - 2(76.345) = 90.744^\circ$. This can only be filled with a single tile. If it is a triangle, it has side length longer than the perfect fits and it collides with $T$. If it is a square, it has side length shorter than $L = \pi/5$, a contradiction.

\begin{figure}
    \centering
    \includegraphics[scale=.65]{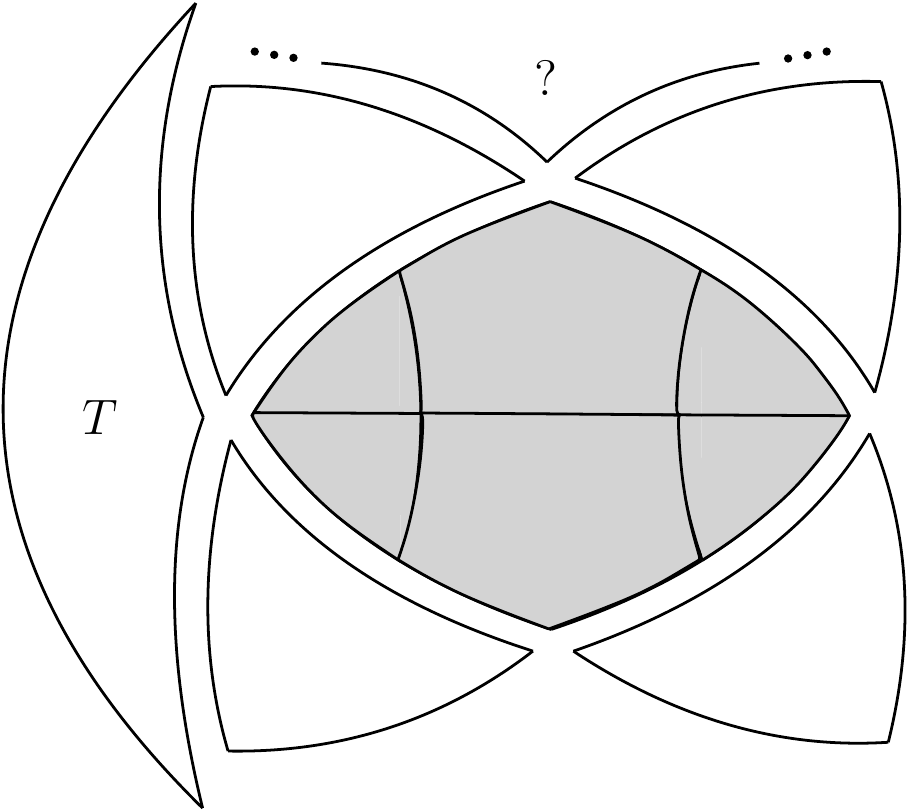}
    \caption{The shaded patch cannot be a maximal edge-to-edge patch.}
    \label{penttri3}
\end{figure}



 This completes the proof of Lemma \ref{minimaledgepatch}.

\end{proof}

An {\bf indivisible edge-to-edge bigon} in a tiling is a bigon tiled edge-to-edge that contains no other proper edge-to-edge tiled bigons.

\begin{lemma} \label{bigonoptions} An indivisible edge-to-edge bigon in a tiling must be one of the five types in Figure \ref{fig:bigontilings}.
\end{lemma}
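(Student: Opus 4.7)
The plan is to reduce to the analysis already carried out in the proof of Lemma \ref{minimaledgepatch} and then filter the result by the indivisibility condition.

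First, I would observe that every tile of an indivisible edge-to-edge bigon $B$ has a common side length $L$. In any edge-to-edge tiling of a connected region by regular polygons, two adjacent tiles share a full side, and regularity forces every side of each to have this length; propagating this across the connected interior of $B$ forces a single common $L$. Combined with Lemma \ref{suppsame}, this also restricts $L$ to one of $\pi/2$, $\pi/3$, or $\pi/5$, because every vertex of the tiling in the interior of a boundary arc of $\partial B$ is automatically a $\pi$-vertex, so the two tiles meeting there must form a tri-tri, tri-square, or tri-pent supp-same pair.

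Second, with $L$ fixed, I would enumerate the possible bigons by a case analysis of the same flavor as Claims 1--4 in the proof of Lemma \ref{minimaledgepatch}. Each boundary arc is determined by a cyclic sequence of supp-same $\pi$-vertices punctuating tile sides of length $L$, while each of the two apex corners carries a combinatorial arrangement of tile corners summing to the common apex angle $\theta$. In each of the three supp-same cases one obtains a short list of candidate bigons, together with one or more hemispherical candidates (half-octahedron, half-cuboctahedron, and possibly half-icosidodecahedron) and bigons with multi-tile apex arrangements arising for larger~$\theta$.

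Third, I would apply the indivisibility hypothesis to winnow the list. A bigon fails to be indivisible precisely when its interior contains a geodesic arc of tile edges joining two antipodal points of its bounding great circles, because such an arc splits it into two proper edge-to-edge sub-bigons. Each hemisphere on the candidate list admits such an interior diameter, as does every bigon with a multi-tile apex configuration (the second apex region itself provides room for such a diameter). Removing all of these leaves exactly the five types depicted in Figure \ref{fig:bigontilings}: one tri-tri bigon (apex angle $\pi/2$), two tri-square bigons (one with triangles at the apex, one with squares at the apex), and two tri-pent bigons (one with triangles, one with pentagons). The main obstacle is the geometric bookkeeping at this last stage, namely checking in each case that the surviving bigon contains no interior geodesic arc of tile edges while each discarded hemisphere or multi-apex bigon does; the tri-pent case is the most delicate, as the richer set of apex configurations requires explicit angle calculations to rule out intermediate possibilities.
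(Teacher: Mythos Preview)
Your first step is correct and matches the paper: the edge-to-edge condition forces a common side-length $L$, and the $\pi$-vertices on $\partial B$ force $L\in\{\pi/2,\pi/3,\pi/5\}$ via Lemma~\ref{suppsame}. Your final answer (one tri-tri bigon, two tri-square bigons, two tri-pent bigons) is also right.

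However, the paper reaches the conclusion far more directly than your plan. Each side of a bigon has length exactly $\pi$, so it carries precisely $\pi/L$ tile-sides (namely $2$, $3$, or $5$), and consecutive tiles along a side must alternate between the two members of the relevant supp-same pair. The only freedom is which special tile sits at the apex, and this immediately yields the five types: one option for $L=\pi/2$, two each for $L=\pi/3$ and $L=\pi/5$. The interior is then forced; only Type~V has any non-boundary tiles, and there is room for exactly two more triangles. There is no need to import the machinery of Lemma~\ref{minimaledgepatch}: its Claims 1--4 concern maximal patches surrounded by strictly longer exterior tiles and do not directly classify edge-to-edge tilings of a bigon, so your step~2 would have to be redone from scratch rather than quoted.

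Your third step contains a genuine gap. You assert that every bigon with a multi-tile apex is divisible because ``the second apex region itself provides room for such a diameter'', but this is not justified: the edge between two apex tiles has length $L<\pi$, and there is no a~priori reason it continues as an unbroken geodesic of tile edges all the way to the antipodal apex. Establishing that every multi-apex bigon really does split into sub-bigons is essentially the content of Lemma~\ref{twotilesatcorner}, which the paper proves \emph{after} the present lemma with a nontrivial case analysis. Your plan therefore either presupposes that later result or leaves a hole. The paper sidesteps the issue entirely by arguing from the boundary alternation and the single apex tile, rather than first enumerating a larger list and then filtering.
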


\begin{proof} Since all tiles have side-length at most $2\pi/3$, there must be vertices on each side of the bigon that are $\pi$-vertices that correspond to a supp-same pair of tiles. Thus the only possibilities for tiles on the boundary correspond to the alternate pairs of the types of supp-same pairs from Lemma \ref{suppsame}, all of the same type. But by the lengths delineated in that lemma, the boundaries must appear as in the five types shown. In the case of Type V, there is exactly room for the two additional triangles in the interior. So this is a complete list of indivisible edge-to-edge bigons.
\end{proof}

We can glue two copies of Type I bigons together along a side to obtain an edge-to-edge tiling of a hemisphere, which is exactly the tiling of a hemisphere obtained from the octahedron. We can glue a side of a Type II bigon to a side of a Type III bigon to obtain another edge-to-edge tiling of a hemisphere, which is the tiling of a hemisphere obtained from the cuboctahedron. The same holds for Types IV and Type V bigon,  yielding a hemisphere that comes from the icosidodecahdron.

\begin{lemma} \label{bigontilings}
If there is a smallest tile that appears in a maximal edge-to-edge bigon with apex angle less than $\pi$, then the corresponding tiling must be one of three lunar tilings or one of the three sporadic tilings.
\end{lemma}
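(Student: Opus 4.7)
The plan is to analyze how the complement $S^2 \setminus B$ must be tiled, and to enumerate the finitely many valid completions. By Lemma \ref{bigonoptions}, $B$ is a union of indivisible bigons of Types I--V glued along sides, and its boundary $\partial B$ consists of two great-circle arcs of length $\pi$ meeting at antipodal poles $N$ and $S$. Each $\pi$-vertex on $\partial B$ is the meeting point of a supp-same pair (tri-tri, tri-square, or tri-pent) by Lemma \ref{suppsame}. Since $B$ has apex angle $\theta < \pi$, the region $S^2 \setminus B$ is nonempty and its boundary lies along the same two great circles.

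The first main step is to show that any tile in $S^2 \setminus B$ which meets $\partial B$ must either be part of another indivisible bigon of Types I--V sharing the poles $N, S$, or else be a regular polar polygon with a corner at $N$ or $S$. The key observation is that, by maximality of $B$, no tile outside $B$ can share a full edge with a tile of $B$, so any such exterior tile must have one side along $\partial B$ spanning at least one $\pi$-vertex. Combining this with the precise side-lengths from Lemma \ref{suppsame} and the bound $|T| \leq 2\pi/n$, I would show that the only regular polygons whose edges can align with consecutive $\pi$-vertices in this way come from another supp-same pair of the same type, organized into a bigon sharing $N$ and $S$, or a polar polygon meeting $N$ or $S$ at a corner.

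With this structural reduction in hand, the second step is to enforce the angular sum condition at each pole: the apex angles of all bigons meeting at $N$, together with the corner angle $\alpha_{\mathrm{polar}}$ of the polar polygon at $N$ (if one is present), must sum to $2\pi$; the same condition holds at $S$. Using the five apex angles
\[
\theta_I = \tfrac{\pi}{2}, \quad \theta_{II} = \arccos\!\bigl(-\tfrac{1}{3}\bigr), \quad \theta_{III} = \arccos\!\bigl(\tfrac{1}{3}\bigr), \quad \theta_{IV} \approx 63.43^\circ, \quad \theta_{V} \approx 116.57^\circ,
\]
together with the requirement that $\alpha_{\mathrm{polar}}$ is the angle of a regular spherical triangle, square, or pentagon, I would carry out a direct case enumeration. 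The combinations without a polar polygon yield exactly the three sporadic orderings I-II-I-III, I-IV-I-V, and II-IV-III-V. The combinations with a polar polygon yield exactly three lunar tilings realizable in this setting: Type II bigons with polar triangles, Type IV bigons with polar triangles, and Type IV bigons with polar squares. The fourth lunar tiling (Type II bigons with polar pentagons) does not appear here because in that tiling the smallest tile is the polar pentagon, which is a singleton, so that case is already handled by the lemma in Section \ref{Building from a Central Pentagon}.

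The main obstacle is the first step, namely showing that any tile in $S^2 \setminus B$ adjacent to $\partial B$ must belong to a bigon sharing the same poles $N, S$ (or else be a polar polygon at $N$ or $S$). Ruling out ad hoc completions requires a careful local analysis at each $\pi$-vertex of $\partial B$: one must track how the edge of an exterior tile extends through successive $\pi$-vertices, verify that it terminates only at a corner of $B$ (equivalently, at a $\pi$-vertex that is itself an endpoint of a supp-same edge on $\partial B$), and show that this propagates up to the pole. Once this structural rigidity is established, the enumeration in the second step is a finite computation with the apex angle values listed above.
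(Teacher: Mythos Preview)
Your high-level outline is reasonable and the enumeration in your second step is correct, but the first step---the structural reduction---is where essentially all the work lies, and your sketch does not supply the mechanism that makes it go through. The paper's proof is organized around a dichotomy you do not identify: either some tile outside $B$ with a side on $\partial B$ has a corner at an apex of $B$ and is \emph{supplementary} to the apex angle there (Case~1), or no such tile exists, in which case the tiles along each side of $B$ form a \emph{perfect fit} of total length exactly $\pi$ (Case~2). Case~1 is handled by following the supplementary tile around (creating crevices and wells that force further tiles) to produce the lunar tilings directly. Case~2 is the hard part: one must determine all ways a sequence of pairwise supplementary regular polygons can have total side-length exactly $\pi$, subject to the constraint that every tile has side-length at least that of the tiles in $B$. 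The paper does this via an explicit length function $g(\alpha)$ in the parameters $(A,B,C,D)$ counting triangles, squares, pentagons, and secondary triangles in the fit, checking each admissible tuple numerically; this is what forces the perfect fit itself to be a Type~I, II, or III bigon and hence yields the sporadic tilings. Your proposed method of ``tracking how the edge of an exterior tile extends through successive $\pi$-vertices'' does not obviously produce this conclusion without that length computation.

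One further point: your claim that the exterior tiles must come from ``another supp-same pair of the same type'' is not correct as stated. In the sporadic tilings (e.g.\ I--IV--I--V), a Type~IV bigon built from tri-pent tiles is bordered by Type~I bigons built from tri-tri tiles of a different side-length; the exterior bigons need not match the supp-same type of $B$. What actually constrains them is the perfect-fit length condition, not a matching of $\pi$-vertex spacings.
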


\begin{proof}
The bigon under consideration must be one of the five delineated in Lemma \ref{bigonoptions}, or a union of two copies of Type II, or two copies of Type IV. Since the apices must be vertices of the tiling, there will be a tile not in the bigon with its side contained in the edge of the bigon and such that the tile has a corner at the apex of the bigon. Let $\alpha$ be the angle at the apex of the bigon and $\beta$ the angle of this tile.

\medskip

\noindent {\bf Case 1.} Suppose that there is such a tile $T$ at an apex of the bigon $B$ that is supplementary with the tile that is at the apex of the bigon. So $\alpha + \beta = \pi$. In this case, the bigon cannot be a union of two copies of  Type II or two copies of Type IV bigons because the supplementary angle to the bigon apex angle is too small for a tile.

For a Type I bigon, the apex has angle $\alpha = \pi/2$, and therefore the only possibility for $T$ is a triangle of angle $\pi/2$. But then there are three edge-to-edge triangles in the corresponding edge-to-edge patch, contradicting that the bigon was supposed to be a maximal edge-to-edge patch.

In the case of a Type II bigon, the apex has angle $\alpha \approx 70.529^\circ$, and the tile $T$ cannot be a square for the same reason, as it would yield a bigger edge-to-edge patch. If it is a triangle, that triangle will have angle $\beta \approx 109.471^\circ$, and it generates a crevice of angle $\alpha$. 

The tile, call it $T'$ that fills that crevice is another triangle of angle $\alpha$, since the angle is too small for a square or pentagon. Then we have a well of angle $\beta$. Since the $\alpha$ triangle is minimal side-length, there cannot be two or more tiles filling the well (as half-vertices must be incident to triangles). So the only way to fill the well is with a square with the same length as the $\alpha$-triangle. Thus, the $\alpha$-triangle is in an edge-to-edge patch that contains at least two tiles. So it must be in a bigon, and the only possibility is a Type II bigon. Repeating exactly this argument for the new crevice formed by the new bigon and the tile $T$ yields a third Type II bigon, the three bigons of which surround the triangle $T$. The complement of these three bigons and the tile $T$ is a triangle of angle $\beta$ congruent to $T$, which cannot be decomposed further into smaller regular tiles by Theorem \ref{polygondecomposition}. Therefore, we have the first of the lunar tilings.

Note that for the Type II bigon, if $T$ was a pentagon with angle $\beta$, then it has side-length smaller than that of the triangle of angle $\alpha$. This case is not covered by the lemma, since the smallest tiles are not in the bigon, but it does lead to the second of the lunar tilings, which we saw when we considered pentagonal singletons.

If the bigon is Type III, then its apex has angle $\alpha \approx 109.471^\circ$ and the supplementary tile $T$ must have angle $\beta \approx 70.529^\circ$.  But then it must be a triangle and that cannot occur because that would extend the edge-to-edge patch. So this bigon cannot occur in a tiling with a supplementary tile at the apex.

If the bigon is Type IV, it has apex angle $\alpha \approx 63.435^\circ$, and the supplementary tile $T$ has angle $\beta \approx 116.565^\circ.$ Again here, $T$ cannot be a pentagon as that would extend the edge-to-edge patch beyond the bigon. 

     Suppose $T$ is a triangle. Then repeating exactly the argument we used in this case for the Type II bigon, we obtain the third of the lunar tilings. 
     
     Suppose now that $T$ is a square, then it has side-length greater than that of the triangle of angle $\alpha$. Then the crevice formed by the bigon and $T$ must be filled with a triangle $T'$ of angle $\alpha$. This generates a well with base along $T'$ and angles $\beta$. Since the triangle with angle $\alpha$ is the smallest triangle, there cannot be more than one tile to fill this well. So it must be filled with a tile of the same length as the triangle $T'$. Hence, $T'$ must be in a bigon of Type IV. 

Repeating the above arguments as we move around the edges of $T$, we obtain four bigons of Type IV around $T$. The union of the four bigons and $T$ fill the entire sphere except for a square of angle $\beta$, which by Theorem \ref{polygondecomposition} cannot be further decomposed. Hence, we obtain the fourth of the lunar tilings.

\medskip

\noindent{\bf Case 2.} Suppose there is no tile that shares a vertex with an apex of $B$, and that has a side on a side of $B$ and that is supplementary with the angle at the apex of $B$. Then each side of $B$ must have a perfect fit, which means that the tiles that glue to each of the two sides of $B$ have total length equal to $\pi$.

We begin with the Type II bigon $B$. Since the tiles in $B$ are minimal side-length, there must be two tiles in each perfect fit. In order that their side-length add to $\pi$, they must both be right-angled triangles. So the bigon $B$ shares sides with two Type I bigons. This leaves room on the sphere for a single bigon of apex angle approximately $109.471^\circ$. In order that the side-lengths of this bigon be $\pi$, this bigon must be a Type III bigon, and we obtain the first of the sporadic tilings. A similar argument for a Type III bigon yields the same resulting tiling.

Suppose now that $B$ is a Type IV or a Type V bigon. Then again, we need perfect fits along each of its sides, though now those perfect fits can contain two, three or four tiles. In the case of two tiles, once again, to be supplementary and add to $\pi$ in length, they must be two triangles in a 
Type I bigon.

Note that every triangle that appears in the perfect fit must have angle at least $70.529^\circ$ in order that it not be smaller side-length than the tiles in the original bigon. 

Define

\begin{align*}
g(\alpha) = & A \cos^{-1}\left(\frac{-1/2 + \cos^2(\alpha/2)}{\sin^2(\alpha/2)}\right) +  B \cos^{-1}\left(\tan^2(\alpha/2)\right) \\ 
 + &  C \cos^{-1}\left(\frac{\cos(2 \pi/5) + sin^2(\alpha/2)}{\cos^2(\alpha/2)}\right) +  D 
\cos^{-1} \left(\frac{(-1/2 + \sin^2(\alpha/2)}{\cos^2(\alpha/2)}\right) 
\end{align*}

 For various nonnegative integer entries in $(A, B, C, D)$, where $A$ is the number of initial triangles, $B$ is the number of squares supplementary to the initial triangles, $C$ is the number of pentagons, supplementary to the initial triangles, and $D$ is the number of additional triangles supplementary to the initial triangles, this function yields the length of the side of the union of the tiles for a given angle $\alpha$ corresponding to the initial triangle tile, one of which will always be present. For all relevant choices for the parameters, $g$ is concave down with a single maximum. 

In the case of three tiles, we must consider each of the possibilities. First, suppose all three tiles in the perfect fit are triangles, in which case $(A, B, C, D) = (2, 0, 0, 1)$. Then the middle triangle must be larger than the other two, or they will overlap. There is only one possibility, which is if the middle one is a magic triangle, and the two smaller ones are special triangles from the tri-pent supp-same pair. But because all magic triangles are decomposed, the perfect fit would have tiles of the same side-length as our initial edge-to-edge patch, meaning they would be included in the edge-to-edge patch and the initial bigon was not the entire patch, a contradiction.  

If the three tiles are two triangles with a square in the middle, so that $(A, B, C, D) = (2, 1, 0, 0)$,  then we know that they could form a Type II bigon, and over the range of angles $\pi/3 < \alpha < \pi/2$,  this is the only possibility. If the three tiles are two squares with a triangle between them, so $(A, B, C, D) = (1, 2, 0, 0)$, then we know that the supp-same tri-square pair would yield a total length of exactly $\pi$. In that case, the crevice where the three tiles meet can only be filled with a triangle that then yields a Type III bigon. All other angles applied to the three tiles yields a total side-length less than $\pi$.

If the three tiles are two triangles with a pentagon in the middle, so $(A, B, C, D) = (2, 0, 1, 0)$, then the maximum value for the function is less than 2.4, and we cannot obtain a perfect fit against a side of length $\pi$. Similarly, if we have two pentagons with a triangle in the middle, the maximum value of the function is no more than 1.9.

If the three tiles consist of a square and a pentagon, with a triangle between them, the maximum value for the function is less than 2.5. This takes care of all of the cases of three tiles in the perfect fit. 

We now consider four tiles. Note that there must be at least two triangles. If there are four triangles, then to avoid overlap, they must all be the same size, so their angles are all $\pi/2$, and their total side-length is $2\pi$, too large.

If there are two triangles alternating with two squares, so $(A, B, C, D) = (2, 2, 0, 0)$, then in the limit as $\alpha$ approaches $\pi/2$, the value of the function approaches $\pi$. Otherwise, there is one value at which the function is $\pi$, but it requires the triangles to be smaller than the triangles in the original bigon.

Two triangles alternating with two pentagons yields a maximum for $g$ of 2.75, too small. Three triangles and a square, which is given by $(A, B, C, D) = (2, 1, 0, 1)$ leaves $g$ always too large.

Two triangles alternating with a square and then a pentagon, so $(A, B, C, D) = (2, 1, 1, 0)$ does have  one solution for $g(\alpha) = \pi$, however the triangle size both causes the pentagon and the square to overlap and is smaller than the triangle tiles in the bigon.

If we consider three triangles and a square, so $(A, B, C, D) = (2, 1, 0, 1)$, then $g$ is too large over the range of allowable values. And if we consider three triangles and a pentagon, so $(A, B, C, D) = (2, 0, 1, 1)$, there is a unique solution, however, again the angle $\alpha$ is smaller than the angle in the tri-pent triangle, so this solution is disallowed.

Thus, the only possibilities for perfect fits are bigons of Type I, II and III. 
We first consider $B$ a Type IV bigon. For a choice of the two bigons from this set to border $B$ that are either both Type I or one Type II and one Type III, there remains a bigon $B'$ of apex angle approximately $116.565^\circ$. This can only be filled with a Type V bigon, as we show in Lemma \ref{singletileatcorner}. This yields two more of the sporadic tilings. For any other choice of the adjacent bigons, the apex angle of $B'$ is such that it cannot be filled in with tiles, which also follows from Lemmas \ref{singletileatcorner} and \ref{twotilesatcorner}. 

Finally, if we start with a Type V bigon, a similar argument yields exactly the same pair of sporadic tilings.

\end{proof}

\begin{lemma}
If there is a minimal side-length tile in a maximal edge-to-edge hemisphere, then the tiling is a 2-hemisphere tiling.
\end{lemma}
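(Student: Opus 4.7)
The plan is to show that the complementary hemisphere must also be tiled edge-to-edge, so that the total tiling satisfies the definition of a 2-hemisphere tiling.

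Let $H_1$ denote the maximal edge-to-edge hemisphere patch asserted in the hypothesis, let $E=\partial H_1$ be the equator, and let $H_2$ denote the complementary closed hemisphere. First I would verify basic geometric constraints: since tile interiors are pairwise disjoint and $H_1$ is completely covered by its edge-to-edge tiling, every tile outside that patch lies entirely in $\overline{H_2}$. By convexity of regular polygons together with the $n\geq 3$ hypothesis (so that no tile is a bigon), each tile of $\overline{H_2}$ meeting $E$ does so along exactly one of its sides, which lies in $E$. So the tiles in $\overline{H_2}$ form a tiling of the closed hemisphere with all equator-touching tiles having one side on $E$.

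Second---the main step---I would show the tiling of $\overline{H_2}$ is edge-to-edge. Assume for contradiction that it is not. Then there exists a maximal edge-to-edge sub-patch $\mathcal{Q}$ of the tiling of $\overline{H_2}$ of more than one tile, all of minimum side-length $L'$ among the tiles in $\overline{H_2}$. The local arguments in the proof of Lemma \ref{minimaledgepatch}---involving crevices, wells, perfect fits, and supp-same pairs around the boundary of such a patch---apply verbatim in the restricted setting of $\overline{H_2}$. Hence $\mathcal{Q}$ must be a bigon, possibly all of $\overline{H_2}$. If $\mathcal{Q}=\overline{H_2}$, the tiling of $\overline{H_2}$ is itself an edge-to-edge tiling of a hemisphere, and we are done.

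Third, I would rule out $\mathcal{Q}$ being a proper bigon strictly contained in $\overline{H_2}$. The two antipodal corners of $\mathcal{Q}$, both lying in the closed hemisphere $\overline{H_2}$, are forced to lie on $E$ (since an interior point of $H_2$ and its antipode cannot both belong to $\overline{H_2}$). Its two boundary arcs are half great circles between these corners, each lying in $\overline{H_2}$. Following the framework of Lemma \ref{bigontilings}, the tiles of $\mathcal{Q}$ dictate specific supplementary-tile or perfect-fit demands at its apices on $E$, and these demands must simultaneously be consistent with the edge-to-edge tiling of $H_1$ already present along $E$ at those apex points. Case analysis over the five bigon types from Lemma \ref{bigonoptions} and the four known edge-to-edge hemispherical tilings of $H_1$ (octahedral, cuboctahedral, icosidodecahedral, or single-tile) shows no consistent fit, via angle and side-length accounting parallel to the proof of Lemma \ref{bigontilings}. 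Therefore $\mathcal{Q}=\overline{H_2}$, and the tiling of $\overline{H_2}$ is edge-to-edge. Invoking the classification of edge-to-edge hemispherical tilings, $\overline{H_2}$ is one of the four standard types. Together with $H_1$, this exhibits the tiling as a 2-hemisphere tiling per the definition in Section 1.

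The main obstacle is the third step: ruling out a proper bigon sub-patch of minimum-side-length tiles inside $\overline{H_2}$. Although the corners of such a bigon are forced onto $E$, one must bookkeep, for each bigon type of $\mathcal{Q}$, the tile demands at those apices against every possible edge-to-edge tiling of $H_1$, and rule out all combinations. This is essentially an extension of the case analysis already carried out in Lemma \ref{bigontilings}, so the difficulty is primarily combinatorial rather than conceptual.
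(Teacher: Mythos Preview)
Your argument has a genuine gap at the start of the second step. From ``the tiling of $\overline{H_2}$ is not edge-to-edge'' you jump to ``there exists a maximal edge-to-edge sub-patch $\mathcal{Q}$ of more than one tile, all of minimum side-length $L'$ among the tiles in $\overline{H_2}$.'' This does not follow: the minimal side-length tiles in $\overline{H_2}$ could all be singletons, in which case no such $\mathcal{Q}$ exists and your appeal to Lemma~\ref{minimaledgepatch} never gets off the ground. The paper's overall strategy handles the singleton case separately (Section~\ref{singletonsection}), and you have simply omitted the analogue of that case here. In addition, the claim that the proof of Lemma~\ref{minimaledgepatch} ``applies verbatim in the restricted setting of $\overline{H_2}$'' is not obvious: that proof repeatedly uses tiles \emph{outside} the patch to force overhangs, crevices, and wells, and near the equator those exterior tiles may lie in $H_1$, where the side-length minimality hypothesis $L'$ need not hold. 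Finally, your third step is only a promissory note; the actual case analysis against the four possible edge-to-edge tilings of $H_1$ is never carried out.

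The paper avoids all of these difficulties with a short device you may find instructive: instead of analysing $\overline{H_2}$ directly, it doubles $\overline{H_2}$ across the equator with a small twist to produce a genuine tiling $\mathcal{T}''$ of the full sphere. Now the already-proved full-sphere results apply without modification. If the smallest tile in $\mathcal{T}''$ appears only as singletons, Section~\ref{singletonsection} forces $\mathcal{T}''$ to be a kaleidoscope or lunar tiling; if it appears in a bigon of apex angle less than $\pi$, Lemma~\ref{bigontilings} forces a lunar or sporadic tiling. In every one of those outcomes there is no great circle contained in the edge set, contradicting the equator of $\mathcal{T}''$. Hence the minimal-length bigon must be the entire hemisphere, so $\overline{H_2}$ is tiled edge-to-edge. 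This reduces the lemma to facts already established, with no new case analysis required.
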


\begin{proof} If a tiling $\mathcal{T}$ contains an edge-to-edge tiling of a hemisphere, then by doubling that hemisphere across its boundary, we obtain an edge-to-edge tiling $\mathcal{T}'$ of the sphere, all the possibilities for which have been classified. So by considering the Platonic, Archimedean and Johnson spherical tilings, we can find all the possibilities. They are the dodecahedral, cuboctahedral, icosidocdecahedral hemispheres and a hemisphere that is a single $n$-gon of angle $\pi$ for $n \geq 3$. 

The other hemisphere can also be doubled, but then add in a small twist along the equator so the tiles on either side no longer line up. Call this tiling $\mathcal{T}''$.  If the smallest tile only appears as a singleton, then by  Section \ref{singletonsection}, the tiling is a kaleidoscope tiling or one of the lunar tilings. But none of those possess a great circle made up of edges, so this cannot occur. Therefore there is an edge-to-edge bigon made up of the smallest tiles. Because of the twist, that bigon cannot cross the equator. If that bigon has angle less than $\pi$, then by Lemma \ref{bigontilings}, the tiling is either a lunar or sporadic tiling, none of which have a great circle made up of edges. If it is the entire hemisphere, then we are done, as both hemispheres are edge-to-edge and the tilings are 2-hemisphere tilings.  
\end{proof}

The final case to consider is if a minimal side-length tile appears in a edge-to-edge bigon of angle greater than $\pi$. Then the complement of that bigon is a bigon of angle less than $\pi$ that must be tiled. We begin with a lemma.

\begin{lemma} \label{singletileatcorner}If in a decomposed tiling, a bigon $B$ of angle less than $\pi$ is tiled such that there is a single tile $T$ at at least one of the apices, then it must be an edge-to-edge tiling of types I, II, III, IV or V.
\end{lemma}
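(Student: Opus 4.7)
The plan is to analyze the single tile $T$ at the specified apex of $B$ and then, by examining the angle constraints at $T$'s adjacent corners along $\partial B$, force the tiling of $B$ to match one of the five indivisible edge-to-edge bigons of Lemma~\ref{bigonoptions}.

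Let $\alpha$ denote the apex angle of $B$, so $T$ is a regular $n$-gon with interior angle $\alpha<\pi$ whose two sides at the apex lie along $\partial B$. Let $v_1, v_2$ be the corners of $T$ at distance $s=|T|$ from the apex along the two sides of $\partial B$. First observe that $T$ must be a triangle, square, or pentagon: a regular $n$-gon with $n\geq 6$ has interior angle greater than $2\pi/3$, so $\pi-\alpha<\pi/3$ at $v_1$ would leave insufficient room for any regular polygon corner.

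The heart of the argument is to show that the angle $\pi-\alpha$ remaining at $v_1$ inside $B$ is filled by a single tile $T_1$ with its corner exactly at $v_1$ and a side along $\partial B$. If instead the next tile along $\partial B$ began strictly beyond $v_1$, or multiple tile corners split $\pi-\alpha$ at $v_1$, then the resulting sub-crevices could only be filled by tiles of side-length at most $s$; by arguments analogous to those in the proofs of Lemmas~\ref{minimaledgepatch} and~\ref{bigontilings}, this would either enlarge the edge-to-edge patch containing $T$ (contradicting that $T$ is the sole tile at the apex) or demand a tile of side-length below the minimum in the tiling. Hence $T$ and $T_1$ have supplementary angles at $v_1$, and because the inside-$B$ angles at $v_1$ sum to $\pi$, the tile just outside $B$ at $v_1$ must contain $v_1$ in the interior of a side, making $v_1$ a half-vertex. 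Then Lemma~\ref{tri.sq.pent} restricts $(T,T_1)$ to triangle-triangle, triangle-square, or triangle-pentagon, and the common side-length $|T|=|T_1|=s$ forces $(T,T_1)$ to be a supp-same pair by Lemma~\ref{suppsame}, so $s\in\{\pi/2,\pi/3,\pi/5\}$.

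With $(T,T_1)$ identified as one of the three supp-same pairs, iterating the same analysis at each new vertex along $\partial B$ (and symmetrically at $v_2$) forces the tiles along $\partial B$ to alternate in a rigid pattern whose total length on each side is exactly $\pi$. Matching the resulting configurations against Figure~\ref{fig:bigontilings} gives Type~I when $T$ is a special right triangle, Type~II or~III when $T$ is from the tri-square pair (triangle or square at the apex, respectively), and Type~IV or~V when $T$ is from the tri-pent pair (triangle or pentagon at the apex, respectively). The main obstacle is the second paragraph: ruling out all non-edge-to-edge configurations at $v_1$ and $v_2$ requires careful side-length and crevice bookkeeping of the sort deployed throughout Section~\ref{bigonsection}; once that step is in hand, the rigid classification of supp-same pairs in Lemma~\ref{suppsame} determines everything else.
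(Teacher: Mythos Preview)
Your sketch identifies the right endgame (forcing a supp-same pair and then reading off the bigon type), but the second paragraph, which you correctly flag as the crux, does not go through as written. The assertion $|T|=|T_1|=s$ is exactly what has to be proved, and neither of your two justifications is valid. First, ``enlarging the edge-to-edge patch containing $T$'' is not a contradiction: if $T_1$ were edge-to-edge with $T$, that would not violate the hypothesis that $T$ alone sits at the apex, since $T_1$ is not at the apex. Second, you invoke ``a tile of side-length below the minimum in the tiling,'' but Lemma~\ref{singletileatcorner} carries no minimality hypothesis on the tiles inside $B$; in its applications (filling the complementary bigon in Lemma~\ref{bigontilings}, and the sub-bigon in Lemma~\ref{twotilesatcorner}) the tiles in $B$ are typically \emph{larger} than the minimal ones. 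So all you know about $T_1$ a priori is that $\angle(T_1)=\pi-\alpha$, which gives the pair type via Lemma~\ref{tri.sq.pent} but says nothing about $|T_1|$.

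The paper closes this gap by a genuine case split on $n$. When $T$ is a triangle it has a single exposed side, and one argues separately that (a) a single tile glued to that side must be supp-same (forcing Types I, II, or IV depending on whether it is a triangle, square, or pentagon), and (b) more than one tile glued there produces a sequence of alternating $\alpha,\beta$ tiles in which some triangle is forced to be too long. When $T$ is a square or pentagon there are two or three exposed sides, the tiles at $v_1,v_2$ are forced to be triangles of angle $\beta$, and one must compare their side-length to $|T|$: longer triangles collide, shorter triangles create unfillable wells, and only the supp-same case survives (yielding Types III and V after further forcing in the interior). Your proposal treats $n=3,4,5$ uniformly and never addresses how the interior of $B$ fills in once the boundary layer is determined; both of these are substantive parts of the argument, not bookkeeping.
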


\begin{proof}
 Let the angle of the bigon and therefore tile $T$ be $\alpha$. Let $\beta$ be the supplementary angle which occurs at both of the other corners of $T$ along the sides of the bigon $B$. 
 
 Suppose first that $T$ is a triangle. If there is a single tile $T'$ glued to the exposed side of $T$ on the inside of $B$, then it must be a supp-same tile.  If it is a triangle, both have angle $\pi$, and  the two tiles make up the entire bigon. If it is a square, it must be special, and then the exposed side of the square in the bigon has supplementary angles $\alpha$ and we are forced to glue in another copy of $T$, giving us a Type II bigon. 
 
If  $T'$ is a pentagon, then it is a special pentagon, and we are forced to glue copies of $T$ to each of the exposed sides of the pentagon in $B$. Since a different choice of $n$-gon would have too large side-length, we are forced to glue another copy of $T'$ to the two exposed sides of the triangles in $B$. And then this leaves room for another copy of $T$ glued to the exposed side of this pentagon. So we obtain a tiling of $B$ of Type IV.
    
We now consider the possibility that there is more than one tile glued to the exposed side of $T$. Then reading across the exposed side of $T$, we have a sequence of supplementary tiles, alternately with angles $\alpha$ and $\beta$. Since they have side-length less than $T$, none of the ones with angle $\alpha$ can be triangles, and therefore all with angle $\beta$ must be triangles, and $\beta < \alpha$. In particular, the two outer ones must be triangles, and there must be an odd number total of them. Then as in Figure \ref{bigoncorneronetri}, if $T'$ is such a triangle of angle $\beta$, one of the tiles $D$ or $E$ glued to its exposed side  must be a triangle, which then  has side-length too long to fit along the triangle of angle $\beta$. (Note that $D$ may be the only tile along the side of $T'$.)  

\begin{figure}
    \centering
    \includegraphics[scale=.5]{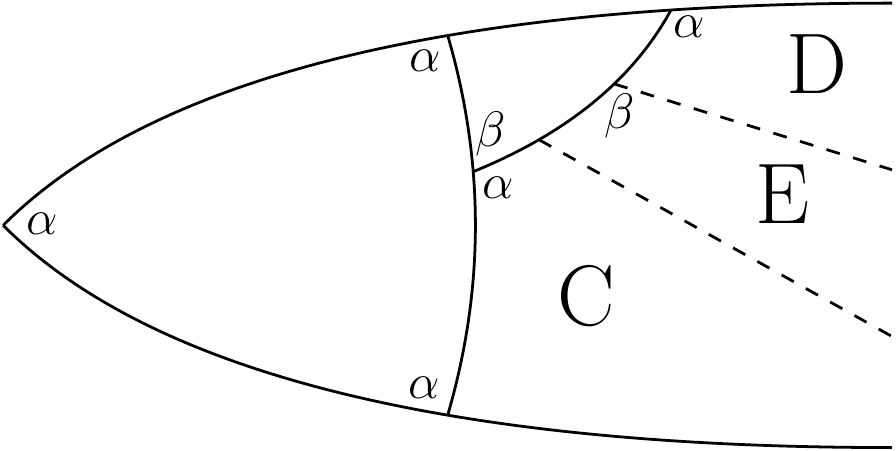}
    \caption{A single triangle in the apex of a bigon cannot have more than one tile glued to it in the bigon.}
    \label{bigoncorneronetri}
\end{figure}

Now, suppose that $T$ is a square of angle $\alpha$. Then at each corner of $T$ along the sides of $B$, there must be a triangle of angle $\beta$. If the triangles have side-length greater than $T$, they collide with one another. If they are supp-same with $T$, then the bigon tiling is completed with a single square of angle $\alpha$, which cannot be further decomposed by Theorem \ref{polygondecomposition}, and we obtain the Type III tiling of a bigon. If the triangles have side-length less than that of $T$, then each triangle generates a crevice of angle $\alpha$ with $T$.
 At least one of these must be filled with a pentagon or else the two tiles filling these crevices will collide. If the pentagon is supp-same with the triangle of angle $\beta$, then $\alpha = 116.566^\circ$, but then the side-length of the square is $.3752 \pi$ which is less than $.4 \pi$, which is the side-length of the triangle plus pentagon, a contradiction.

If the pentagon has side-length less than the triangle, as in Figure \ref{onesquareapex}, then the crevice created by the pentagon on the triangle must be filled with a triangle of angle $\beta$, for which there is not enough room.

\begin{figure}
    \centering
    \includegraphics[scale=.7]{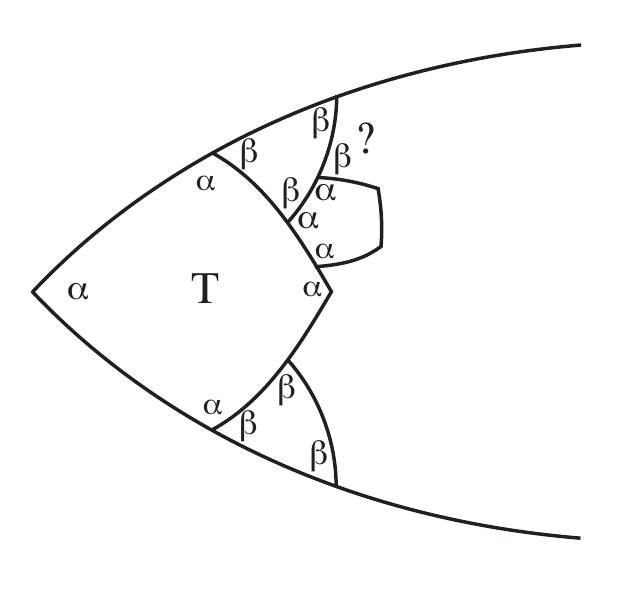}
    \caption{A square at the apex of a bigon when the supplementary triangles have smaller side-length.}
    \label{onesquareapex}
\end{figure}

Finally, if $T$ is a pentagon, the two crevices it creates on the sides of $B$ must be filled with triangles of angle $\beta$. If the triangles have the same side-length as $T$, they are supp-same with it, and then the next tile on each side has angle $\alpha$. Call these two tiles $C$ and $D$. If either one is a pentagon, this forces another triangle of angle $\beta$ to the center, which forces the other to be a pentagon as well. This then forces an additional triangle between them. Along each side of the bigon, there must be another supplementary triangle, which then leaves room for one more pentagon, which cannot be decomposed by Theorem \ref{polygondecomposition}. Thus we have a Type V tiling of the bigon $B$.

On the other hand, if neither $C$ nor $D$ is a pentagon, then if one is a triangle, in fact it is a magic triangle, with side-length $3\pi/5$, forcing $C = D$. But this contradicts the fact we have decomposed all magic triangles. So the only possibility is if both $C$ and $D$ are squares of angle $116.565^\circ$. But such as square has side-length $.3752 \pi$, meaning two cannot fit across a distance of $3\pi/5$ formed by the exposed sides of the union of the pentagon and supp-same triangles.

If the triangles have shorter side-length than $T$, they create crevices of angle $\alpha$ with $T$ that yield wells when we include the sides of $B$.  But any tile of angle $\alpha$ cannot fit in such a well. 

If the triangles have longer side-length than $T$, they create a well with $T$ that must be filled with another triangle of angle $\beta$. But such a triangle is has side-length too large to fit in the well.

\end{proof}

\begin{lemma}\label{twotilesatcorner}
If in a decomposed tiling, a bigon $B$ of angle less than $\pi$ is tiled such that there are two tiles at at least one of the apices, then it must be a tiling obtained by gluing together two of the five possible bigons in the following combinations:

I-II, I-IV, II-II, II-IV, III-IV, IV-IV.

\end{lemma}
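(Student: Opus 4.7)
Let $T_1, T_2$ be the two tiles at apex $p$ of $B$ with $\angle(T_i) = \alpha_i$ and $\alpha_1 + \alpha_2 = \alpha < \pi$. Since each $\alpha_i \in (\pi/3, 2\pi/3)$, each $T_i$ is a triangle, square, or pentagon; moreover, among the pairs square-square, pentagon-pentagon, and square-pentagon, each pair of angles sums to more than $\pi$, so at most one square and at most one pentagon appear among $\{T_1, T_2\}$ (and not both). The plan is to extend the common great-circle direction shared by the sides of $T_1$ and $T_2$ emanating from $p$ (the interior bisector of the apex angle $\alpha$ in $B$) to a great-circle semi-arc $\gamma$ from $p$ to the antipodal apex $p'$; argue $\gamma$ is an edge path that splits $B$ into sub-bigons $B_1, B_2$ containing $T_1, T_2$ respectively; apply Lemma~\ref{singletileatcorner} to each; and enumerate the resulting pairs.

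To see $\gamma$ is an edge path, I walk along it from $p$. Initially both $T_1$ and $T_2$ have a side along $\gamma$ out to distance $\min(|T_1|, |T_2|)$. At the end of the shorter side we either reach a full vertex (when $|T_1| = |T_2|$, so both sides terminate together) or a half-vertex where the shorter tile's corner lies on the interior of the longer tile's side. By Lemma~\ref{tri.sq.pent}, the tile introduced at the half-vertex must be supplementary to the ending tile, and a short angle computation shows that its next side continues along $\gamma$: the shared side of the ending tile and its supplementary neighbor makes angle $\alpha_i$ with $\gamma$ on one side, and the neighbor turns by $\pi - \alpha_i$ at its corner, forcing its next side to lie on $\gamma$. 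Iterating this analysis, stepping at each stage to whichever of the two sides of $\gamma$ contributes the next corner, produces a sequence of supplementary pairs covering $\gamma$ on both sides with no tile crossing, terminating at $p'$. Hence $\gamma$ is an edge path and the tiling of $B$ restricts to tilings of $B_1$ and $B_2$.

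Each $B_i$ is then a tiled bigon of apex angle $\alpha_i < \pi$ with a unique tile $T_i$ at the apex $p$, so by Lemma~\ref{singletileatcorner}, each $B_i$ is an edge-to-edge tiling of Type I, II, III, IV, or V. Using apex-angle values $\pi/2$, $\cos^{-1}(1/3)$, $\cos^{-1}(-1/3)$, $\cos^{-1}(1/\sqrt{5})$, and $\cos^{-1}(-1/\sqrt{5})$ for Types I through V respectively, I enumerate all $15$ unordered pairs. The pairs summing to strictly less than $\pi$ are precisely I-II, I-IV, II-II, II-IV, III-IV, and IV-IV; the pairs I-I, II-III, and IV-V sum to exactly $\pi$ (giving hemispheres rather than bigons of angle $< \pi$); and the remaining six pairs exceed $\pi$ and are geometrically impossible as a bigon apex angle. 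Therefore the six listed combinations exhaust the possibilities.

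The main obstacle is the edge-path claim for $\gamma$: the angle argument is clean locally at each half-vertex, but it must be inducted along $\gamma$, keeping careful track of which side contributes the next corner and confirming that the newly introduced tile's angle and side-length are compatible with continuing the geodesic. Once this is handled, Lemma~\ref{singletileatcorner} and the angle-sum enumeration finish the proof.
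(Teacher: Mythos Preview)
Your reduction idea—split the bigon $B$ along the great-circle arc $\gamma$ through the shared edge of $T_1$ and $T_2$, then apply Lemma~\ref{singletileatcorner} to each half—is attractive and, if it worked, would be considerably cleaner than the paper's argument. The paper instead does a direct case analysis on the pair $(T_1,T_2)$: it shows that the smaller triangle at the apex must have a supp-same tile glued to its exposed side, then propagates this along one side of $B$ to peel off a Type I, II, or IV sub-bigon, after which Lemma~\ref{singletileatcorner} handles the remainder. Your approach would bypass most of that casework.

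The gap is precisely where you flag it, but deeper than you acknowledge: the edge-path claim for $\gamma$ is \emph{not} established at full vertices. Your local argument is sound at a half-vertex—if only the left tile ends, the crevice of angle $\pi-\mu_L<2\pi/3$ admits exactly one tile, and that tile necessarily has a side along the continuing right edge, hence along $\gamma$. But when both the left and right tiles terminate at the same point $q$ (and this \emph{does} happen, e.g.\ in the II--II and IV--IV combinations, where the two sub-bigons have identical tile sequences along $\gamma$), the remaining angle at $q$ is $2\pi-\mu_L-\mu_R$, and nothing you have said forces the filling tiles to have an edge along $\gamma$. For instance, with $\mu_L=\mu_R\approx 70.53^\circ$ the residual angle is about $218.94^\circ$; this could in principle be filled by three tiles whose pairwise edges miss $\gamma$ entirely, or by two tiles whose common edge is skew to $\gamma$. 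You need a separate argument ruling this out—and that argument will end up using the same side-length and supp-same constraints that drive the paper's case analysis. As written, the induction along $\gamma$ simply stalls at the first full vertex.
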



\begin{proof} 





Suppose the two tiles in $B$ at a corner are triangles, or a triangle and square, or a triangle and a pentagon such that the triangle $T$ is smaller. Then, treating the boundary of the bigon as a wall for tiles to the inside,  we obtain a well, as in Figure \ref{bigontwotilesapex}(a). If that well is filled with a single tile $T'$, then it must be a supp-same tile with $T$. If $T'$ is a triangle, then both $T$ and $T'$ must be of angle $\pi/2$. But then, after removing the bigon they create, we are left with a bigon with one tile at its apex. Therefore, by Lemma \ref{bigoncorneronetri}, the remainder is a bigon of one of the five types, as we wished to show. 

If $T'$ is a square, then $T$ and $T'$ must be a supp-same tri-square pair. The next tile along that side of $B$ must be a triangle, which is congruent to $T$. Thus, we have a bigon of Type II contained properly within $B$. Again, if we remove it, we have a bigon that satisfies the hypotheses of Lemma \ref{bigoncorneronetri} and therefore must be one of the five types of edge-to-edge bigons.

If $T'$ is a pentagon, then $T$ and $T'$ are a supp-same tri-pent pair. The tile supplementary to $T'$ on the side of $B$ must be a triangle $T''$ congruent to $T$. The next tile on the side of $B$ is supplementary to $T''$, and therefore has angle $116.565^\circ$. It cannot be a triangle because such a triangle is a magic triangle and therefore would have been decomposed. If it were a square, its side-length would be $.3752 \pi$. But then the next tile on the side of $B$ would have to be a triangle. Then the sum of the side-lengths of the tiles on this side of $B$ would be at least $4\pi/5 + .3752\pi$ which is greater than $\pi$, a contradiction.

Thus, $T''$ must be a pentagon congruent to $T'$ and the next tile on the side of $B$ must be a triangle. Thus we have built a bigon tiled as a Type IV bigon, except for a missing triangle in the middle of the far side. If we attempted to fill that slot with a square or pentagon, they would have side-length less that of the supp-same tri-pent pair. This creates two crevices of angle $116.565\circ$ that must be filled with two triangles. But these triangles would be magic triangles that are decomposed. So this cannot occur.

Thus, there is a bigon of Type IV that we can peel off $B$, leaving a bigon that by Lemma \ref{singletileatcorner} must be one of the five types.

It could be that instead of a single tile $T'$ glued to the exposed side of $T$, there is more than one tile glued there. But as in Figure \ref{bigontwotilesapex}(a), two of the tiles have angle $\beta$ where $\beta + \mu = \pi$. But since $\mu + \gamma < \pi$, we know $\beta > \gamma$. But we are assuming $\gamma > \mu$, so $\beta > \mu$. Hence neither tile with angle $\beta$ can be a triangle, as they would have too great side-length. So there must be at least three tiles glued to the exposed side of $T$. They alternate angles $\beta$ and $\mu$, but every other one must be a triangle, so there will be a triangle that is too long to fit against the side of $T$.

\begin{figure}
    \centering
    \includegraphics[scale=.8]{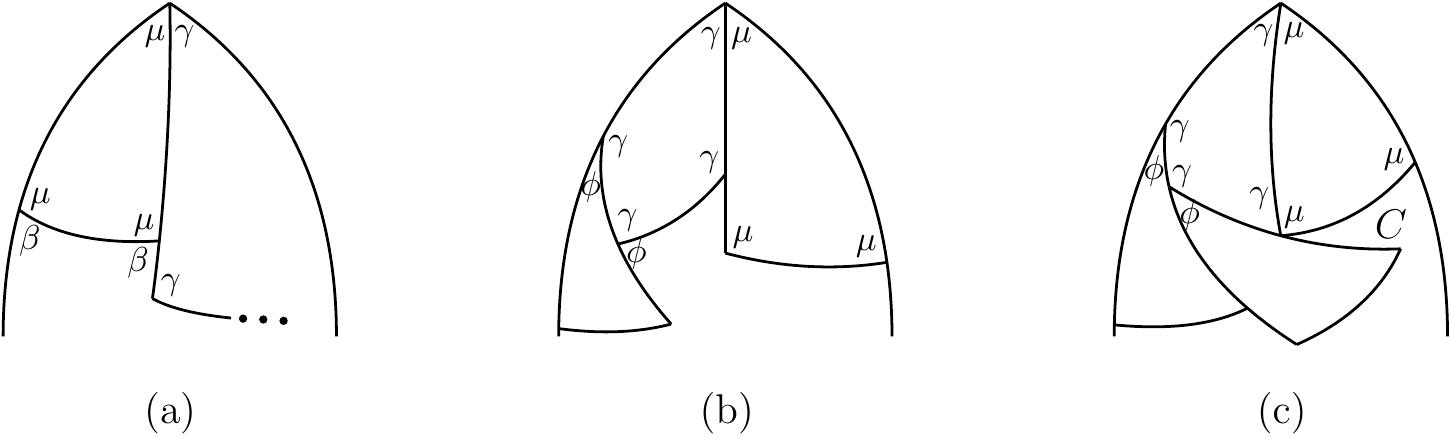}
    \caption{Two tiles at the apex of a bigon does not occur.}
    \label{bigontwotilesapex}
\end{figure}

If the two tiles at the apex are a triangle $T$ and square $S$ and the square is of smaller side-length than the triangle, as in Figure \ref{bigontwotilesapex}(b), then the supplementary tile to the square on the side of $B$ is a triangle of angle $\phi$ greater than $\mu$. It creates a crevice with $S$ also of angle $\phi$ that must be filled with a triangle which is of too great side-length to fit in the corresponding well.

A similar argument with one more triangle deals with the case of a triangle and pentagon at the apex with the pentagon of smaller side-length.

In the case the two tiles are a triangle $T$ and a square $S$ of equal side-length, then again, as in Figure \ref{bigontwotilesapex}(c), we must have two supplementary triangles to the square of angle $\phi$, each of which is of greater side-length than $T$. This creates a crevice $C$ that does not have enough angle for a tile. In the case of a triangle and pentagon of equal side-length, the same argument with one more triangle also yields a contradiction.

The last case is if  both tiles at a corner are triangles $T_1$ and $T_2$ of the same side-length. Then they are both of equal angle $\mu < 90^\circ$. The supplementary tiles along the sides of the bigon cannot both be triangles, as they would be of angle greater than $90^\circ$ and therefore of greater side-length than $T_1$ and $T_2$ and they would collide.

If there is more than one tile glued to the exposed sides of at least one of $T_1$ and $T_2$, then since adjacent tiles are supplementary, there will be at least one triangle with angle either $\beta$ or $\mu$, where $\beta > \mu$. Thus, that triangle will have side-length at least as long as the side-lengths of $T_1$ and $T_2$, and the tiles glued to the exposed side of $T_1$ will collide with the tiles glued to the exposed side of $T_2$, a contradiction. 

So the tiles glued to the exposed sides of $T_1$ and $T_2$ must be the same side-length as $T_1$ and $T_2$, and therefore be congruent, since their angles are the same. If they are both squares, then they are tri-square squares with the triangles,and each exposed side inside the bigon must be filled with triangles congruent to $T_1$ and $T_2$. In other words, the bigon is two Type II bigons glued along an edge. 

If the tiles glued to the exposed sides of $T_1$ and $T_2$ are pentagons, then the triangles and pentagons must be tri-pent tiles. Continuing along each boundary of the bigon, we then must glue on anoher tri-pent triangle, and then a tri-pent pentagon and then a tri-pent triangle. At the center, we have room for two more tri-pent triangles, and we now see the bigon as the union of two Type IV bigons. 
Other than these last two cases, we know that the bigon must be a Type I or Type IV bigon paired with one of the other bigons. Considering which can be paired to obtain a bigon of apex angle less than $\pi$, we obtain the list given in the theorem.

\end{proof}

\begin{corollary} \label{bigonbigangle}
There are no spherical tilings by regular polygons where an edge-to-edge patch containing a minimal side-length tile is a bigon $B$ of angle greater than $\pi$. \end{corollary}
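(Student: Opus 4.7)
My plan is to argue by contradiction. After enlarging $B$ to a maximal edge-to-edge patch if necessary (which is still a bigon of apex angle at least $\alpha$ by Lemma \ref{minimaledgepatch}), assume $B$ is a maximal edge-to-edge patch of minimum side-length tiles forming a bigon with apex angle $\alpha > \pi$. Then the complement $B' = S^2 \setminus B$ is a bigon of apex angle $\alpha' = 2\pi - \alpha < \pi$, which must itself be tiled by regular polygons of the overall tiling. Since $\alpha' < \pi$ and every regular $n$-gon with $n \geq 3$ has interior angle strictly greater than $\pi/3$, at most two tiles can meet at each apex of $B'$, and each such tile has the apex as a corner (a geodesic side of a tile cannot contain the apex in its interior, since the boundary of $B'$ turns at the apex and hence is not a geodesic near it). Lemmas \ref{singletileatcorner} and \ref{twotilesatcorner} therefore force the tiling of $B'$ to be either a single Type I, II, III, IV, or V bigon, or one of the listed combinations I-II, I-IV, II-II, II-IV, III-IV, IV-IV.

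The next step is to eliminate most configurations by comparing side-lengths. Let $L \in \{\pi/2,\pi/3,\pi/5\}$ denote the minimum side-length in the tiling, so every tile of $B$ has side-length $L$. Along each boundary arc of $B'$, the tiles have side-length $L'$ determined by the bigon type on that arc: $\pi/2$ for a Type I arc, $\pi/3$ for a Type II or III arc, $\pi/5$ for a Type IV or V arc. If $L = L'$ on some arc, then the $\pi$-vertex subdivisions of $B$ and $B'$ along that arc coincide, so every boundary tile of $B$ shares a full side with a corresponding boundary tile of $B'$; this extends the edge-to-edge patch beyond $B$, contradicting maximality. Hence $L < L'$ on every boundary arc, and since $\pi/5$ is the smallest side-length available, any $B'$ with a Type IV or Type V boundary arc is immediately excluded. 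This removes $B'$ = Type IV, Type V, I-IV, II-IV, III-IV, IV-IV from consideration.

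In the five surviving configurations — $B'$ is Type I, Type II, Type III, I-II, or II-II — both $\alpha$ and $L$ are tightly constrained. The apex of $B$ is composed of the apex tiles of $a$ supp-same-pair bigons of one type and $b$ of the other, with $a, b \in \mathbb{Z}_{\geq 0}$ and $a + b \geq 2$ (since $\alpha > \pi$), giving the equation
\[
 \alpha \;=\; a\,\theta \;+\; b\,(\pi - \theta),
\]
where $\theta \in \{\pi/2,\,\cos^{-1}(1/3),\,\cos^{-1}(\sqrt{5}/3)\}$ is the triangle-angle of the supp-same pair associated with $L$. Rewriting as $(a-b)\theta = \alpha - b\pi$ and invoking Niven's theorem (so that $\cos^{-1}(1/3)/\pi$ and $\cos^{-1}(\sqrt{5}/3)/\pi$ are both irrational), I would carry out a short enumeration in each of the five residual cases — bounded by $a + b \leq 5$ since each of $\theta$ and $\pi - \theta$ exceeds $\pi/3$ and $\alpha < 2\pi$ — to verify that no nonnegative integer pair $(a,b)$ solves the equation. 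For example, when $B'$ is Type I one gets $(a-b)\theta = (3/2 - b)\pi$, forcing $a = b = 3/2$, which is not an integer; the other four cases are dispatched similarly. This contradiction finishes the argument.

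The main obstacle is the closing case-analysis: for each of the five residual configurations of $B'$ and each compatible value of $L$ one must confirm that the apex equation admits no valid nonnegative-integer solution. The irrationality inputs from Niven's theorem are what prevent any numerical coincidences from becoming actual solutions, reducing each case to a short symbolic check.
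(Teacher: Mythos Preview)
Your approach is essentially the paper's: pass to the complementary bigon $B' = S^2 \setminus B$, classify its tiling via Lemmas~\ref{singletileatcorner} and~\ref{twotilesatcorner}, use maximality of $B$ to force the side-length along every arc of $\partial B'$ to be strictly larger than $L$ (eliminating all configurations with a Type~IV or~V component), and then verify that none of the residual apex-angle equations has an integer solution. The paper's own proof asserts this last step in one sentence; you are more explicit, which is good, but two points need fixing.

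First, a minor slip: the tri-pent triangle angle is $\theta = \cos^{-1}(1/\sqrt{5})$, not $\cos^{-1}(\sqrt{5}/3)$. (Niven's theorem still gives the irrationality you want after doubling: $\cos 2\theta = -3/5$.)

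Second, and more substantively, Niven's theorem alone does not finish the case-analysis. It tells you that each of $\cos^{-1}(1/3)/\pi$ and $\cos^{-1}(1/\sqrt{5})/\pi$ is irrational, which suffices when $B'$ is Type~I (since then $\alpha = 3\pi/2$ is a rational multiple of $\pi$, forcing $a=b=3/2$). But for $B' \in \{\text{II},\ \text{III},\ \text{I--II},\ \text{II--II}\}$ with $L=\pi/5$, your equation $(a-b)\theta = \alpha - b\pi$ has $\theta = \cos^{-1}(1/\sqrt{5})$ on the left while $\alpha$ carries a $\cos^{-1}(1/3)$ term on the right; ruling out a solution with $a\neq b$ amounts to asserting that $\pi$, $\cos^{-1}(1/3)$, and $\cos^{-1}(1/\sqrt{5})$ are $\mathbb{Q}$-linearly independent, which Niven does not provide. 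The gap is easy to close --- with $a+b\leq 5$ there are only a handful of candidate pairs, and in every case the discrepancy exceeds $3^\circ$, so a short explicit computation (or a citation to the Conway--Jones result on rational linear relations among $\arccos$ values) suffices --- but you should actually carry that out rather than attribute it to Niven.
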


\begin{proof} By Lemmas \ref{singletileatcorner} and \ref{twotilesatcorner}, if such a tiling existed, the complementary bigon $B'$ would have to be one of the five types of bigons or combinations of two such bigons with the sum of their apex angles less than $\pi$. But $B$ cannot contain tiles of the same size as those in $B'$, where $B$ is made up of a union of Type I bigons, or a union of Type II and Type III bigons or a union of Type IV and Type V bigons. But then there is no combination of angles at the apices of the bigons in $B$ and $B'$ that will add to $360^\circ$.   
 
\end{proof}





\section{Magic Triangles}\label{magictrianglesection}

We now have a complete classification  of decomposed tilings of the sphere by regular polygons, including the tilings from this paper, and the additional edge-to-edge tilings that were known to exist before this. If any one of those contains a collection of tiles that can be composed into a magic triangle, then we obtain a new non-edge-to-edge tiling. We list the additional tilings here.

Starting with the icosidodecahedron, each triangle in the tiling is the center of a set of tiles that together compose to yield a magic triangle. So composing any one of those sets yields a non-edge-to-edge tiling. 

We can also pick any two such sets that do not overlap in the interior of tiles, and compose them to obtain a tiling with two magic triangles. The magic triangles can be opposite one another on the icosidodecaedron. The resulting tiling is in fact the same as the lunar tiling with Type V bigons. Note that if we take that lunar tiling and decompose one of the polar triangles, we obtain the tiling from the preceding paragraph. 

We can also have the two magic triangles on the icosidodecahedron touch on their boundaries, overlapping a length of $2\pi/5$ on an side of each. Or they can be chosen to share a single vertex. In both these cases note how irregular the boundary of the edge-to-edge patch becomes.

We can also choose three sets of tiles that each compose to a magic triangle. In this case, each magic triangle has a side that overlaps with the side of another one in a length of $2\pi/5$. 

There are also four sets of such tiles that do not overlap in their interior, however if we compose all four, we obtain a tiling we have already seen, which is the tri-pent kaleidoscope tiling with triangle of side-length exactly $3\pi/5$. We also note that certain of these new tilings could have been obtained by starting with the tri-pent kaleidoscope tiling with triangle of side-length $3\pi/5$ and subdividing one, two or three of the magic triangles within it.

Moving to the sporadic tilings by bigons, note than any such that contains a bigon of Type V has a set of tiles that compose into a magic triangle. So since two of the sporadic bigon tilings contain one copy of that bigon, this generates two more tilings.

Finally, note that if we have a hemisphere tiled edge-to-edge by tri-pent supp-same tiles, which is the same as an icosidodecahedral hemisphere, there is room for a subset of tiles that compose into a magic triangle, which has one side on the boundary of the hemisphere. So any tiling that contains a icosidodechaedral hemisphere also generates a tiling with a magic triangle. This includes the edge-to-edge Johnson tilings corresponding to Johnson solids J6 and J34, in addition to the 2-hemisphere tilings that use an icosidodecahedral hemisphere.

A final case is if we take a tiling by two tiles, one a magic triangle and one the complement. Then by decomposing the magic triangle, we obtain a tiling called the {\bf magic triangle tiling}. Note that this is the only tiling obtained in this manner, as if there is a tile of angle greater than  $\pi$, then its complementary tile has angle less than $\pi$. By Theorem \ref{polygondecomposition} from the appendix, there are four options for a tiling a regular polygon of angle less than $\pi$. The only one that generates a non-edge-to-edge tiling is the magic triangle decomposition.

\section{Conclusion}

We can now prove Theorem \ref{maintheorem}.
Given a non-edge-to-edge tiling $\mathcal{T}$ of the sphere by regular polygons, all of angle less than $\pi$, we saw in Lemmas 3.2, 3.3, and 3.4 that if there is a minimal side-length prototile that appears only as a singleton, then the tiling must be one of the five families of kaleidoscope tilings or it must be the lunar tiling with polar pentagons. In Lemma 4.1, we proved that if $\mathcal{T}$ is a decomposed tiling, and a minimal side-length tile appears in an edge-to-edge patch, that patch must be a bigon. In Lemma 4.3, we proved that if the bigon angle is less than $\pi$, the tiling must be one of the remaining three lunar tilings or one of the three sporadic tilings. In Lemma 4.4, we proved that if the bigon has angle $\pi$, so it is a hemisphere, we obtain one of the ten 2-hemisphere families of tilings. And in Corollary \ref{bigonbigangle}, we showed there are no tilings corresponding to the bigon having angle greater than 
$\pi$.  In Section 5, we found all tilings that could be obtained by composing the tiles in magic triangles that exist  in either edge-to-edge tilings or any of the tilings we have listed. Further in  Theorem A.1, we proved that a regular polygonal tile of angle less than $\pi$ can be decomposed into regular polygons with at least one  $\pi$-vertex if and only if it is a magic triangle. Thus, there is only one additional non-edge-to-edge tiling that comes from allowing polygons of angle greater than $\pi$. 
    
    This completes the proof of Theorem 1.1.
    
    \appendix
    \section{Tiling Regular Polygons}

The following theorem is useful at various points in the paper, but it is also of independent interest. 

\begin{theorem} \label{polygondecomposition} A regular spherical $n$-gon with $n \geq 3$ and angle less than $\pi$ can be decomposed into regular polygonal tiles if and only if  it is one of the following:
\begin{enumerate}
\item The magic triangle 
\item The pentagon of angle $4\pi/5$ decomposing edge-to-edge into five triangles
\item The octagon decomposing edge-to-edge into four triangles and four squares surrounding a fifth square
\item The decagon decomposing edge-to-edge into five triangles and five squares surrounding a pentagon 
\end{enumerate}
\end{theorem}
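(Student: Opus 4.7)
The plan is to prove both directions of the biconditional. The reverse direction is an explicit construction: each of the four decompositions can be written down and verified by matching side-lengths and angles. In particular, the magic triangle admits the non-edge-to-edge decomposition of Figure \ref{magictriangle} using three pentagons and four triangles from the tri-pent supp-same pair; the pentagon of angle $4\pi/5$ admits five congruent regular triangles with central angle $2\pi/5$ meeting at its center (a spherical side-length computation shows the triangle and pentagon have a common side-length in this configuration); and the octagonal and decagonal decompositions mirror the facial structure of the truncated cube and truncated dodecahedron, with a central square (respectively pentagon) surrounded by an alternating ring of triangles and squares.

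For the forward direction, let $P$ be a regular spherical $n$-gon with $n \geq 3$ and $\angle(P) = \alpha_P < \pi$ decomposed into regular polygonal tiles. Every tile $T$ satisfies $|T| \leq |P|$. Along each side of $P$, the decomposition induces a sequence of tile sides; by Lemma \ref{tri.sq.pent}, any interior subdivision point is a half-vertex supporting a tri-tri, tri-sq, or tri-pent supplementary pair, and by Lemma \ref{suppsame} the two tiles of that pair are forced to be the corresponding special tiles with prescribed side-length. At each corner of $P$, the meeting tile angles sum to $\alpha_P$. I would then case-split on $n$: for $n \geq 11$, the lower bound $\alpha_P > (n-2)\pi/n \geq 9\pi/11$ together with the short fixed side-lengths forced on the boundary tiles by Lemma \ref{suppsame} yields a local contradiction at any corner of $P$ via an angle-length count. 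For $n \in \{4, 6, 7, 9\}$, a similar local analysis shows that no combination of allowed boundary segments and corner tile angles is consistent with the side-length and corner angle of a regular $n$-gon. For $n \in \{5, 8, 10\}$, the allowed corner configurations pin $\alpha_P$ to a single value admitting a decomposition, and propagating inward from the boundary using the supp-same half-vertex constraint forces the decomposition to be the listed one.

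The main obstacle is the case $n = 3$, which is the richest because $\alpha_P$ can range across $(\pi/3, \pi)$. Here I would distinguish by whether the decomposition is edge-to-edge and, if not, by which of the three supp-same pairs appears on the boundary. The edge-to-edge subcase can be eliminated because any such triangulation of $P$ by congruent regular triangles would make $P$ a face of a Platonic or Archimedean tile, none of which admits a further edge-to-edge decomposition into smaller regular polygons. If a boundary half-vertex exists, I would use Lemma \ref{sameanglepolygons} and Corollary \ref{samelengthpolygons} to show that the tri-tri and tri-sq special side-lengths $\pi/2$ and $\pi/3$ cannot partition a side of a regular triangle with the corresponding forced corner angle (the arithmetic of side-lengths along a side of $P$ fails), leaving only the tri-pent case. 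This case rigidly forces $\alpha_P$ to equal the tri-pent pentagon angle $\approx 116.565^\circ$ and $|P| = 3\pi/5$, so that $P$ is the magic triangle. Propagating the tri-pent special tiles along the boundary, the arrangement is forced to place three pentagons at the corners and three triangles at the midpoints of the sides, and the unique remaining interior region is a triangular hole of side-length $\pi/5$ that must be filled by one more tri-pent triangle, giving exactly the decomposition in Figure \ref{magictriangle}. The remaining subtlety, that no alternative multi-tile corner configuration can occur, follows because any two tiles meeting at a corner of $P$ with angles summing to $\alpha_P < \pi$ would force further tile corners at the shared edge with incompatible side-lengths, ruled out by a direct propagation argument analogous to the ones used in Section \ref{bigonsection}.
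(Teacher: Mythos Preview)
Your forward argument rests on a misreading of Lemma~\ref{suppsame}. You write that at any $\pi$-vertex on a side of $P$, ``by Lemma~\ref{suppsame} the two tiles of that pair are forced to be the corresponding special tiles with prescribed side-length.'' But Lemma~\ref{suppsame} only classifies pairs that are already known to be both supplementary \emph{and} of equal side-length. Two tiles meeting at a $\pi$-vertex on $\partial P$ are supplementary, but nothing forces their side-lengths to agree: a triangle of angle $\alpha$ and a square of angle $\pi-\alpha$ can sit side by side on a geodesic with very different side-lengths. The paper's proof spends its entire Case~2 (subcases 2a--2f) precisely on this point, treating separately the situations where the two tiles at a $\pi$-vertex have the smaller one a triangle, the smaller one a square or pentagon, or equal side-lengths, and in each case propagating to a contradiction or to the magic triangle. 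Once the boundary side-lengths are no longer pinned to $\pi/2$, $\pi/3$, or $\pi/5$, your subsequent case-split on $n$ has no traction: the ``angle--length count'' for $n\geq 11$ and the ``similar local analysis'' for $n\in\{4,6,7,9\}$ are not arguments but placeholders.

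Two smaller issues. First, you do not cleanly isolate the case with \emph{no} $\pi$-vertices on $\partial P$; this is the paper's Case~1 and is where the pentagon, octagon, and decagon decompositions actually arise (via two tiles at each corner, not via boundary half-vertices). Your sketch only mentions the edge-to-edge subcase for $n=3$, and there the claim that ``any such triangulation of $P$ by congruent regular triangles would make $P$ a face of a Platonic or Archimedean tile'' is both unjustified and does not cover non-triangular tiles. Second, the octagon and decagon decompositions come from the rhombicuboctahedron and rhombicosidodecahedron, not the truncated cube and truncated dodecahedron; the latter have octagonal and decagonal faces themselves and do not exhibit a square-plus-alternating-ring pattern.
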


The first appears as a subset of the tiles in the spherical projection of the icosidodecahedron. The second appears as a subset of the tiles in the spherical projection of the icosahedron. The third appears as a subset of the tiles in the spherical projection of the rhombicuboctahedron. The fourth appears as a subset of the tiles in the spherical projection of the rhombicosidodecahedron.

\begin{proof} 

\noindent {\bf Case 1.} We first consider edge-to-edge tilings of such an $n$-gon $P$ with angle $\lambda < \pi$ and with no $\pi$-vertices on its boundary.
Then each corner must consist of the corners of at least two tiles, since if not, the tile at that corner would have the same angle and because of there being no $\pi$-vertices, would have the same side-length as $P$, and therefore be $P$. Further, because  each corner has angle less than $\pi$, and each angle of a tile is greater than $\pi/3$, there are at most two tiles meeting at a corner of $P$, and they must either be two triangles, a triangle and a square or a triangle and a pentagon.

In the first case, all the triangles have the same side-length and therefore must meet at a central vertex. The angle of one such triangle at the central vertex is $2\pi/n$, making the angles of $P$ equal to $4\pi/n$. But those angles must be less than $\pi$, forcing $n > 4$. But if $n \geq 6$, the angle of the triangle is less than or equal to $\pi/3$,  a contradiction. So the only possibility is $n = 5$, yielding the pentagon with angle $4\pi/5$, decomposing into five triangles. 

If we allow triangles and squares and triangles and pentagons to meet at each corner of $P$, then the choice must be the same at every corner, as all these tiles have the same side-length, and the tiles to either side of a triangle have the same angle, since the angles of the two tiles meeting at a corner of $P$ must add to the angle of that corner.

If we alternate squares of angle $\beta$ and triangles of angle $\alpha$, then $n$ must be even. Once we include this outer layer of the tiling of $T$,  each interior vertex is incident to two squares and one triangle, with angle sum greater than $\pi/3 + 2 (\pi/2) = 4\pi/3$. Thus, the interior unfilled $n/2$-gon must have an angle less than $2\pi/3$, implying that $n/2 < 6.$ If $n/2 = 4$, $P$ is the octagon in the conclusion of the theorem. If $n/2 = 5$, $P$ is the decagon in the conclusion to the theorem. In both cases, that the interior polygon cannot be further decomposed will follow from the rest of the proof in this theorem. If $n/2 = 3$, the central triangle would have the same side-length and therefore angle as the rest of the triangles. So we would have $2\alpha + 2\beta = 2\pi$, implying $\alpha + \beta = \pi$ and $P$ has angle $\pi$, a contradiction. 

If we alternate pentagons of angle $\beta$ and triangles of angle $\alpha$ in the outer layer, then each interior corner of a triangle is shared with the corners of two pentagons. Let $\gamma$ be the angle that is to be filled. So $\alpha + 2 \beta + \gamma = 2 \pi$. But since triangles have angle greater than $\pi/3$ and pentagons have angle greater than $3\pi/5$, $\gamma < 7\pi/15 < \pi/2$. So we can only fill this crevice with a triangle. However, $\alpha + \beta < \pi$ implies $\beta + \gamma > \pi$, which implies $\gamma > \alpha$. So the triangles filling these $n/2$ crevices all have side-length longer than the side-lengths of the other tiles we have already inserted. This forces the new tiles, appearing as tiles $B$ and $C$ in Figure \ref{tripentcontradiction}, to overlap, a contradiction.

\begin{figure}
    \centering
    \includegraphics[scale=.7]{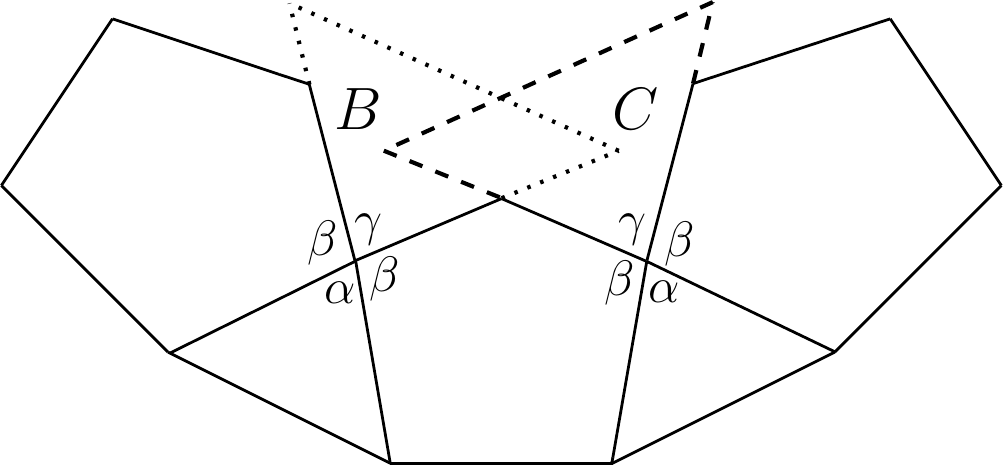}
    \caption{Alternating pentagons and triangles with no $\pi$-vertices leads to tile overlap.}
    \label{tripentcontradiction}
\end{figure}

\medskip
\noindent{\bf Case 2.} We now consider the case that there are $\pi$-vertices on at least some sides of $P$. Every $\pi$-vertex must involve a triangle. We define a corner of $P$ to be {\bf split} if two tiles in the tiling of $P$ meet at that vertex.

\medskip



\noindent{\bf Case 2a.} There is a side with a single $\pi$-vertex and neither of the two corners of the side are split. Then we cannot have two triangles, as this would prevent the polygon from being regular unless the triangles were the same side-length, and therefore supp-same, meaning the side-length of $P$ would be $\pi$, which is too large for the side-length of any regular polygon of three or more sides. 

If there is a triangle and a square, they cannot have the same side-length, as then the angles at the two corners of $P$ corresponding to this side would be different. The square cannot have longer side length than the triangle, as this would prevent $P$ from being regular. So this situation must appear as in Figure \ref{pivertexnosplit}(a). So tile $B$ must be a triangle since it is supplementary to a square. But then there is not room for it along the initial triangle. The same argument shows that we cannot have a triangle and a pentagon unless the pentagon has side-length equal to the triangle, as in Figure \ref{pivertexnosplit}(b) and (c). But then the side-length of the triangle is the same as the side-length of the pentagon. So we have a supp-same pair. Thus, all the sides of the polygon must be of length $2\pi/5$. But then $\alpha$ and $\beta$ are both angles of $P$ and hence equal to each other and therefore to $\pi/2$. But this is not possible for a pentagon. 

\begin{figure}
    \centering
    \includegraphics[scale=.45]{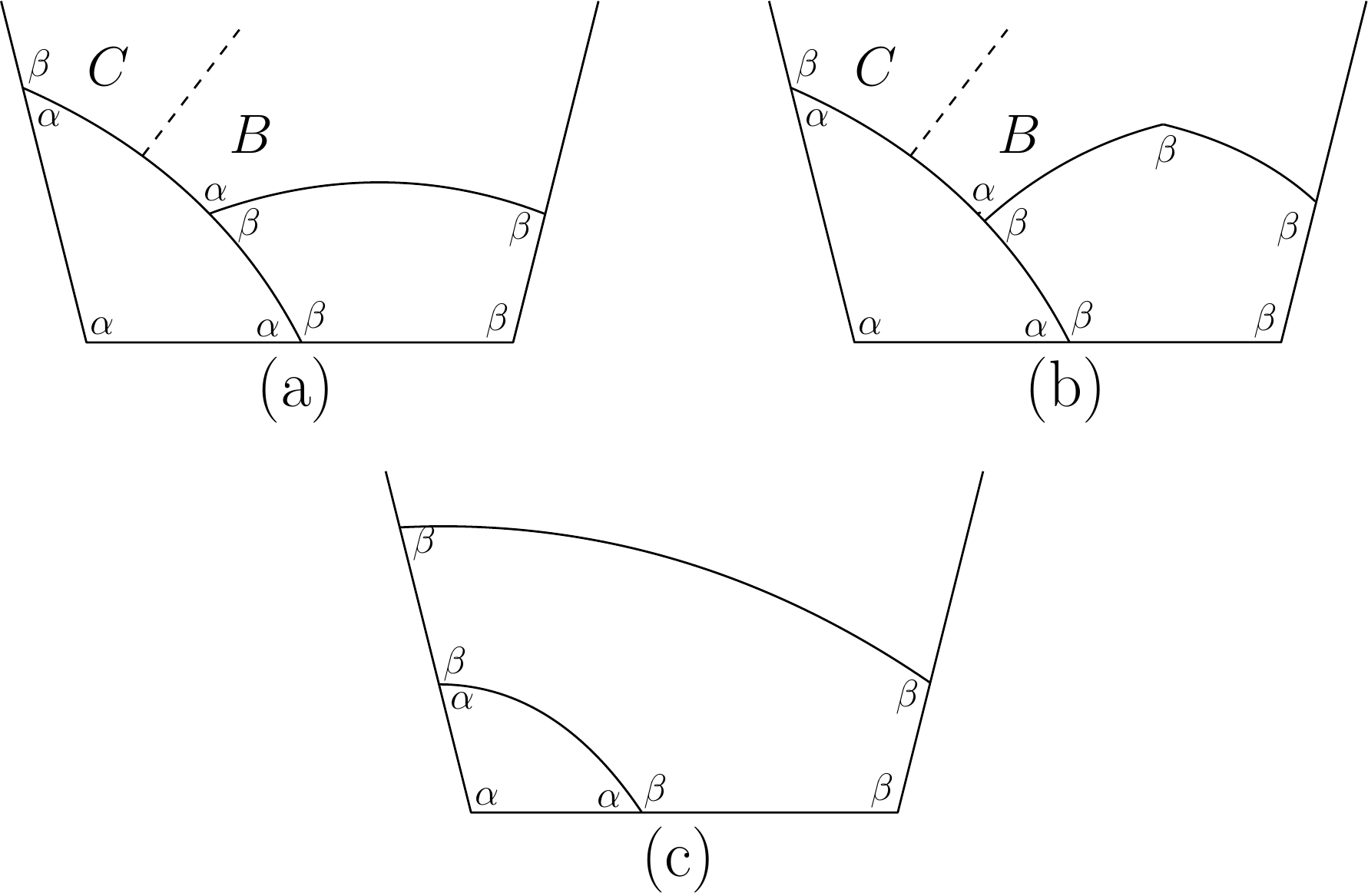}
    \caption{A side cannot contain a single $\pi$-vertex if both corners are not shared with more than one tile.}
    \label{pivertexnosplit}
\end{figure}

\medskip

\noindent{\bf Case 2b.} There is a side with a single $\pi$-vertex and both of the two corners of the side are split. There are a variety of cases to consider. When the pair are not of equal side-length, there are two triangles, a triangle and square with the triangle smaller side-length, a triangle and pentagon, with the triangle of smaller side-length, a triangle and square with square the smaller and a triangle and pentagon with pentagon the smaller. These are depicted in Figure \ref{pivertexsplits}. 

\begin{figure}
    \centering
    \includegraphics[scale=.7]{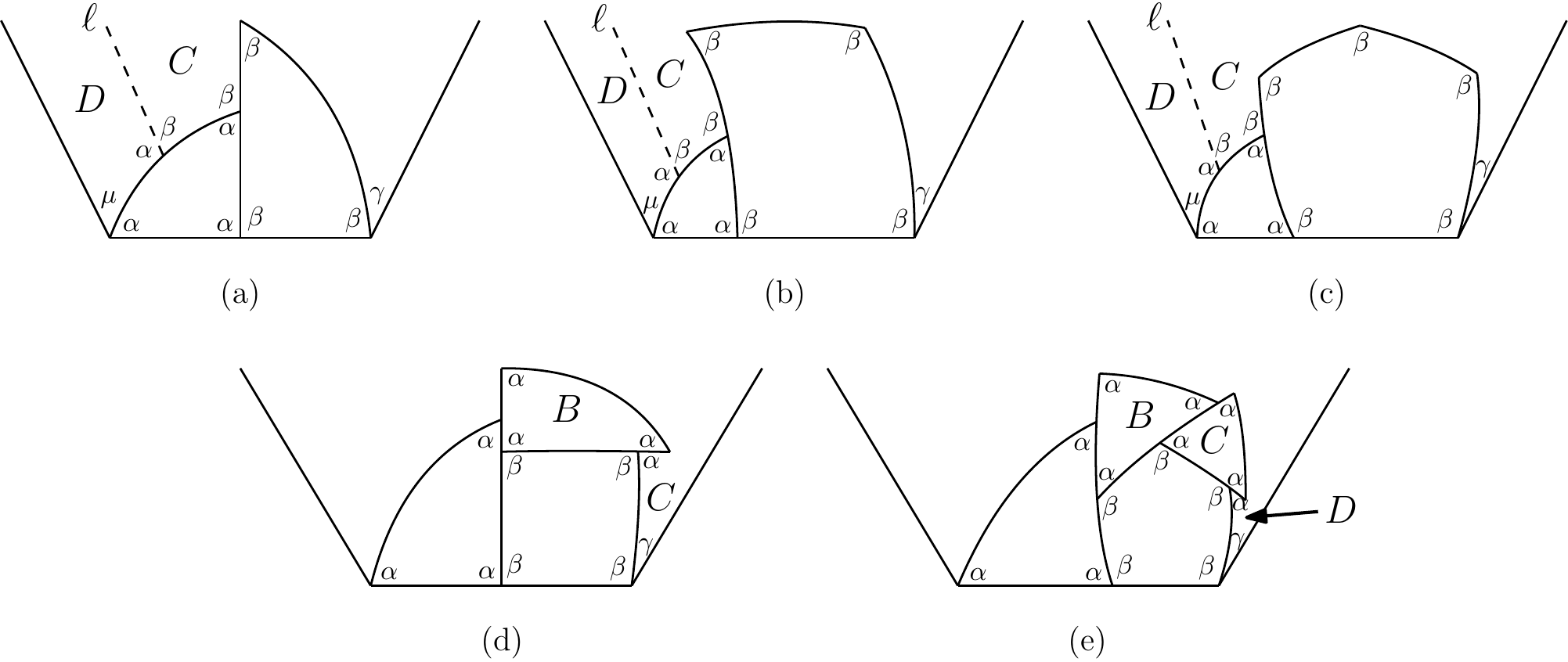}
    \caption{A side cannot contain a single $\pi$-vertex if both corners are split.}
    \label{pivertexsplits}
\end{figure}

In cases (a), (b), and (c), the angles along the exposed side of the triangle of angle $\alpha$ are $\beta$ and $\mu$. But $\beta$ cannot equal angle $\mu$, as $\alpha + \beta =\pi$ but $\alpha + \mu < \pi$. So there must be at least one half-vertex along this side. The tiles glued to this side alternate between angles $\alpha$ and $\beta$. Every other one must be a triangle. But then there is a triangle of angle $\alpha$ or $\beta$, both of which have side-length too large to be glued along this side. 

In cases (d) and (e), the tile labelled $B$ must be a triangle since it is supplementary to a non-triangle. Then in (d), $C$ is also supplementary to a square and hence must be a triangle of angle $\alpha$. However than there is not room for it on the side of the square. 

Similarly in (e), tile $C$ must also be a triangle of angle $\alpha$ and again here, tile $D$ must also be a triangle of angle $\alpha$ and there is not room for it on the side of the pentagon.

If the two tiles are of equal side-length, then we have a supp-same pair. In the case of triangle and triangle their total length is $\pi$, too large for a side of $P$. If they are a triangle and a square, their total side length is $2\pi/3$, which can only occur if $P$ is a triangle of angle $\pi$, which we are not allowing here.  In the case of a triangle and a pentagon, $\alpha \approx 63.435^\circ$ and $\beta \approx 116.565^\circ$. This implies $\gamma < 63.435^\circ$ which means that the tile filling the crevice between the pentagon and the side is a triangle that has side-length shorter than the side of the pentagon. This creates a well that has angles greater than $116.565^\circ$. No tile of that angle can fit in the well, since the smallest tile of angle $116.565^\circ$ is the very pentagon we are using.

\medskip
\noindent{\bf Case 2c.} There is a side with a single $\pi$-vertex and one of its two corners is split and the  other is not.  The arguments for the cases (a), (b) and (c) from Figure \ref{pivertexsplits} did not depend on whether or not the rightmost tile was in a corner by itself or with another tile. So the only cases we need to consider appear in Figure \ref{pivertexonesplit}. 

\begin{figure}[h]
    \centering
    \includegraphics[scale=.4]{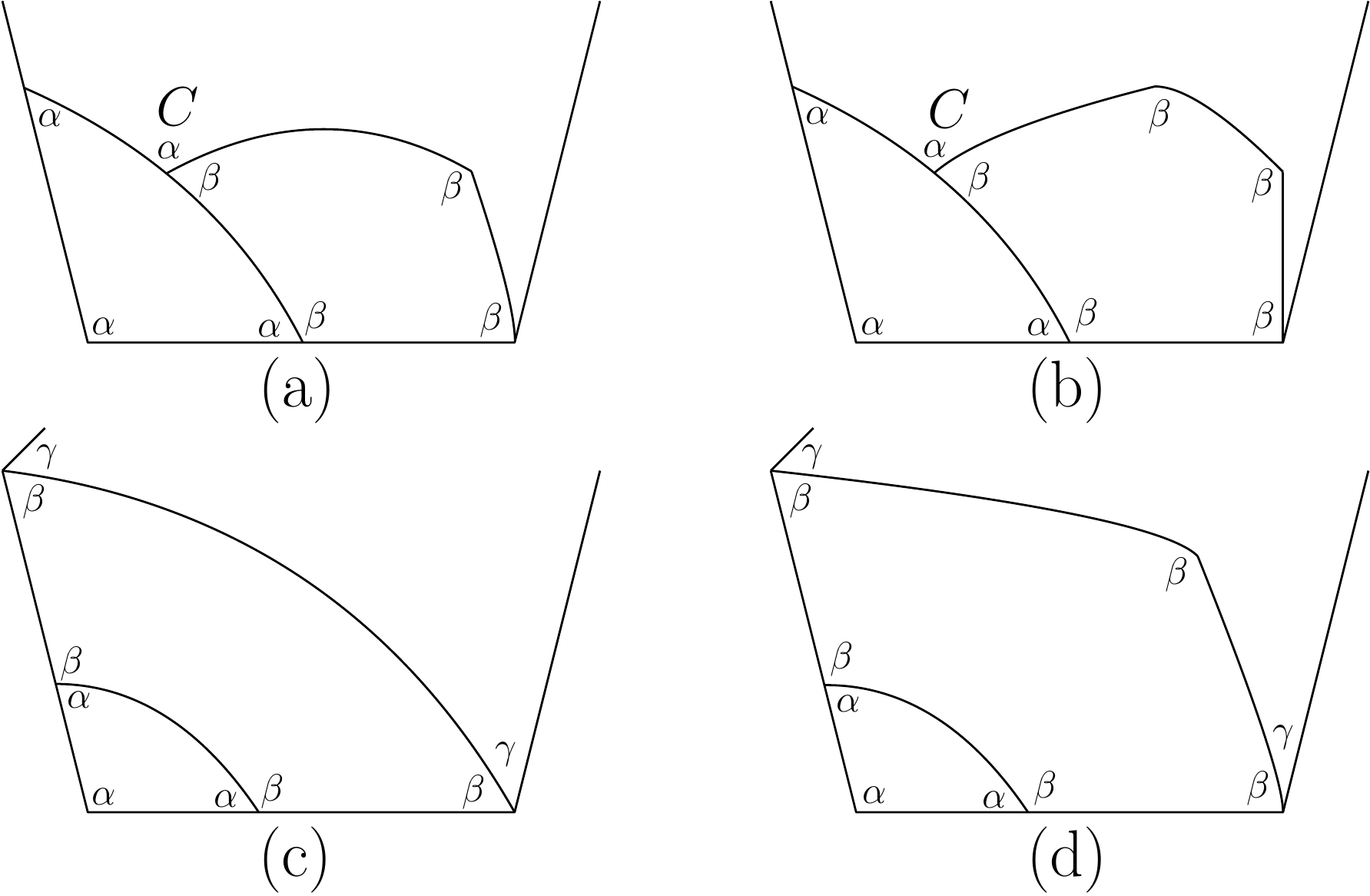}
    \caption{A side cannot contain a single $\pi$-vertex if one corner is split.}
    \label{pivertexonesplit}
\end{figure}

But in cases (a) and (b), tile $C$ is supplementary to a non-triangle, and so it must be a triangle of angle $\alpha$, which has side-length too long to fit here. In cases (c) and (d), $\beta + \gamma = \alpha$ so $\beta < \alpha$, which contradicts the angles necessary for these supp-same pairs.

\medskip
\noindent{\bf Case 2d.} There is a side with two $\pi$-vertices, and neither of the corners of this side are split. The possibilities with tiles that are not supp-same appear in Figure \ref{twopiverticesnosplit}.

\begin{figure}[h]
    \centering
    \includegraphics[scale=.5]{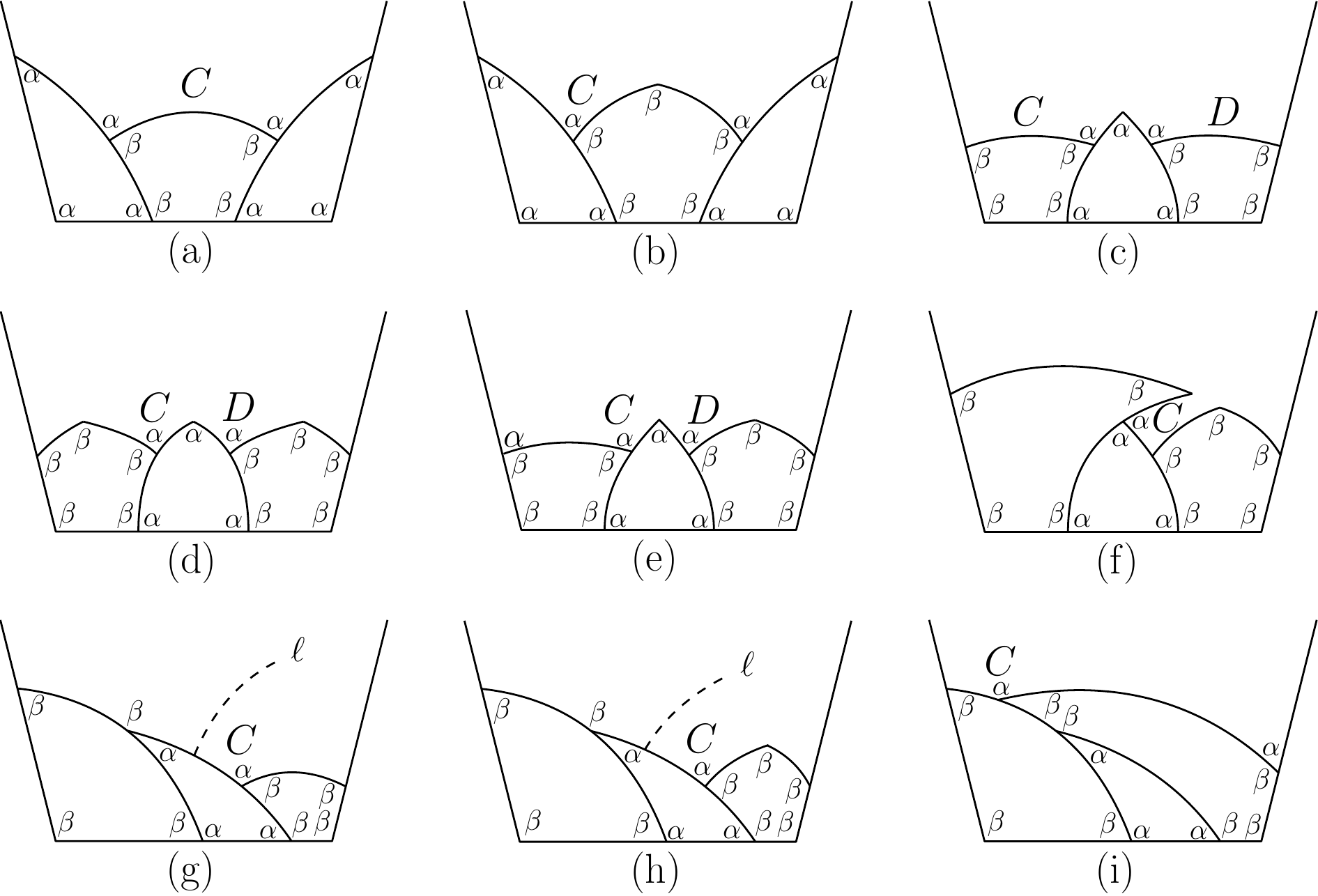}
    \caption{A side cannot contain two $\pi$-vertices and no splits when the side-lengths are different.}
    \label{twopiverticesnosplit}
\end{figure}

In all of the cases,  tiles $C$ and $D$ must be a triangles of angle $\alpha$. In cases (a), (b), (f), (g) and (h), there is not enough room in a well for the tile $C$. In cases (c), (d), and (e), tiles $C$ and $D$ collide.  In the case of (i), the triangles of angle $\alpha$ have the same length as the pentagon of angle $\beta$. Hence these are a supp-same tri-pent pair. This forces the triangle of angle $\beta$ to have side-length $3\pi/5$, making it a magic triangle. The polygon is forced to be a bigon, which, if the magic triangle were decomposed, would be of Type V. But in particular, this  means $P$ is not an $n$-gon for $n \geq 3$.

If we allow tiles of the same side-length, then there are six cases as in Figure \ref{twopiequallengthnosplit}. 
\begin{figure}
    \centering
    \includegraphics[scale=.65]{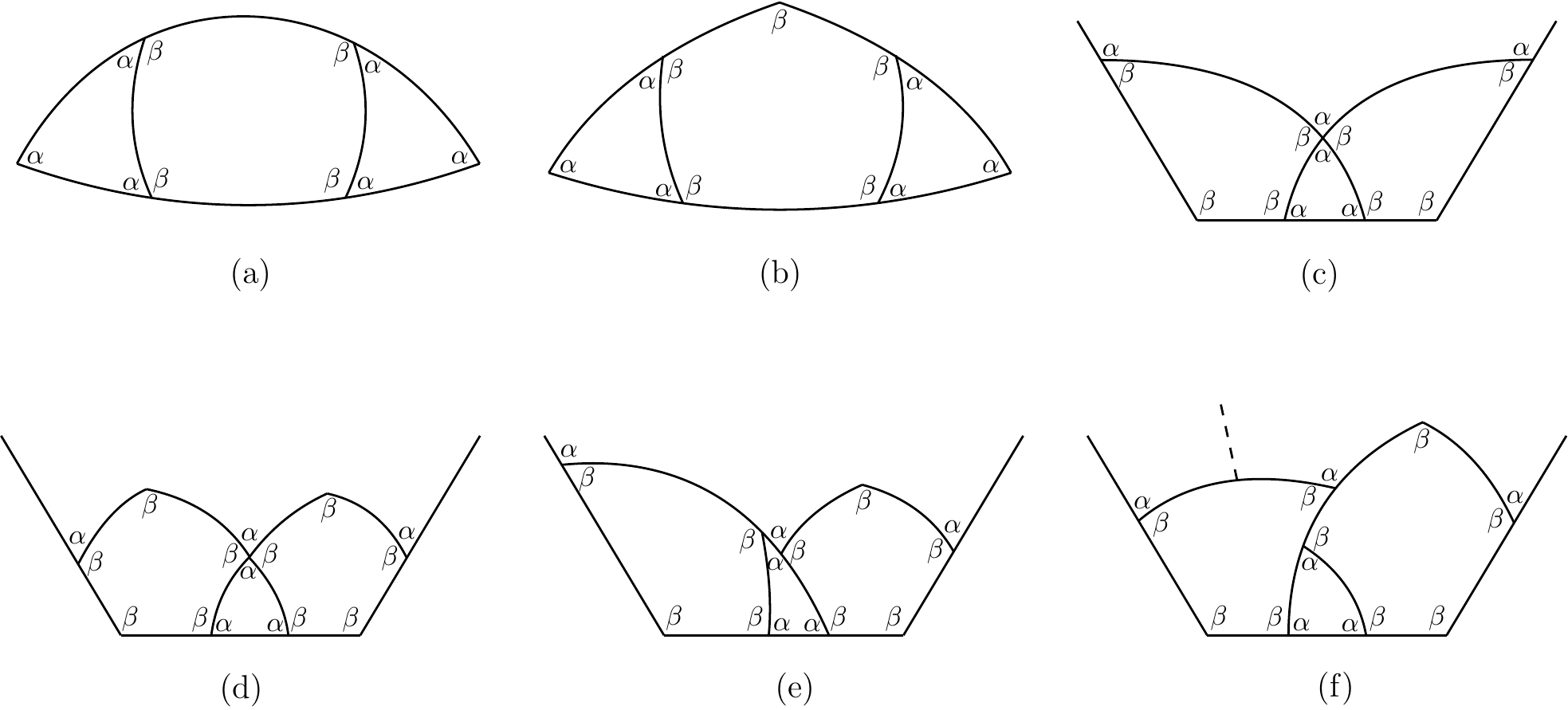}
    \caption{There is only one option when there are two $\pi$-vertices, no splits and tiles of the same length.}
    \label{twopiequallengthnosplit}
\end{figure}
In case (a), the three tiles can only generate a bigon. In case (b), the bottom side of $P$  has length $3\pi/5$. But there is no way to add tiles to these three to expand both of the other two sides to that length. In case (c), since $\alpha  \approx 70.528^\circ$, it can only be a triangle that fills the crevice at the center. But this completes the patch into a bigon. In case (e), both crevices from the pentagon must be filled with triangles of angle $\alpha$ which will collide with one another. In case (f), the top side of the square is too long for one triangle of angle $\alpha$ but too short for two. But the two corresponding crevices must be filled with triangles, a contradiction. 

Thus, the only possibility is case (d). But then because the side-length is $3\pi/5$ and the corner angle $\beta \approx 116.565^\circ$, this must be a magic triangle. To see that it can only be tiled in one way, note that all three crevices must be filled with triangles that are supp-same. This leaves room at the top for a single  supp-same-pentagon, as expected.

\medskip
\noindent{\bf Case 2e.} There is a side with two $\pi$-vertices, and one or two of the two corners is split. Then both corners must be split as the angles for the  outer two tiles are the same, as are the angles of $P$.

This does not impact any of the arguments in the previous case for the situations pictured in Figure \ref{twopiverticesnosplit}.
(In case (i), the contradiction comes from the fact the side-length of $P$ is forced to be $\pi$.)
In the case of Figure \ref{twopiequallengthnosplit} but with both corners split, we see 
cases (a) and (c) have side-length $\pi$ which is too long. For cases (e) and (f), the central crevice must be filled with a triangle of angle $\alpha$. This forces a new crevice on the pentagon with angle $\alpha$ which must also be filled with a triangle. And this creates an additional crevice of angle $\alpha$ on the pentagon, for which we do not have room for the requisite triangle. 

     In cases (b) and (d), the residual angle $\gamma$, not including $\beta$, at the corners of $P$ must be less than $\alpha$. So those crevices must be filled with triangles of angle $\gamma$. This creates a well on each side, with angles greater than $\beta$. Hence there are no tiles that can fit in those wells, as pentagons of angle $\beta$ are already too large.

\medskip

\noindent{\bf Case 2f.} We show that there are never more than two $\pi$-vertices on a side of $P$. If there are three or more $\pi$-vertices, then these correspond to a sequence of four or more tiles, each supplementary to the preceding one. In particular there are at least two triangles of the same angle $\alpha$ and two other tiles that have angle $\beta$ where $\alpha + \beta = \pi$. One of the triangles $T$ is surrounded by two tiles in the sequence. The triangle must have side-length no smaller than one of them, or those two tiles will overlap. For a given angle $\alpha$, the least length for the four tiles occurs if both neighbors are pentagons. If the triangle forms a supp-same pair with them, then the sequence of four tiles has a side-length of $4\pi/5$ which is longer than the largest possible side-length of a regular polygon, which is $2\pi/3$. If the pentagons shrink, the length first grows, passes through a maximum and then shrinks toward the length of two triangles of angle $2\pi/5$, each of which have length $.3524 \pi$. So the total length remains larger than $2\pi/3$. If we replace the pentagons with two squares or a square and a triangle, the angle $\alpha$ can shrink to $\pi/2$, but this still yields a total side-length of  at least $\pi$. If all four are triangles the only possibility is that they all have angle $\pi/2$ and the total length is $2\pi$.








\end{proof}


\bibliographystyle{amsalpha}
\bibliography{biblio.bib}

\providecommand{\bysame}{\leavevmode\hbox to3em{\hrulefill}\thinspace}
\providecommand{\MR}{\relax\ifhmode\unskip\space\fi MR }
\providecommand{\MRhref}[2]{%
  \href{http://www.ams.org/mathscinet-getitem?mr=#1}{#2}
}
\providecommand{\href}[2]{#2}
\begin{thebibliography}{Pap20}

\bibitem[B\"00]{Bolcskei}
A.~B\"olcskei, \emph{Classification of unilateral and equitransitive tilings by
  squares of three sizes}, B\"eitrage Algebra Geom. \textbf{41} (2000),
  266--277.

\bibitem[GS16]{GS2}
Branko Grunbaum and Geoffrey~C. Shephard, \emph{Tilings and patterns}, W. H.
  Freeman and Dover, 1987 and 2016.

\bibitem[Joh66]{Johnson}
Norman Johnson, \emph{Convex solids with regular faces}, Canadian Journal of
  Mathematics \textbf{18} (1966), 169–200.

\bibitem[Pap20]{Pappus}
Pappus, \emph{Synagoge, {Book V}}, 320.

\bibitem[Sch00]{Schattschneider}
Doris Schattschneider, \emph{Unilateral and equitransitive tilings by sqaures},
  Discrete and Comp. Geom. \textbf{24} (2000), no.~2, 519--526.

\bibitem[Zal67]{Zalgaller}
Victor Zalgaller, \emph{Convex polyhedra with regular faces}, Zap. Nauchn.
  Semin. Leningr. Otd. Mat. Inst. Steklova (in Russian) \textbf{2} (1967),
  1–221.

\end{thebibliography}

\medskip
\medskip

{\footnotesize \noindent COLIN ADAMS, DEPARTMENT OF MATHEMATICS, WILLIAMS COLLEGE, WILLIAMSTOWN, MA 01267 \\
{\it E-mail address:} {\bf cadams@williams.edu}

\medskip

\noindent CAMERON EDGAR, DEPARTMENT OF MATHEMATICS, WILLIAMS COLLEGE, WILLIAMSTOWN, MA 01267 \\
{\it E-mail address:} {\bf cse1@williams.edu}

\medskip

\noindent PETER HOLLANDER, DEPARTMENT OF MATHEMATICS, WILLIAMS COLLEGE, WILLIAMSTOWN, MA 01267 \\
{\it E-mail address:} {\bf  pjh1@williams.edu}

\medskip

\noindent LIZA JACOBY, DEPARTMENT OF MATHEMATICS, WILLIAMS COLLEGE, WILLIAMSTOWN, MA 01267 \\
{\it E-mail address:} {\bf ljj1@williams.edu}}

\end{document}